\newtheorem{theorem}{Theorem}[section]
\newtheorem{lemma}[theorem]{Lemma}
\newtheorem{proposition}[theorem]{Proposition}
\newtheorem{corollary}[theorem]{Corollary}
\theoremstyle{definition}
\newtheorem{definition}[theorem]{Definition}
\newtheorem{remark}[theorem]{Remark}
\newtheorem{example}[theorem]{Example}
\newtheorem{problem}[theorem]{Problem}
\newtheorem*{acknowledgements}{Acknowledgements}
\renewcommand{\phi}{\varphi}
\newcommand{\Aut}{\operatorname{Aut}}
\newcommand{\Hom}{\operatorname{Hom}}
\newcommand{\Homeo}{\operatorname{Homeo}}
\renewcommand{\Im}{\operatorname{Im}}
\newcommand{\id}{\operatorname{id}}
\newcommand{\Ker}{\operatorname{Ker}}
\newcommand{\rank}{\operatorname{rank}}
\newcommand{\N}{\mathbb{N}}
\newcommand{\Z}{\mathbb{Z}}
\newcommand{\Q}{\mathbb{Q}}
\newcommand{\R}{\mathbb{R}}
\newcommand{\C}{\mathbb{C}}
\newcommand{\G}{\mathcal{G}}
\renewcommand{\H}{\mathcal{H}}
\renewcommand{\S}{\mathcal{S}}
\title{Classifying Stein's groups}
\author{Hiroki Matui 
\thanks{The author was supported by JSPS KAKENHI Grant Number 23K22397.}\\
Graduate School of Science \\
Chiba University \\
Inage-ku, Chiba 263-8522, Japan}
\date{}
\begin{document}
\maketitle

\begin{abstract}
In this paper, 
we provide a comprehensive classification of Stein's groups, 
which generalize the well-known Higman-Thompson groups. 
Stein's groups are defined as groups of piecewise linear bijections 
of an interval with finitely many breakpoints and slopes 
belonging to specified additive and multiplicative subgroups 
of the real numbers. 
Our main result establishes a classification theorem 
for these groups under the assumptions that 
the slope group is finitely generated and 
the additive group has rank at least 2. 
We achieve this by interpreting Stein's groups 
as topological full groups of ample groupoids. 
A central concept in our analysis is the notion of $H^1$-rigidity 
in the cohomology of groupoids. 
In the case where the rank of the additive group is 1, 
we adopt a different approach using attracting elements 
to impose strong constraints on the classification. 
\end{abstract}

\section{Introduction}

Stein's groups are a natural and profound generalization of 
Thompson's group $V$, 
which was first introduced by R. J. Thompson in the 1960s. 
Thompson's group $V$ is famously recognized as 
the first known example of an infinite, finitely presented simple group, 
a discovery that sparked significant interest 
in the study of groups with these unique properties. 
This work was extended by G. Higman \cite{MR0376874}, 
who generalized Thompson's group to an infinite family of groups 
now known as the Higman-Thompson groups $V_{n,r}$, 
which exhibit similar structural properties. 
Expanding on this foundation, 
M. Stein \cite{St92TAMS} introduced a broader class of groups, 
now referred to as Stein's groups. 
A Stein's group $V(\Gamma,\Lambda,\ell)$ is defined 
as a group of piecewise linear bijections of an interval $[0,\ell)$ 
with finitely many breakpoints in $\Gamma$ and slopes in $\Lambda$, 
where $\Gamma$ is an additive subgroup of real numbers and 
$\Lambda$ is a multiplicative subgroup of positive real numbers 
(see Definition \ref{Steingroup} for the precise definition). 
The Higman-Thompson group $V_{n,r}$ corresponds to 
choosing $\Lambda=\langle n\rangle$, $\Gamma=\Z[1/n]$ and $\ell=r$. 
Stein's groups generalize the Higman-Thompson group 
while retaining key properties like the simplicity of derived subgroups, 
and they provide a rich framework for studying algebraic features 
such as simplicity and finiteness, 
expanding the understanding of infinite groups. 
Nevertheless 
there has been no systematic study on the classification of them. 

In this paper, 
we provide a complete classification of Stein's groups 
under the assumptions that $\Lambda$ is finitely generated and 
$\Gamma$ has rank at least 2 
(Theorem \ref{classifygroupoid} and Theorem \ref{classifygroup}). 
The key method involves interpreting Stein's groups 
as topological full groups of ample groupoids. 
A central tool in this approach is the isomorphism theorem, 
which states that the isomorphism of topological full groups 
and ample groupoids are equivalent (Theorem \ref{isomorphism}). 
This theorem provides a powerful framework for our classification. 
This approach builds on O. Tanner's earlier work \cite{Ta2312arXiv}, 
in which he determined 
when the derived subgroup of Stein's group is finitely generated. 
In this paper, we designate as ``Stein groupoid'' 
the ample groupoid that realizes Stein's group 
as its topological full group (Definition \ref{Steingroupoid}). 
The classification of Stein's groups then reduces to 
classifying these Stein groupoids (Lemma \ref{groupVSgroupoid}). 
Under the given assumptions, it will be shown that 
the cohomology group $H^1$ of a Stein groupoid is naturally isomorphic 
to the $H^1$ of $\Gamma\rtimes\Lambda$ (Proposition \ref{Sisrigid}). 
In such a case, we say that 
the action $\Gamma\rtimes\Lambda\curvearrowright\R_\Gamma$ 
is $H^1$-rigid (Definition \ref{H1rigidity}). 
This result serves as a stepping stone 
towards determining the isomorphism classes of Stein groupoids, 
and by extension, the classification of Stein's groups. 
Although several results are known 
concerning the homology groups of Stein groupoids 
\cite{CPPR11JFA,Li15JFA,Ta2312arXiv}, 
this paper distinguishes itself by focusing on cohomology. 
It is worth mentioning that 
our $H^1$-rigidity implies continuous $\Z$-cocycle rigidity 
in the sense of X. Li \cite{Li18ETDS}. 
He obtained many interesting examples of 
continuous orbit equivalence rigidity 
by using the notion of continuous cocycle rigidity. 
Indeed our main result can be also regarded 
as a new continuous orbit equivalence rigidity result 
(Remark \ref{OErigidity}). 
Furthermore, 
it is important to note that when the rank of $\Gamma$ is $1$, 
the current method does not apply. 
This is because the action $\Gamma\rtimes\Lambda\curvearrowright\R_\Gamma$ 
fails to be $H^1$-rigid, 
requiring a different approach to address this case. 
Existing techniques do not seem to resolve this issue effectively. 
On the other hand, 
by utilizing the concept of attracting elements, 
we observe that for two Stein groupoids to be isomorphic, 
there must be order-preserving embeddings 
between the groups of slopes (Lemma \ref{orderpreserve}). 
This observation applies even in the case where the rank of $\Gamma$ is $1$ 
and provides strong constraints on when Stein groupoids, 
and consequently Stein's groups, can be isomorphic. 
Thus, this offers valuable insight 
into the classification of Stein groups, even in the rank $1$ case. 

This paper is organized as follows. 
In Section 2, we provide the necessary preliminaries 
on ample groupoids and topological full groups, 
establishing the foundation for the subsequent discussions. 
We also demonstrate that 
$H^1$-rigidity is preserved under semi-direct products by $\Z$. 
Section 3 introduces Stein groupoids and 
illustrates how Stein's groups can be viewed as 
topological full groups of them. 
Also, we observe that if two Stein groupoids are isomorphic, 
then their slope groups must be isomorphic, at least as abelian groups 
(Lemma \ref{isotropy}). 
In Section 4, we prove that 
the action $\Gamma\curvearrowright\R_\Gamma$ is $H^1$-rigid, 
provided the rank of $\Gamma$ is at least $2$. 
Section 5 builds on this result to classify Stein groupoids 
by identifying the natural homomorphism from a Stein groupoid 
to $\Lambda$ in terms of the cohomology of ample groupoids, 
using the $H^1$-rigidity of the $\Gamma\rtimes\Lambda$-action. 
In Section 6, 
we study the case where 
$\Gamma=\Z[1/n]$ and $\Lambda=\langle n\rangle$, 
denoted $\S_n:=\S(\Z[1/n],\langle n\rangle)$, 
which is not $H^1$-rigid. 
We also explore an interesting phenomenon 
where $\S_2$ can be embedded into another Stein groupoid, 
suggesting a potentially intriguing problem for further investigation. 
Finally, Section 7 discusses the use of attracting elements 
in providing strong constraints on when two Stein groupoids, 
and consequently Stein groups, can be isomorphic, 
even in the case of rank $1$. 

\begin{acknowledgements}
I would like to thank O. Tanner and X. Li 
for very helpful comments and discussions. 
This work was supported by 
the Research Institute for Mathematical Sciences, 
an International Joint Usage/Research Center located 
in Kyoto University. 
\end{acknowledgements}

\section{Preliminaries}

\subsection{Ample groupoids}

The cardinality of a set $A$ is written $\#A$ and 
the characteristic function of $A$ is written $1_A$. 
We say that a subset of a topological space is clopen 
if it is both closed and open. 
A topological space is said to be totally disconnected 
if its connected components are singletons. 
By a Cantor set, 
we mean a compact, metrizable, totally disconnected space 
with no isolated points. 
It is known that any two such spaces are homeomorphic. 

In this article, by an \'etale groupoid 
we mean a second countable locally compact Hausdorff groupoid 
such that the range map is a local homeomorphism. 
(We emphasize that our \'etale groupoids are always assumed to be Hausdorff, 
while non-Hausdorff groupoids are also studied actively.) 
We refer the reader to \cite{Re_text,R08Irish} 
for background material on \'etale groupoids. 
For an \'etale groupoid $\G$, 
we let $\G^{(0)}$ denote the unit space and 
let $s$ and $r$ denote the source and range maps, 
i.e.\ $s(g)=g^{-1}g$, $r(g)=gg^{-1}$. 
A subset $U\subset\G$ is called a bisection 
if $r|U$ and $s|U$ are injective. 
The \'etale groupoid $\G$ has a basis for its topology 
consisting of open bisections. 
For $x\in\G^{(0)}$, 
the set $r(s^{-1}(x))$ is called the orbit of $x$. 
When every orbit is dense in $\G^{(0)}$, $\G$ is said to be minimal. 
For a subset $Y\subset\G^{(0)}$, 
the reduction of $\G$ to $Y$ is $r^{-1}(Y)\cap s^{-1}(Y)$ and 
denoted by $\G|Y$. 
If $Y$ is open, then 
the reduction $\G|Y$ is an \'etale subgroupoid of $\G$ in an obvious way. 
An \'etale groupoid $\G$ is called ample 
if its unit space $\G^{(0)}$ is totally disconnected. 
An \'etale groupoid is ample if and only if 
it has a basis for its topology consisting of compact open bisections. 
A (non-zero) Borel measure $\mu$ on $\G^{(0)}$ is 
said to be $\G$-invariant 
if for every compact bisection $U\subset\G$, 
one has $\mu(r(U))=\mu(s(U))<\infty$. 

From a group action on a topological space, 
we can form a transformation groupoid. 
Let $\phi:\Gamma\curvearrowright X$ be an action of 
a countable discrete group $\Gamma$ 
on a locally compact Hausdorff space $X$. 
The transformation groupoid $\G:=X\rtimes_\phi\Gamma$ is 
$X\times\Gamma$ equipped with the product topology. 
The unit space of $\G$ is given by $\G^{(0)}=X\times\{1\}$ 
(where $1$ is the identity of $\Gamma$), 
with range and source maps 
$r(x,\gamma)=(x,1)$ and $s(x,\gamma)=(\phi_\gamma^{-1}(x),1)$. 
The unit space $\G^{(0)}$ is often identified with $X$. 
Multiplication is given by 
$(x,\gamma)\cdot(x',\gamma')=(x,\gamma\gamma')$, 
and the inverse of $(x,\gamma)$ is $(\phi_\gamma^{-1}(x),\gamma^{-1})$. 
Such a transformation groupoid is always \'etale. 
It is ample if and only if $X$ is totally disconnected. 
Moreover, $\G$ is minimal if and only if $\phi$ is minimal 
i.e.\ for all $x\in X$, 
the orbit $\{\phi_\gamma(x)\mid\gamma\in\Gamma\}$ is dense in $X$. 
A Borel measure $\mu$ on $\G^{(0)}$ is $\G$-invariant 
if and only if $\mu$ is $\Gamma$-invariant 
under the identification of $\G^{(0)}$ and $X$.

\subsection{Topological full groups}

In this subsection, 
we recall the notion of topological full groups. 
For $x\in\G^{(0)}$, 
we write $\G_x=r^{-1}(x)\cap s^{-1}(x)$ and call it the isotropy group of $x$. 
The isotropy bundle of $\G$ is 
$\G'=\{g\in\G\mid r(g)=s(g)\}=\bigcup_{x\in\G^{(0)}}\G_x$. 
We say that $\G$ is essentially principal 
if the interior of $\G'$ is $\G^{(0)}$. 

\begin{definition}[{\cite[Definition 2.3]{Ma12PLMS}}]
Let $\G$ be an essentially principal ample groupoid 
whose unit space $\G^{(0)}$ is compact. 
The set of all $\alpha\in\Homeo(\G^{(0)})$, 
for which there exists a compact open bisection $U\subset\G$ 
such that $r(U)=\G^{(0)}=s(U)$ and 
$r(g)=\alpha(s(g))$ holds for all $g\in U$, 
is called the topological full group of $\G$ and is denoted by $[[\G]]$. 
\end{definition}

For $\alpha\in[[\G]]$ the compact open bisection $U$ as above uniquely exists, 
because $\G$ is essentially principal. 
Obviously $[[\G]]$ is a subgroup of $\Homeo(\G^{(0)})$. 
Since $\G$ is second countable, it has countably many compact open subsets, 
and so $[[\G]]$ is at most countable. 
A homeomorphism $\alpha:\G^{(0)}\to\G^{(0)}$ belongs to $[[\G]]$ 
if and only if for any $x\in\G^{(0)}$ 
there exists a compact open bisection $V$ 
such that $x$ is in $s(V)$ and 
$\alpha$ equals $r\circ(s|V)^{-1}$ on a neighborhood of $x$. 
Thus, $\alpha$ is in $[[\G]]$ if and only if 
the `graph' of $\alpha$ is a clopen subset of $\G$. 

The following theorem says that 
the topological full group $[[\G]]$ remembers the ample groupoid $\G$. 
For a group $G$, its derived subgroup is written $D(G)$. 

\begin{theorem}
[{\cite[Theorem 0.2]{Ru89TAMS}, \cite[Theorem 3.10]{Ma15crelle}}]
\label{isomorphism}
For $i=1,2$, let $\G_i$ be an essentially principal ample groupoid 
whose unit space is compact. 
Suppose that $\G_i$ is minimal. 
The following conditions are equivalent. 
\begin{enumerate}
\item $\G_1$ and $\G_2$ are isomorphic as \'etale groupoids. 
\item $[[\G_1]]$ and $[[\G_2]]$ are isomorphic as discrete groups. 
\item $D([[\G_1]])$ and $D([[\G_2]])$ are isomorphic as discrete groups. 
\end{enumerate}
\end{theorem}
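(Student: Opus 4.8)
The three conditions will be shown equivalent through the cycle of implications, with essentially all of the work concentrated in deducing (1) from (3).

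The implication (1) $\Rightarrow$ (2) is a routine transport of structure: an isomorphism of \'etale groupoids $\Phi\colon\G_1\to\G_2$ restricts to a homeomorphism $\phi:=\Phi|_{\G_1^{(0)}}\colon\G_1^{(0)}\to\G_2^{(0)}$ of unit spaces, and since $\Phi$ carries compact open bisections to compact open bisections and intertwines the range and source maps, conjugation $\alpha\mapsto\phi\circ\alpha\circ\phi^{-1}$ defines an isomorphism $[[\G_1]]\to[[\G_2]]$. The implication (2) $\Rightarrow$ (3) is immediate, because the derived subgroup is characteristic, so any isomorphism $[[\G_1]]\to[[\G_2]]$ restricts to an isomorphism $D([[\G_1]])\to D([[\G_2]])$.

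The content of the theorem is (3) $\Rightarrow$ (1), which I would obtain from Rubin's reconstruction theorem by first realizing the abstract isomorphism spatially and then upgrading the resulting homeomorphism to a groupoid isomorphism. For the first step I would verify that the action $D([[\G_i]])\curvearrowright\G_i^{(0)}$ satisfies the flexibility hypothesis of Rubin's theorem: for each $x$ and each clopen neighborhood $U$ of $x$, the orbit of $x$ under the elements of $D([[\G_i]])$ supported in $U$ is somewhere dense in $U$. The totally disconnected unit space supplies a basis of clopen sets, minimality of $\G_i$ makes the $\G_i$-orbit of $x$ dense, and each segment of this orbit contained in $U$ is realized by a compact open bisection inside $\G_i|U$; the standard commutator construction then produces elements of the derived subgroup, supported in $U$, that move $x$ along a dense subset of $U$. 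Granting this, Rubin's theorem yields a homeomorphism $\psi\colon\G_1^{(0)}\to\G_2^{(0)}$ with $\Phi(\alpha)=\psi\circ\alpha\circ\psi^{-1}$ for all $\alpha\in D([[\G_1]])$.

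It remains to recover the groupoid structure, and this is where essential principality is decisive. Because $\G_i$ is essentially principal and ample, it is naturally isomorphic to the groupoid of germs of the action of $[[\G_i]]$ on $\G_i^{(0)}$: every $g\in\G_i$ lies in a compact open bisection realized by a full-group element, and two such elements determine the same point of $\G_i$ exactly when they have the same germ, the injectivity here resting on the interior of $\G_i'$ being precisely $\G_i^{(0)}$. I would then send the germ at $x$ of $\alpha\in D([[\G_1]])$ to the germ at $\psi(x)$ of $\Phi(\alpha)$; the relation $\Phi(\alpha)=\psi\circ\alpha\circ\psi^{-1}$ makes this well defined on germs and compatible with the range, source, and multiplication, hence a groupoid isomorphism $\G_1\to\G_2$. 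The main obstacle, as I see it, is twofold and lies entirely in this direction: extracting Rubin's flexibility hypothesis from minimality and essential principality while restricting to commutators supported in arbitrarily small clopen sets, and confirming that the germs of $D([[\G_i]])$ alone already recover all of $\G_i$ --- including the isotropy elements, where one must exploit that the interior of $\G_i'$ reduces to the units --- rather than only a proper subgroupoid.
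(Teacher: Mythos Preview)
The paper does not supply its own proof of this theorem: it is stated with attributions to \cite[Theorem 0.2]{Ru89TAMS} and \cite[Theorem 3.10]{Ma15crelle} and no proof environment follows. So there is nothing in the paper to compare against directly.

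That said, your sketch is faithful to the strategy in the cited sources. Rubin's theorem supplies the spatial realization, and Matui's argument in \cite{Ma15crelle} identifies an essentially principal minimal ample groupoid with the groupoid of germs of its topological full group (indeed of the derived subgroup), which is exactly the mechanism you describe. Your closing caveat is accurate: the two genuinely nontrivial ingredients are (i) verifying the local movability hypothesis of Rubin's theorem for $D([[\G_i]])$ using minimality and the commutator trick, and (ii) showing that the germ groupoid of $D([[\G_i]])$ coincides with $\G_i$, which requires essential principality to kill spurious isotropy and minimality to ensure enough bisections. Both points are handled in \cite{Ma15crelle}; a fully self-contained write-up would need to reproduce those arguments rather than merely gesture at them, but as an outline your proposal is correct and matches the literature.
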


\subsection{Homology groups and cohomology groups}

Let $A$ be a topological abelian group. 
For a locally compact Hausdorff space $X$, 
let $C(X,A)$ be the set of $A$-valued continuous functions. 
We denote by $C_c(X,A)\subset C(X,A)$ 
the subset consisting of functions with compact support. 
With pointwise addition, 
$C(X,A)$ and $C_c(X,A)$ are abelian groups. 

Let $\pi:X\to Y$ be a local homeomorphism 
between locally compact Hausdorff spaces. 
For $f\in C_c(X,A)$, we define a map $\pi_*(f):Y\to A$ by 
\[
\pi_*(f)(y):=\sum_{\pi(x)=y}f(x). 
\]
It is not so hard to see that $\pi_*(f)$ belongs to $C_c(Y,A)$ and 
that $\pi_*$ is a homomorphism from $C_c(X,A)$ to $C_c(Y,A)$. 
Besides, if $\pi':Y\to Z$ is another local homeomorphism to 
a locally compact Hausdorff space $Z$, then 
one can check $(\pi'\circ\pi)_*=\pi'_*\circ\pi_*$ in a direct way. 
Thus, $C_c(\cdot,A)$ is a covariant functor 
from the category of locally compact Hausdorff spaces 
with local homeomorphisms 
to the category of abelian groups with homomorphisms. 

Let $\G$ be an ample groupoid. 
For $n\in\N$, we write $\G^{(n)}$ 
for the space of composable strings of $n$ elements in $\G$, that is, 
\[
\G^{(n)}:=\{(g_1,g_2,\dots,g_n)\in\G^n\mid
s(g_i)=r(g_{i+1})\text{ for all }i=1,2,\dots,n{-}1\}. 
\]
For $n\geq2$ and $i=0,1,\dots,n$, 
we let $d_i^{(n)}:\G^{(n)}\to\G^{(n-1)}$ be a map defined by 
\[
d_i^{(n)}(g_1,g_2,\dots,g_n):=\begin{cases}
(g_2,g_3,\dots,g_n) & i=0 \\
(g_1,\dots,g_ig_{i+1},\dots,g_n) & 1\leq i\leq n{-}1 \\
(g_1,g_2,\dots,g_{n-1}) & i=n. 
\end{cases}
\]
When $n=1$, we let $d_0^{(1)},d_1^{(1)}:\G^{(1)}\to\G^{(0)}$ be 
the source map and the range map, respectively. 
Clearly the maps $d_i^{(n)}$ are local homeomorphisms. 
Define the homomorphisms $\partial_n:C_c(\G^{(n)},A)\to C_c(\G^{(n-1)},A)$ 
by 
\[
\partial_n:=\sum_{i=0}^n(-1)^id^{(n)}_{i*}. 
\]
The abelian groups $C_c(\G^{(n)},A)$ 
together with the boundary operators $\partial_n$ form a chain complex. 
When $n=0$, we let $\partial_0:C_c(\G^{(0)},A)\to0$ be the zero map. 

\begin{definition}
[{\cite[Section 3.1]{CM00crelle}, \cite[Definition 3.1]{Ma12PLMS}}]
\label{homology}
For $n\geq0$, we let $H_n(\G,A)$ be the homology groups of 
the chain complex above, 
i.e.\ $H_n(\G,A):=\Ker\partial_n/\Im\partial_{n+1}$, 
and call them the homology groups of $\G$ with constant coefficients $A$. 
When $A=\Z$, we simply write $H_n(\G):=H_n(\G,\Z)$. 
In addition, we call elements of $\Ker\partial_n$ cycles and 
elements of $\Im\partial_{n+1}$ boundaries. 
For a cycle $f\in\Ker\partial_n$, 
its equivalence class in $H_n(\G,A)$ is denoted by $[f]$. 
\end{definition}

Next, we introduce cohomology groups of $\G$. 
For $n\geq0$, 
we define the homomorphisms $\delta^n:C(\G^{(n)},A)\to C(\G^{(n+1)},A)$ by
\[
\delta^n(\xi):=\sum_{i=0}^{n+1}(-1)^i(\xi\circ d^{(n+1)}_i)
\]
for $\xi\in C(\G^{(n)},A)$. 
We let $\delta^{-1}:0\to C(\G^{(0)},A)$ be the zero map. 
The abelian groups $C(\G^{(n)},A)$ 
together with the coboundary operators $\delta^n$ form a cochain complex. 

\begin{definition}[{\cite{Re_text}}]\label{cohomology}
For $n\geq0$, we let $H^n(\G,A)$ be the cohomology groups of 
the cochain complex above, 
i.e.\ $H^n(\G,A):=\Ker\delta^n/\Im\delta^{n-1}$, 
and call them the cohomology groups of $\G$ with constant coefficients $A$. 
When $A=\Z$, we simply write $H^n(\G):=H^n(\G,\Z)$. 
In addition, we call elements of $\Ker\delta^n$ cocycles and 
elements of $\Im\delta^{n-1}$ coboundaries. 
For a cocycle $\xi\in\Ker\delta^n$, 
its equivalence class in $H^n(\G,A)$ is denoted by $[\xi]$. 
\end{definition}

Let $\pi:\G\to\H$ be a homomorphism between ample groupoids. 
We let $\pi^{(0)}:\G^{(0)}\to\H^{(0)}$ denote 
the restriction of $\pi$ to $\G^{(0)}$, 
and $\pi^{(n)}:\G^{(n)}\to\H^{(n)}$ denote 
the restriction of the $n$-fold product $\pi\times\pi\times\dots\times\pi$ 
to $\G^{(n)}$. 
It is easy to see that 
$\pi^{(n)}$ commute with the maps $d^{(n)}_i$. 
Hence we obtain homomorphisms 
\[
H^n(\pi):H^n(\H,A)\to H^n(\G,A). 
\]
Moreover, 
when the homomorphism $\pi$ is a local homeomorphism 
(i.e.\ an \'etale map), 
we can consider homomorphisms 
$\pi^{(n)}_*:C_c(\G^{(n)},A)\to C_c(\H^{(n)},A)$. 
Since they commute with the boundary operators $\partial_n$, 
we obtain homomorphisms 
\[
H_n(\pi):H_n(\G,A)\to H_n(\H,A). 
\]
In \cite{MM2411arXiv}, the cup products 
\[
\smile\ :H^n(\G,\Z)\times H^m(\G,A)\to H^{n+m}(\G,A)
\]
and the cap products 
\[
\frown\ :H_n(\G,\Z)\times H^m(\G,A)\to H_{n-m}(\G,A)
\]
have been introduced for ample groupoids. 
For later use, 
we recall the following proposition from \cite{MM2411arXiv}. 

\begin{proposition}
[{\cite[Proposition 3.2]{MM2411arXiv}}]\label{capproduct}
Let $\pi:\G\to\H$ be a homomorphism between ample groupoids. 
Suppose that $\pi$ is a local homeomorphism. 
Let $m,n\in\N$ be such that $m\leq n$. 
Then, for any $[f]\in H_n(\G,\Z)$ and $[\xi]\in H^m(\H,A)$, 
one has 
\[
H_n(\pi)([f])\frown[\xi]
=H_{n-m}(\pi)\left([f]\frown H^m(\pi)([\xi])\right). 
\]
\end{proposition}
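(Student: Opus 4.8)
The plan is to prove the identity at the level of chains and cochains, where both sides are represented by genuinely equal elements of $C_c(\H^{(n-m)},A)$; the statement for homology and cohomology classes then follows immediately. First I would recall the chain-level description of the cap product. For $m\leq n$ let $p\colon\G^{(n)}\to\G^{(m)}$ and $q\colon\G^{(n)}\to\G^{(n-m)}$ be the front-face and back-face maps, $p(g_1,\dots,g_n)=(g_1,\dots,g_m)$ and $q(g_1,\dots,g_n)=(g_{m+1},\dots,g_n)$. Both are local homeomorphisms, being iterated compositions of the maps $d^{(\cdot)}_i$. For a chain $f\in C_c(\G^{(n)},\Z)$ and a cochain $\xi\in C(\G^{(m)},A)$ the cap product is represented by
\[
f\frown\xi=q_*\bigl((\xi\circ p)\cdot f\bigr)\in C_c(\G^{(n-m)},A),
\]
where $(\xi\circ p)\cdot f$ denotes the pointwise product of the $A$-valued function $\xi\circ p$ with the $\Z$-valued compactly supported function $f$. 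I write $p^\G,q^\G$ and $p^\H,q^\H$ for the corresponding maps on $\G$ and $\H$.

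The two ingredients I would isolate are as follows. Since the face maps merely select coordinates while $\pi^{(n)}$ applies $\pi$ coordinate-wise, the squares
\[
p^\H\circ\pi^{(n)}=\pi^{(m)}\circ p^\G,\qquad q^\H\circ\pi^{(n)}=\pi^{(n-m)}\circ q^\G
\]
commute, which is immediate from the definitions. Secondly, for any local homeomorphism $\psi\colon X\to Y$, any $g\in C_c(X,\Z)$ and any $\eta\in C(Y,A)$ one has the projection formula $\psi_*\bigl((\eta\circ\psi)\cdot g\bigr)=\eta\cdot\psi_*(g)$, proved by the one-line computation $\sum_{\psi(x)=y}\eta(\psi(x))g(x)=\eta(y)\sum_{\psi(x)=y}g(x)$.

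With these in hand the computation is short. Starting from the right-hand side, I would write $H^m(\pi)([\xi])=[\xi\circ\pi^{(m)}]$, so $[f]\frown H^m(\pi)([\xi])$ is represented by $q^\G_*\bigl((\xi\circ\pi^{(m)}\circ p^\G)\cdot f\bigr)$, and applying $H_{n-m}(\pi)$ yields the representative $\pi^{(n-m)}_*q^\G_*\bigl((\xi\circ\pi^{(m)}\circ p^\G)\cdot f\bigr)$. By functoriality of the pushforward together with the second commuting square, $\pi^{(n-m)}_*\circ q^\G_*=(\pi^{(n-m)}\circ q^\G)_*=(q^\H\circ\pi^{(n)})_*=q^\H_*\circ\pi^{(n)}_*$, so this equals $q^\H_*\pi^{(n)}_*\bigl((\xi\circ\pi^{(m)}\circ p^\G)\cdot f\bigr)$. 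Using the first commuting square, $\xi\circ\pi^{(m)}\circ p^\G=(\xi\circ p^\H)\circ\pi^{(n)}$, and the projection formula with $\psi=\pi^{(n)}$, $g=f$, $\eta=\xi\circ p^\H$ turns the inner term $\pi^{(n)}_*\bigl(((\xi\circ p^\H)\circ\pi^{(n)})\cdot f\bigr)$ into $(\xi\circ p^\H)\cdot\pi^{(n)}_*(f)$. Hence the whole expression becomes $q^\H_*\bigl((\xi\circ p^\H)\cdot\pi^{(n)}_*(f)\bigr)$, which is exactly the representative of $\pi^{(n)}_*(f)\frown\xi=H_n(\pi)([f])\frown[\xi]$, the left-hand side.

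The argument requires no cycle or cocycle hypothesis on $f$ or $\xi$: it is a genuine identity of chains, insensitive to the choice of representatives, and so descends directly to the claimed equality of homology classes. Consequently I expect no obstacle of a conceptual nature; the only points demanding care are (i) matching the precise chain-level formula and face convention for $\frown$ used in \cite{MM2411arXiv}, since the convention for which face carries the cochain fixes the exact form of $p$ and $q$ (though the two commuting squares and the projection formula hold in either convention), and (ii) checking the support and continuity bookkeeping so that all pointwise products lie in the appropriate $C_c$-groups and all pushforwards are defined. If the reference instead defines $\frown$ through an Alexander--Whitney-type diagonal rather than the explicit front/back-face formula, I would first reconcile the two descriptions and then run the same projection-formula argument.
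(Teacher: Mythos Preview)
Your chain-level argument is correct: the two commuting squares and the projection formula combine exactly as you describe to give an equality already in $C_c(\H^{(n-m)},A)$, and your caveats about matching the front/back-face convention of \cite{MM2411arXiv} are the only points requiring attention. Note, however, that the present paper does not itself prove this proposition---it merely quotes it from \cite[Proposition~3.2]{MM2411arXiv}---so there is no in-paper proof to compare against; your argument is the standard one and is presumably close to what appears in that reference.
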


\subsection{Semi-direct products and skew products}

We recall from \cite{Re_text} 
the notion of semi-direct products and skew products of ample groupoids. 

Let $\G$ be an ample groupoid and let $\Gamma$ be a countable discrete group. 
When $\phi:\Gamma\curvearrowright\G$ is an action of $\Gamma$ on $\G$, 
the semi-direct product $\G\rtimes_\phi\Gamma$ is $\G\times\Gamma$ 
equipped with the product topology. 
The unit space of $\G\rtimes_\phi\Gamma$ is given by $\G^{(0)}\times\{1\}$ 
(where $1$ is the identity of $\Gamma$), 
with range and source maps 
$r(g,\gamma)=(r(g),1)$ and $s(g,\gamma)=(\phi_\gamma^{-1}(s(g)),1)$. 
Multiplication is given by 
$(g,\gamma)\cdot(g',\gamma')=(g\phi_\gamma(g'),\gamma\gamma')$, 
and $(g,\gamma)^{-1}=(\phi_\gamma^{-1}(g^{-1}),\gamma^{-1})$. 
There exists a natural homomorphism 
$\pi:\G\rtimes_\phi\Gamma\to\Gamma$ defined by $\pi(g,\gamma):=\gamma$. 
A transformation groupoid $X\rtimes_\phi\Gamma$ is 
an example of semi-direct products. 

When $\xi:\G\to\Gamma$ is a homomorphism, 
the skew product $\G\times_\xi\Gamma$ is $\G\times\Gamma$ 
equipped with the product topology. 
The unit space of $\G\times_\xi\Gamma$ is given by $\G^{(0)}\times\Gamma$, 
with range and source maps 
$r(g,\gamma)=(r(g),\gamma)$ and $s(g,\gamma)=(s(g),\gamma\xi(g))$. 
Multiplication is given by 
$(g,\gamma)\cdot(g',\gamma\xi(g))=(gg',\gamma)$, 
and $(g,\gamma)^{-1}=(g^{-1},\gamma\xi(g))$. 
We can define an action of $\Gamma$ on $\G\times_\xi\Gamma$ 
by $\gamma\cdot(g',\gamma'):=(g',\gamma\gamma')$. 

\begin{proposition}
Let $\G$ be an ample groupoid and 
let $\xi:\G\to\Z$ be a homomorphism. 
Let $\hat\xi\in\Aut(\G\times_\xi\Z)$ be an automorphism 
on the skew product $\G\times_\xi\Z$ 
defined by $\hat\xi(g,i):=(g,i+1)$. 
Then, there exists a long exact sequence 
\[
\xymatrix@M=8pt{
0  \ar[r] & H^0(\G) \ar[r] & 
H^0(\G\times_\xi\Z) \ar[r]^{\id-H^0(\hat\xi)} & 
H^0(\G\times_\xi\Z) \ar[r] & 
H^1(\G) \ar[r] & \cdots \\
\cdots \ar[r] & 
H^n(\G) \ar[r] & 
H^n(\G\times_\xi\Z) \ar[r]^{\id-H^n(\hat\xi)} & 
H^n(\G\times_\xi\Z) \ar[r] & 
H^{n+1}(\G) \ar[r] & \cdots . 
}
\]
\end{proposition}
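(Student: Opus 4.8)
The plan is to exhibit the asserted long exact sequence as the one associated, via the standard zig-zag (snake) lemma, to a short exact sequence of cochain complexes; this is the ample-groupoid analogue of a Wang sequence. I will prove it for coefficients in an arbitrary topological abelian group $A$, so that the stated case is $A=\Z$. Write $\H:=\G\times_\xi\Z$ for the skew product and let $p:\H\to\G$, $p(g,i):=g$, be the canonical projection. A direct check of the multiplication and of the source and range maps shows that $p$ is a homomorphism of ample groupoids, and since $p^{(0)}:\G^{(0)}\times\Z\to\G^{(0)}$ is the coordinate projection, $p$ is a local homeomorphism; in particular it induces $H^n(p):H^n(\G,A)\to H^n(\H,A)$. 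The first step is to identify the cochain groups. Unwinding the source map of $\H$, a composable string in $\H^{(n)}$ is uniquely determined by a composable string $(g_1,\dots,g_n)\in\G^{(n)}$ together with the single integer attached to $g_1$ (the remaining integers being forced by $i_{j+1}=i_j+\xi(g_j)$). This gives a homeomorphism $\H^{(n)}\cong\G^{(n)}\times\Z$ under which $p^{(n)}$ becomes the projection onto $\G^{(n)}$ and $\hat\xi^{(n)}$ becomes the shift $(\vec g,i)\mapsto(\vec g,i+1)$. Because $\Z$ is discrete and cohomology uses \emph{all} continuous functions, $C(\H^{(n)},A)\cong\prod_{i\in\Z}C(\G^{(n)},A)$.

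Next I would build the short exact sequence. Writing an element of $C(\H^{(n)},A)$ as a family $(f_i)_{i\in\Z}$ with $f_i=f(\cdot,i)\in C(\G^{(n)},A)$, the pullback $(p^{(n)})^*$ is the diagonal embedding $\phi\mapsto(\phi)_{i\in\Z}$, while $(\hat\xi^{(n)})^*$ is the shift $(f_i)_i\mapsto(f_{i+1})_i$. Hence $\id-(\hat\xi^{(n)})^*$ sends $(f_i)_i$ to $(f_i-f_{i+1})_i$. Its kernel consists exactly of the constant families, i.e.\ the image of the (injective) map $(p^{(n)})^*$; and it is surjective, since given any $(h_i)_i$ one solves the telescoping recursion $f_{i+1}=f_i-h_i$ with $f_0$ arbitrary. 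Since $p$ and $\hat\xi$ are groupoid homomorphisms, $p^{(n)}$ and $\hat\xi^{(n)}$ commute with the face maps $d^{(n)}_i$, so $(p^{(n)})^*$ and $(\hat\xi^{(n)})^*$ commute with the coboundary operators $\delta^n$. Consequently
\[
0\longrightarrow C(\G^{(\bullet)},A)\xrightarrow{(p^{(\bullet)})^*}C(\H^{(\bullet)},A)\xrightarrow{\id-(\hat\xi^{(\bullet)})^*}C(\H^{(\bullet)},A)\longrightarrow 0
\]
is a short exact sequence of cochain complexes. The induced long exact sequence in cohomology is then exactly the one in the statement: the left map $H^n(\G,A)\to H^n(\H,A)$ is $H^n(p)$, the map between the two copies of $H^n(\H,A)$ is induced by $\id-(\hat\xi^{(\bullet)})^*$ and hence equals $\id-H^n(\hat\xi)$, the connecting homomorphism runs $H^n(\H,A)\to H^{n+1}(\G,A)$, and exactness at the left end reflects the injectivity of $(p^{(\bullet)})^*$.

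The only genuinely delicate point is the surjectivity of $\id-(\hat\xi^{(\bullet)})^*$ at the cochain level, and it is worth stressing where cohomology is essential: it holds precisely because $C(\H^{(n)},A)$ is the \emph{full} product $\prod_{i\in\Z}C(\G^{(n)},A)$, so the telescoping recursion can be solved with no finiteness constraint. The analogous map in homology involves $C_c$, hence the direct sum $\bigoplus_{i\in\Z}$, for which the corresponding map is not surjective; there one instead obtains the dual, Pimsner--Voiculescu-type sequence with the arrows reversed. Beyond this, the remaining work is routine: checking that the identification $\H^{(n)}\cong\G^{(n)}\times\Z$ is a homeomorphism, confirming from the explicit formulas that $p$ and $\hat\xi$ are homomorphisms (so that the two pullbacks commute with $\delta$), and the bookkeeping needed to verify that the zig-zag connecting map has the asserted domain and codomain.
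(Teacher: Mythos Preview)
Your proof is correct and follows essentially the same approach as the paper: both construct the short exact sequence of cochain complexes $0\to C^\bullet(\G,A)\to C^\bullet(\H,A)\xrightarrow{\id-\hat\xi^*}C^\bullet(\H,A)\to 0$ and invoke the associated long exact sequence. The paper merely cites \cite[Lemma 1.3]{Or20JNG} for the exactness, whereas you spell out the identification $\H^{(n)}\cong\G^{(n)}\times\Z$, the product decomposition $C(\H^{(n)},A)\cong\prod_{i\in\Z}C(\G^{(n)},A)$, and the telescoping argument for surjectivity; your remark on why the full product (as opposed to the direct sum in homology) is what makes surjectivity work is a helpful addition.
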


\begin{proof}
Let $C^\bullet(\cdot,\Z)$ denote the cochain complex 
defining cohomology groups of groupoids 
(Definition \ref{cohomology}). 
In the same way as the proof of \cite[Lemma 1.3]{Or20JNG}, 
\[
\xymatrix@M=8pt{
0 \ar[r] & C^\bullet(\G,\Z) \ar[r] & 
C^\bullet(\G\times_\xi\Z,\Z) \ar[r]^{\id-\hat\xi} &
C^\bullet(\G\times_\xi\Z,\Z) \ar[r] & 0
}
\]
is a short exact sequence of cochain complexes, 
which yields the desired long exact sequence. 
\end{proof}

\begin{corollary}\label{LES}
Let $\H$ be an ample groupoid 
and let $\theta:\H\to\H$ be an automorphism. 
Then, there exists a long exact sequence 
\[
\xymatrix@M=8pt{
0  \ar[r] & H^0(\H\rtimes_\theta\Z) \ar[r] & 
H^0(\H) \ar[r]^{\id-H^0(\theta)} & 
H^0(\H) \ar[r] & 
H^1(\H\rtimes_\theta\Z) \ar[r] & \cdots \\
\cdots \ar[r] & 
H^n(\H\rtimes_\theta\Z) \ar[r] & 
H^n(\H) \ar[r]^{\id-H^n(\theta)} & 
H^n(\H) \ar[r] & 
H^{n+1}(\H\rtimes_\theta\Z) \ar[r] & \cdots 
}
\]
where $\H\rtimes_\theta\Z$ is the semi-direct product. 
\end{corollary}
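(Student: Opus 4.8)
The plan is to deduce this from the preceding Proposition applied to the ample groupoid $\G:=\H\rtimes_\theta\Z$ together with its canonical homomorphism $\pi:\G\to\Z$, $\pi(h,m):=m$. With this choice the Proposition immediately produces a long exact sequence whose outer terms are $H^n(\H\rtimes_\theta\Z)$, exactly as required; the entire task is therefore to identify the middle terms $H^n(\G\times_\pi\Z)$ with $H^n(\H)$, and the shift automorphism $\hat\pi$ with $\theta$, at the level of cohomology.

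First I would write down an explicit isomorphism of \'etale groupoids. Let $P$ denote the pair groupoid on $\Z$, whose unit space is $\Z$ and which has a unique arrow from $b$ to $a$ for each ordered pair of units, denoted $(a,b)$. A direct computation with the source, range and multiplication formulas for skew products and semi-direct products shows that $\Phi((h,m),k):=(\theta^k(h),(k,k+m))$ defines an isomorphism $(\H\rtimes_\theta\Z)\times_\pi\Z\xrightarrow{\ \cong\ }\H\times P$. The verification is routine: one checks that $\Phi$ is a homeomorphism, carries units to units via $(u,k)\mapsto(\theta^k(u),k)$, and respects multiplication, the composability relation $r(h')=\theta^{-m}(s(h))$ of the skew product being exactly what makes the $\H$-coordinates $\theta^k(h)$ and $\theta^{k+m}(h')$ composable in $\H$. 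Transporting the shift $\hat\pi((h,m),k)=((h,m),k+1)$ through $\Phi$ produces the automorphism $\theta\times\sigma$ of $\H\times P$, where $\sigma$ is the translation $\sigma(a,b)=(a+1,b+1)$ of $P$.

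It then remains to compute $H^n(\H\times P)$ and the map induced by $\theta\times\sigma$. Here I would use that $P$ is cohomologically trivial: $\H\times\{0\}$ is a full clopen reduction of $\H\times P$ isomorphic to $\H$, giving $H^n(\H\times P)\cong H^n(\H)$, while the translation $\sigma$ is implemented by the global bisection $\{(b+1,b):b\in\Z\}$ and so acts as the identity on $H^n(P)$. Consequently $\theta\times\sigma$ induces $H^n(\theta)$ under the identification $H^n(\H\times P)\cong H^n(\H)$. Substituting $H^n(\G\times_\pi\Z)\cong H^n(\H)$ and $H^n(\hat\pi)=H^n(\theta)$ into the long exact sequence furnished by the Proposition yields precisely the asserted sequence, with matching degrees and signs.

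I expect this last step to be the genuine obstacle. Identifying $H^n(\H\times P)$ with $H^n(\H)$ is an instance of invariance of groupoid cohomology under passage to a full clopen reduction (equivalently, a K\"unneth-type statement exploiting the contractibility of $P$), and showing that $\sigma$ acts trivially is the statement that inner automorphisms act trivially on cohomology; neither is recorded in the present excerpt, so the careful point is to supply these invariance statements. As an alternative route that bundles the same difficulty differently, one could imitate the proof of the Proposition directly and construct a short exact sequence of cochain complexes of the shape $0\to C^\bullet(\H\rtimes_\theta\Z,\Z)\to C^\bullet(\H\times P,\Z)\xrightarrow{\id-H^\bullet(\hat\pi)}C^\bullet(\H\times P,\Z)\to0$, and then pass to the associated long exact sequence after the same cohomological simplification of the $P$-factor.
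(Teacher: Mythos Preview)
Your approach is the paper's: apply the Proposition to $\G=\H\rtimes_\theta\Z$ with the canonical cocycle, identify $H^n(\G\times_\pi\Z)\cong H^n(\H)$ via the full clopen reduction at level $0$, and match $\hat\pi$ with $\theta$. The paper does this more tersely---observing directly that the reduction of the skew product to $\H^{(0)}\times\{0\}$ is $\H$ and asserting the intertwining---so your detour through $\H\times P$ and the factorization $\theta\times\sigma$ simply unpacks the same argument, and the invariance-under-full-reduction step you flag as delicate is likewise used without proof there.
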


\begin{proof}
Define a homomorphism $\xi:\H\rtimes_\theta\Z\to\Z$ 
by $\xi(g,k):=k$. 
We apply the proposition above to $\H\rtimes_\theta\Z$ and $\xi$. 
For any $x\in\H^{(0)}$ and $k\in\Z$, 
$r(x,k,0)=(x,0,0)$ and $s(x,k,0)=(\theta^{-k}(x),0,k)$. 
Hence $\H^{(0)}\times\{0\}\times\{0\}$ is full 
in $(\H\rtimes_\theta\Z)\times_\xi\Z$. 
Since $\H$ is isomorphic to 
the reduction of $(\H\rtimes_\theta\Z)\times_\xi\Z$ 
to $\H^{(0)}\times\{0\}\times\{0\}$, 
one has $H^*((\H\rtimes_\theta\Z)\times_\xi\Z)\cong H^*(\H)$. 
Moreover, 
this isomorphism intertwines $H^*(\hat\xi)$ and $H^*(\theta)$, 
and so the conclusion follows. 
\end{proof}

The following definition plays a central role 
in the later sections. 

\begin{definition}\label{H1rigidity}
Let $G\curvearrowright X$ be an action of 
a countable discrete group $G$ 
on a totally disconnected locally compact Hausdorff space $X$. 
We say that $G\curvearrowright X$ is $H^1$-rigid 
if the natural homomorphism $\pi:X\rtimes G\to G$ induces 
an isomorphism between $H^1(X\rtimes G)$ and $H^1(G)=\Hom(G,\Z)$. 
\end{definition}

\begin{remark}
In the setting of measurable dynamical systems, 
such a rigidity property is called cocycle superrigidity 
(see \cite{MR776417,Po07Invent} for instance), 
and it has led a lot of orbit equivalence rigidity results. 
In the setting of topological dynamical systems, 
the notion of continuous cocycle rigidity was introduced 
by Li in \cite[Definition 4.3]{Li18ETDS}, and 
several examples for continuous orbit equivalence rigidity were obtained. 
A topological dynamical system $G\curvearrowright X$ is said to be 
continuous $H$-cocycle rigid 
if for every continuous $H$-cocycle $\xi:X\rtimes G\to H$, 
there exists a group homomorphism $\zeta:G\to H$ 
such that $\xi$ is cohomologous to $\zeta\circ\pi$, 
where $H$ is a discrete (possibly non-abelian) group. 
Thus, $G\curvearrowright X$ is continuous $\Z$-cocycle rigid 
if the homomorphism $H^1(G)\to H^1(X\rtimes G)$ is surjective. 
Our notion of $H^1$-rigidity is almost the same 
as continuous $\Z$-cocycle rigidity of \cite{Li18ETDS}, 
but we require $H^1(G)\to H^1(X\rtimes G)$ is also injective 
in order to let the later arguments proceed smoothly. 
\end{remark}

The following proposition says that 
$H^1$-rigidity is preserved under taking a semi-direct product by $\Z$. 

\begin{proposition}\label{H1ofsemidirect}
Let $\Gamma$ be a countable discrete group and 
let $\Gamma\rtimes_\theta\Z$ be a semi-direct product 
with respect to $\theta\in\Aut(\Gamma)$. 
Let $\phi:\Gamma\rtimes_\theta\Z\curvearrowright X$ be an action 
on a totally disconnected locally compact Hausdorff space $X$. 
Suppose that $\phi:\Gamma\curvearrowright X$ is minimal and $H^1$-rigid. 
Then $\phi:\Gamma\rtimes_\theta\Z\curvearrowright X$ is also $H^1$-rigid. 
\end{proposition}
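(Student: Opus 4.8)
The plan is to realise the transformation groupoid of the $\Gamma\rtimes_\theta\Z$-action as a semi-direct product by $\Z$ of the transformation groupoid of the $\Gamma$-action, and then to compare the long exact sequences of Corollary \ref{LES} for the group $\Gamma$ and for this groupoid by means of the five lemma.

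Write $\H:=X\rtimes\Gamma$ and let $\tau\in\Homeo(X)$ be the homeomorphism induced by the generator of $\Z$. First I would check that $\tilde\theta(x,\gamma):=(\tau(x),\theta(\gamma))$ defines an automorphism of $\H$ and that $((x,\gamma),k)\mapsto(x,(\gamma,k))$ gives an isomorphism
\[
\H\rtimes_{\tilde\theta}\Z\ \cong\ X\rtimes(\Gamma\rtimes_\theta\Z),
\]
under which the natural homomorphism $\pi\colon X\rtimes(\Gamma\rtimes_\theta\Z)\to\Gamma\rtimes_\theta\Z$ is carried to $\pi_\Gamma\rtimes\id_\Z$, where $\pi_\Gamma\colon\H\to\Gamma$ is the natural homomorphism of the $\Gamma$-action. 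In particular $\pi_\Gamma\circ\tilde\theta=\theta\circ\pi_\Gamma$, so $\pi_\Gamma$ intertwines $\tilde\theta$ and $\theta$.

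Next I would apply Corollary \ref{LES} twice: once to the one-object groupoid $\Gamma$ with the automorphism $\theta$ (whose semi-direct product is the group $\Gamma\rtimes_\theta\Z$, and for which $H^0(\Gamma)=\Z$ and $H^1(\Gamma)=\Hom(\Gamma,\Z)$), and once to $\H$ with $\tilde\theta$. Since $\pi_\Gamma$ intertwines the two automorphisms, it should induce a morphism between the two long exact sequences, and the five-term segment around degree $1$ reads
\[
\begin{array}{ccccccccc}
H^0(\Gamma) &\longrightarrow& H^0(\Gamma) &\longrightarrow& H^1(\Gamma\rtimes_\theta\Z) &\longrightarrow& H^1(\Gamma) &\longrightarrow& H^1(\Gamma)
\end{array}
\]
lying over the analogous row for $\H$, with comparison maps $H^0(\pi_\Gamma),H^0(\pi_\Gamma),H^1(\pi),H^1(\pi_\Gamma),H^1(\pi_\Gamma)$. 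The main obstacle is to establish the commutativity of this ladder. I would do so by returning to the short exact sequence of cochain complexes from which Corollary \ref{LES} is built (as in its proof) and checking that pullback along $\pi_\Gamma$ is a morphism of those short exact sequences, so that the resulting ladder of long exact sequences commutes.

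Finally I would verify that the four outer vertical maps are isomorphisms and then invoke the five lemma. On the one hand $H^0(\Gamma)=\Z$, and minimality of $\Gamma\curvearrowright X$ forces every continuous $\Gamma$-invariant $\Z$-valued function on $X$ to be constant, whence $H^0(\H)=\Z$ and $H^0(\pi_\Gamma)$ is the identity of $\Z$. On the other hand $H^1(\pi_\Gamma)\colon\Hom(\Gamma,\Z)=H^1(\Gamma)\to H^1(\H)$ is an isomorphism precisely because $\Gamma\curvearrowright X$ is assumed to be $H^1$-rigid. Thus both outer columns are isomorphisms, and the five lemma yields that the middle map $H^1(\pi)$ is an isomorphism, which is exactly the assertion that $\Gamma\rtimes_\theta\Z\curvearrowright X$ is $H^1$-rigid. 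I expect the only genuinely delicate step to be the naturality of the exact sequence of Corollary \ref{LES}; the groupoid identification, the computation of $H^0$, and the invocation of rigidity are all routine.
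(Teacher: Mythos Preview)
Your proposal is correct and follows essentially the same approach as the paper: realise $X\rtimes(\Gamma\rtimes_\theta\Z)$ as $\H\rtimes_{\tilde\theta}\Z$ with $\pi_\Gamma$ intertwining $\tilde\theta$ and $\theta$, apply Corollary \ref{LES} to both $\Gamma$ and $\H$, and conclude by the five lemma using minimality for $H^0$ and the $H^1$-rigidity hypothesis for $H^1$. The paper presents the resulting ladder in the slightly condensed form $0\to\Z\to H^1(\cdot\rtimes\Z)\to H^1(\cdot)\to H^1(\cdot)$ (having already absorbed the $H^0$ terms into the initial $\Z$), but this is exactly your five-term segment after noting that $H^0(\Gamma)=H^0(\H)=\Z$ with $\id-H^0(\theta)=0$.
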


\begin{proof}
Set $\H:=X\rtimes_\phi\Gamma$ and 
$\G:=X\rtimes_\phi(\Gamma\times_\theta\Z)$. 
Let $\pi:\H\to\Gamma$ and $\pi':\G\to\Gamma\rtimes_\theta\Z$ be 
the natural homomorphisms. 
There exists an automorphism $\tilde\theta:\H\to\H$ such that 
$\theta\circ\pi=\pi\circ\tilde\theta$ and 
$\H\rtimes_{\tilde\theta}\Z$ is isomorphic to $\G$. 
Using Corollary \ref{LES}, we get the following commutative diagram: 
\[
\xymatrix@M=8pt{
0 \ar[r] & \Z \ar[r] \ar@{=}[d] & 
H^1(\Gamma\rtimes_\theta\Z) \ar[r] \ar[d]_{H^1(\pi')} & 
H^1(\Gamma) \ar[r]^{\id-H^1(\theta)} \ar[d]_{H^1(\pi)}^{\cong} & 
H^1(\Gamma) \ar[d]_{H^1(\pi)}^{\cong} \\
0 \ar[r] & \Z \ar[r] & H^1(\H\rtimes_{\tilde\theta}\Z) \ar[r] & 
H^1(\H) \ar[r]^{\id-H^1(\tilde\theta)} & H^1(\H), 
}
\]
where the horizontal sequences are exact. 
Notice that $H^0(\H)$ is $\Z$, because $\H$ is minimal. 
Since $H^1(\pi):H^1(\Gamma)\to H^1(\H)$ is an isomorphism, 
so is $H^1(\pi')$. 
\end{proof}

The following lemma will be used in Section 5. 
See \cite[Definition 3.1]{CRS17PAMS} (or \cite[Definition 4.1]{Ma12PLMS}) 
for the definition of Kakutani equivalence. 

\begin{lemma}\label{Kakutani}
Let $\G$ be an ample groupoid and 
let $A$ be a countable discrete abelian group. 
Let $\xi,\eta:\G\to A$ be homomorphisms. 
Suppose that 
$\xi$ and $\eta$ have the same cohomology class in $H^1(\G,A)$. 
\begin{enumerate}
\item The skew products $\G\times_\xi A$ and $\G\times_\eta A$ are 
isomorphic. 
\item When $\G\times_\xi A$ is minimal, 
the groupoids $\Ker\xi$ and $\Ker\eta$ are Kakutani equivalent. 
\end{enumerate}
\end{lemma}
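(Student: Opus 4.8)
The plan is to extract an explicit cocycle witness from the hypothesis that $[\xi]=[\eta]$ in $H^1(\G,A)$, and use it to build the desired isomorphism and Kakutani equivalence by hand. By Definition \ref{cohomology}, the equality of cohomology classes means $\xi-\eta=\delta^0(b)$ for some $b\in C(\G^{(0)},A)=\Ker\delta^0$ lifted appropriately; unwinding the coboundary formula $\delta^0(b)(g)=b(r(g))-b(s(g))$ (the two face maps $d^{(1)}_0,d^{(1)}_1$ being source and range), this says
\[
\xi(g)-\eta(g)=b(r(g))-b(s(g))\qquad\text{for all }g\in\G.
\]
The function $b:\G^{(0)}\to A$ is continuous but need not have compact support, which is the correct regularity for the arguments below.

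For part (1), I would write down the candidate isomorphism $\Phi:\G\times_\xi A\to\G\times_\eta A$ directly using $b$. The natural guess is
\[
\Phi(g,a):=\bigl(g,\,a+b(r(g))\bigr),
\]
and I would verify that $\Phi$ is a homeomorphism (its inverse is $(g,a)\mapsto(g,a-b(r(g)))$, both continuous since $b$ is) and that it respects the groupoid structure. The key computation is checking compatibility with the source and range maps and with multiplication: on the unit space $r(g,a)=(r(g),a)$, and one must confirm that $\Phi$ sends $s_\xi(g,a)=(s(g),a+\xi(g))$ to $s_\eta(\Phi(g,a))$, which is exactly where the cocycle identity $\xi(g)-\eta(g)=b(r(g))-b(s(g))$ is consumed. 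This is a routine but bookkeeping-heavy verification, and I expect it to go through cleanly once the formula for $\Phi$ is correctly calibrated against the source/range conventions recorded in the skew-product definition.

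For part (2), the target is Kakutani equivalence of $\Ker\xi$ and $\Ker\eta$. Here $\Ker\xi=\{g\in\G\mid\xi(g)=0\}$ is an open subgroupoid, and it is canonically isomorphic to the reduction of $\G\times_\xi A$ to the unit slice $\G^{(0)}\times\{0\}$ (identifying $g\in\Ker\xi$ with $(g,0)$, since multiplicability there forces $\xi(g)=0$). The strategy is to transport $\Ker\eta$ into $\G\times_\xi A$ via the isomorphism $\Phi$ from part (1): under $\Phi^{-1}$, the slice $\G^{(0)}\times\{0\}$ realizing $\Ker\eta$ corresponds to the graph $\{(x,b(x))\mid x\in\G^{(0)}\}\subset\G^{(0)}\times A$, so $\Ker\eta$ is isomorphic to the reduction of $\G\times_\xi A$ to this clopen transversal. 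Thus both $\Ker\xi$ and $\Ker\eta$ appear as reductions of the single groupoid $\G\times_\xi A$ to two open (indeed clopen) subsets $Y_\xi=\G^{(0)}\times\{0\}$ and $Y_\eta=\Phi^{-1}(\G^{(0)}\times\{0\})$ of its unit space.

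It then remains to show that these two reductions are Kakutani equivalent, and this is where I expect the main obstacle to lie. The definition of Kakutani equivalence (via \cite[Definition 3.1]{CRS17PAMS}) requires the two transversals to meet every orbit, or to be joined through a common full open subset; the minimality hypothesis on $\G\times_\xi A$ is precisely what guarantees this. Concretely, I would argue that when $\G\times_\xi A$ is minimal, any two nonempty clopen subsets of the unit space that are full (meet every orbit) give Kakutani-equivalent reductions, since each is Kakutani equivalent to the full groupoid (or to a common refinement contained in $Y_\xi\cap$-translates of $Y_\eta$). Establishing fullness of $Y_\xi$ and $Y_\eta$ and then invoking minimality to produce the required common open transversal is the delicate step: one must ensure the transversals are genuinely full, not merely nonempty, and check that the equivalence furnished respects the groupoid structure rather than just the orbit equivalence relation. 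Modulo citing the standard fact that reductions to full clopen sets of a minimal ample groupoid are Kakutani equivalent, the conclusion follows.
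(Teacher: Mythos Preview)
Your proposal is correct and follows essentially the same approach as the paper: the explicit isomorphism $\Phi(g,a)=(g,a+b(r(g)))$ is exactly the map the paper writes down, and for part~(2) the paper likewise identifies $\Ker\xi$ with the reduction of $\G\times_\xi A$ to the full clopen slice $\G^{(0)}\times\{0\}$ and then invokes part~(1) together with transitivity of Kakutani equivalence. Your concern about fullness is unnecessary---minimality of $\G\times_\xi A$ makes every nonempty clopen subset of its unit space full---and there is a harmless sign slip in your description of $Y_\eta$ (it is the graph of $-b$, not of $b$).
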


\begin{proof}
(1)\:
There exists a continuous function $f:\G^{(0)}\to A$ such that 
$\xi(g)=\eta(g)+f(r(g))-f(s(g))$ holds for all $g\in\G$. 
Define $\pi:\G\times_\xi A\to\G\times_\eta A$ 
by $\pi(g,i):=(g,i+f(r(g))$. 
It is plain that $\pi$ is an isomorphism. 

(2)\:
Since $\G\times_\xi A$ is minimal, 
the clopen subset $\G^{(0)}\times\{0\}$ is full. 
The reduction of $\G\times_\xi A$ to $\G^{(0)}\times\{0\}$ is 
naturally isomorphic to $\Ker\xi$. 
Therefore $\G\times_\xi A$ is Kakutani equivalent to $\Ker\xi$. 
This, together with (1), implies that 
$\Ker\xi$ and $\Ker\eta$ are Kakutani equivalent. 
\end{proof}

Let us recall from \cite{Re_text} 
the notion of the essential range (asymptotic range) of a cocycle. 

\begin{definition}[{\cite[Definition 4.3]{Re_text}}]\label{essentialrange}
Let $\G$ be an ample groupoid and 
let $\xi:\G\to A$ be a homomorphism 
from $\G$ to a countable discrete abelian group $A$. 
We say that $a\in A$ is an essential value of $\xi$ 
if for any nonempty clopen subset $Y\subset\G^{(0)}$ 
there exists $g\in \G|Y$ such that $\xi(g)=a$. 
The essential range of $\xi$ 
(called asymptotic range in \cite{Re_text}) 
consists of all essential values of $\xi$. 
\end{definition}

It is well-known that 
the essential range is a subgroup of $A$ and 
depends only on the cohomology class $[\xi]\in H^1(\G,A)$ of $\xi$ 
(\cite[Proposition 4.5]{Re_text}). 
The following lemma is also folklore. 
For readers' convenience, we include a proof here. 

\begin{lemma}\label{skewisminimal}
Let $\G$ be a minimal ample groupoid and 
let $\xi:\G\to A$ be a homomorphism 
from $\G$ to a countable discrete abelian group $A$. 
The following are equivalent. 
\begin{enumerate}
\item The skew product $\G\times_\xi A$ is minimal. 
\item The subgroupoid $\Ker\xi$ is minimal and 
the essential range of $\xi$ is $A$. 
\item The subgroupoid $\Ker\xi$ is minimal and 
the range of $\xi$ is $A$. 
\end{enumerate}
\end{lemma}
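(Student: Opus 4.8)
The plan is to prove the three equivalences by establishing $(1)\Rightarrow(2)\Rightarrow(3)\Rightarrow(1)$, working throughout with the description of the skew product $\G\times_\xi A$ whose unit space is $\G^{(0)}\times A$ and whose range and source maps send $(g,a)$ to $(r(g),a)$ and $(s(g),a+\xi(g))$ respectively. The central observation is that the orbit of a unit $(x,a)$ under $\G\times_\xi A$ consists exactly of the points $(r(g),a)$ where $g$ ranges over $s^{-1}(x)$ in $\G$, paired with the shift $a\mapsto a+\xi(g)$; equivalently, $(y,b)$ lies in the orbit of $(x,a)$ if and only if there exists $g\in\G$ with $s(g)=x$, $r(g)=y$, and $\xi(g)=a-b$. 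Minimality of $\G\times_\xi A$ means every such orbit is dense in $\G^{(0)}\times A$, which since $A$ is discrete amounts to density of $\{r(g):s(g)=x,\ \xi(g)=c\}$ in $\G^{(0)}$ for every $x$ and every $c\in A$.

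For $(1)\Rightarrow(2)$, I would first extract minimality of $\Ker\xi$: the reduction of $\G\times_\xi A$ to the clopen set $\G^{(0)}\times\{0\}$ is naturally isomorphic to $\Ker\xi$ (as already noted in the proof of Lemma \ref{Kakutani}), and minimality of a groupoid passes to the reduction to any full clopen set, so since $\G\times_\xi A$ is minimal and $\G^{(0)}\times\{0\}$ is full, $\Ker\xi$ is minimal. To see that the essential range is all of $A$, fix $a\in A$ and a nonempty clopen $Y\subset\G^{(0)}$; minimality of $\G\times_\xi A$ forces the orbit of a point in $Y\times\{0\}$ to meet $Y\times\{-a\}$, which produces $g\in\G|Y$ with $\xi(g)=a$, giving $a$ as an essential value.

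The step $(2)\Rightarrow(3)$ is immediate, since any essential value is in particular a value of $\xi$, so essential range $=A$ forces range $=A$. The substantive remaining implication is $(3)\Rightarrow(1)$, and this is where I expect the main obstacle. Here I assume $\Ker\xi$ is minimal and $\xi$ surjects onto $A$, and I must show every orbit in $\G\times_\xi A$ is dense. Given a target basic clopen set $Y\times\{b\}$ and a starting unit $(x,a)$, surjectivity of $\xi$ gives some $h\in\G$ with $\xi(h)=b-a$; composing with $h$ moves the second coordinate from $a$ to $b$ and relocates the base point within $\G$, after which I need to approach $Y$ staying inside the level $A=b$, i.e.\ using elements of $\xi$-value zero, which is precisely where minimality of $\Ker\xi$ is invoked. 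The delicate point is that the $\Ker\xi$-orbit of the relocated base point must be dense in $\G^{(0)}$; this follows from minimality of $\Ker\xi$ applied to that point, but one has to verify that the relocation by $h$ lands at a genuine unit to which $\Ker\xi$-minimality applies, and that the compositions remain well-defined in the groupoid. Assembling these—first shifting the $A$-coordinate by a single element of the required $\xi$-value, then moving within the fixed level by $\Ker\xi$ to reach an arbitrary neighborhood of the target—yields a groupoid element from $(x,a)$ into $Y\times\{b\}$, establishing density of the orbit and hence minimality of $\G\times_\xi A$.
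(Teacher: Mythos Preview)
Your proposal is correct and follows essentially the same route as the paper: the cycle $(1)\Rightarrow(2)\Rightarrow(3)\Rightarrow(1)$, identifying $\Ker\xi$ with the reduction of the skew product to $\G^{(0)}\times\{0\}$ for the first implication, and combining a single level-shifting element with $\Ker\xi$-minimality for the last.

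One small wrinkle in your $(3)\Rightarrow(1)$: the plan ``shift by $h$ first, then use $\Ker\xi$ to reach $Y$'' hits exactly the composability problem you flag, because the $h$ supplied by surjectivity of $\xi$ has no reason to have source (or range) at $x$, so you cannot compose with it directly from $(x,a)$. The paper avoids this by working with the orbit \emph{closure} $X$ of $(x,a)$ and applying $\Ker\xi$-minimality \emph{before} as well as after the shift: first $\Ker\xi$-minimality gives $\G^{(0)}\times\{a\}\subset X$, so in particular $(r(h),a)\in X$; then the skew-product arrow $(h,a)$ has range $(r(h),a)$ and source $(s(h),b)$, forcing $(s(h),b)\in X$; finally $\Ker\xi$-minimality (together with $X$ being closed and invariant) yields $\G^{(0)}\times\{b\}\subset X$. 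This is the same idea as yours, just with the extra preliminary use of $\Ker\xi$ that makes the composition with $h$ legitimate.
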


\begin{proof}
(1)$\implies$(2)\:
The subgroupoid $\Ker\xi$ is canonically isomorphic to 
the reduction of $\G\times_\xi A$ to $\G^{(0)}\times\{0\}$. 
Since $\G\times_\xi A$ is minimal, 
so is $\Ker\xi$. 
Let $Y\subset\G^{(0)}$ be a nonempty clopen subset. 
Take $y\in Y$ and $a\in A$. 
As the orbit of $(y,0)$ meets $Y\times\{a\}$, 
we can find $g\in\G$ such that 
$r(g)=y$, $s(g,0)=(s(g),\xi(g))\in Y\times\{a\}$. 
This proves that $g$ is in $\G|Y$ and $\xi(g)=a$. 
Hence the essential range of $\xi$ is $A$. 

(2)$\implies$(3) is obvious. 

(3)$\implies$(1)\:
Take $(x,a)\in\G^{(0)}\times A$. 
Let $X$ be the closure of the orbit of $(x,a)$. 
Then $X$ contains $\G^{(0)}\times\{a\}$, 
because $\Ker\xi$ is minimal. 
Since $\xi:\G\to A$ is surjective, for any $b\in A$, 
there exists $g\in\G$ such that $\xi(g)=b-a$. 
By $r(g,a)=(r(g),a)$, $s(g,a)=(s(g),b)$, 
$X$ intersects with $\G^{(0)}\times\{b\}$. 
Therefore $X$ contains $\G^{(0)}\times\{b\}$ 
for all $b\in A$. 
Consequently, $\G\times_\xi A$ is minimal. 
\end{proof}

\section{Stein groupoids}

In this section, 
we introduce the notion of Stein groupoids. 
First, we recall the definition of Stein's groups 
from \cite{St92TAMS}. 

\begin{definition}[Stein's groups, {\cite{St92TAMS}}]\label{Steingroup}
Let $\Lambda\subset(0,\infty)$ be a countable multiplicative subgroup 
of the positive real numbers 
and let $\Gamma\subset\R$ be a countable $\Z\Lambda$-module, 
which is dense in $\R$. 
Let $\ell\in\Gamma\cap(0,\infty)$. 
Stein's group $V(\Gamma,\Lambda,\ell)$ is the group consisting of 
piecewise linear bijections $f$ of $[0,\ell)$ satisfying the following: 
\begin{itemize}
\item $f$ is right continuous, 
\item $f$ has finitely many discontinuous or nondifferential points, 
all in $\Gamma$, 
\item $f$ has slopes only in $\Lambda$. 
\end{itemize}
\end{definition}

For $f\in V(\Gamma,\Lambda,\ell)$, 
we can find $m\in\N$, $t_1,t_2,\dots,t_{m+1}\in\Gamma$, 
$\lambda_1,\lambda_2,\dots,\lambda_m\in\Lambda$ and 
$s_1,s_2,\dots,s_m\in\Gamma$ such that 
\[
0=t_1<t_2<\dots<t_m<t_{m+1}=\ell
\]
and 
\[
f(t)=\lambda_it+s_i\quad\forall t\in[t_i,t_{i+1}). 
\]

\begin{remark}
The family of Stein's groups encompasses 
the following interesting examples 
which have been investigated from various directions. 
\begin{enumerate}
\item The Higman-Thompson groups $V_{n,r}$ \cite{MR0376874} correspond to 
choosing $\Lambda=\langle n\rangle$, $\Gamma=\Z[1/n]$ and $\ell=r$. 
Two Higman-Thompson groups $V_{n,r}$ and $V_{m,s}$ are isomorphic 
if and only if $n=m$ and $\gcd(n{-}1,r)=\gcd(m{-}1,s)$ 
(\cite{MR0376874,MR2831934}). 
K. S. Brown \cite{Br87JPuApAl} proved that $V_{n,r}$ is of type $F_\infty$ 
by constructing an action on a certain contractible complex. 
\item In \cite{St92TAMS}, Stein studied the case that 
$\Lambda$ is generated by finitely many natural numbers 
$n_1,n_2,\dots,n_k$ and $\Gamma=\Z[1/(n_1n_2\dots n_k)]$. 
It was shown that $V(\Gamma,\Lambda,\ell)$ is also of type $F_\infty$. 
Furthermore, 
the abelianization of $V(\Gamma,\Lambda,\ell)$ was computed. 
\item In \cite{Cl95Rocky,Cl00Illinois}, S. Cleary proved that 
$V(\Z[\lambda],\langle\lambda\rangle,\ell)$ is of type $F_\infty$, 
when $\lambda$ is an algebraic integer of degree two, 
such as $\sqrt{2}{-}1$ or $(\sqrt{5}-1)/2$. 
\end{enumerate}
\end{remark}

Tanner \cite{Ta2312arXiv} observed that 
Stein's groups are realized as topological full groups of ample groupoids. 
We would like to recall this construction. 
Let $B(\R)$ denote the abelian $C^*$-algebra 
consisting of bounded Borel functions $\R\to\C$. 
For a countable dense subset $\Delta\subset\R$, 
we consider the subalgebra $A\subset B(\R)$ generated by 
\[
\{1_{[t,s)}\mid t,s\in\Delta,\ t<s\}. 
\]
Let $\R_\Delta$ be the Gelfand spectrum of $A$, 
that is, $A\cong C_0(\R_\Delta)$. 
Then, $\R_\Delta$ is 
a totally disconnected, locally compact Hausdorff space. 
Since $\Delta$ is dense in $\R$, 
we can find a continuous surjection $q:\R_\Delta\to\R$. 
One has 
\[
\#q^{-1}(x)=\begin{cases}2&x\in\Delta\\1&x\notin\Delta. \end{cases}
\]
We may and do identify $\R_\Delta$ with 
\[
\left(\R\setminus\Delta\right)\sqcup\{t_+,t_-\mid t\in\Delta\}
\]
which admits a total order such that 
$q(x)<q(y)$ implies $x<y$ and $t_-<t_+$ for all $t\in\Delta$ 
(see \cite[Definition 4.1]{Ta2312arXiv}). 
The topology on $\R_\Delta$ agrees with the order topology. 
For any $t,s\in\Delta$ with $t<s$, the interval 
\[
(t_-,s_+)=[t_+,s_-]
\]
is compact and open in $\R_\Delta$, 
and these intervals form a basis for the topology on $\R_\Delta$ 
(see \cite[Lemma 4.2]{Ta2312arXiv}). 

Let $\Lambda\subset(0,\infty)$ and $\Gamma\subset\R$ be 
as in Definition \ref{Steingroup}. 
If $\Delta$ is invariant under the translation by $\Gamma$ 
(i.e.\ $x+t\in\Delta$ for any $x\in\Delta$ and $t\in\Gamma$), 
then the group $\Gamma$ acts on $A$, and hence acts on $\R_\Delta$. 
Furthermore, 
if $\Delta$ is invariant under the multiplication by $\Lambda$ 
(i.e.\ $\lambda x\in\Delta$ 
for any $x\in\Delta$ and $\lambda\in\Lambda$), then 
the semi-direct product $\Gamma\rtimes\Lambda$ acts on $\R_\Delta$. 
In particular, 
$\Gamma\rtimes\Lambda$ naturally acts on $\R_\Gamma$. 

\begin{definition}[Stein groupoids]\label{Steingroupoid}
Let $\Lambda\subset(0,\infty)$ be a countable multiplicative subgroup 
of the positive real numbers 
and let $\Gamma\subset\R$ be a countable $\Z\Lambda$-module, 
which is dense in $\R$. 
The action $\Gamma\rtimes\Lambda\curvearrowright\R_\Gamma$ yields 
the transformation groupoid $\R_\Gamma\rtimes(\Gamma\rtimes\Lambda)$. 
We call this the Stein groupoid, 
and denote it by $\S(\Gamma,\Lambda)$. 
\end{definition}

By definition, $\S(\Gamma,\Lambda)$ is an ample groupoid. 
We may identify $\R_\Gamma$ with $\S(\Gamma,\Lambda)^{(0)}$ 
via $x\mapsto (x,0,1)$. 
Note that $\S(\Gamma,\Lambda)$ can be also regarded as 
the semi-direct product of the ample groupoid $\R_\Gamma\rtimes\Gamma$ 
by the action of $\Lambda$. 
Indeed, the $\Lambda$-action on $\R_\Gamma\rtimes\Gamma$ is given by 
\[
\lambda\cdot(x,t):=(\lambda\cdot x,\lambda t). 
\]
It is straightforward to verify that 
$\R_\Gamma\rtimes(\Gamma\rtimes\Lambda)$ is isomorphic to 
$(\R_\Gamma\rtimes\Gamma)\rtimes\Lambda$. 
Notice that the Lebesgue measure on $\R$ gives rise to 
an $\R_\Gamma\rtimes\Gamma$-invariant measure, 
and such a measure is unique up to scalar multiplication. 
The action of $\Lambda$ scales 
the $\R_\Gamma\rtimes\Gamma$-invariant measure. 

The following lemma says that 
Stein's group $V(\Gamma,\Lambda,\ell)$ is realized 
as a topological full group of the Stein groupoid $\S(\Gamma,\Lambda)$. 

\begin{lemma}[{\cite[Lemma 4.4]{Ta2312arXiv}}]
Let $\Lambda,\Gamma,\ell$ be as in Definition \ref{Steingroup}. 
Then, Stein's group $V(\Gamma,\Lambda,\ell)$ is canonically isomorphic 
to the topological full group of 
the ample groupoid $\S(\Gamma,\Lambda)|[0_+,\ell_-]$. 
\end{lemma}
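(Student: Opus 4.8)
The plan is to construct an explicit map $\Phi$ from $V(\Gamma,\Lambda,\ell)$ to $[[\S(\Gamma,\Lambda)|[0_+,\ell_-]]]$ and to verify that it is a bijective group homomorphism. The entire argument rests on a dictionary between the combinatorics of piecewise linear bijections of $[0,\ell)$ and compact open bisections of the transformation groupoid. First I would record the basic correspondence: the continuous surjection $q:\R_\Gamma\to\R$ carries each clopen interval $[t_+,s_-]$ onto $[t,s]$ for $t,s\in\Gamma$ with $t<s$, and it intertwines the two actions, so that an element $(u,\lambda)\in\Gamma\rtimes\Lambda$ acting on $\R_\Gamma$ by $x\mapsto\lambda\cdot x+u$ descends to the affine map $t\mapsto\lambda t+u$ on $\R$, which is exactly a slope-$\lambda$ piece of a Stein element. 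Here the clopen interval $[t_+,s_-]$ plays the role of the half-open interval $[t,s)$, and right-continuity of a Stein map is precisely what makes the value at a break point $t\in\Gamma$ governed by the piece starting at $t_+$.

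To define $\Phi$, given $f\in V(\Gamma,\Lambda,\ell)$ with decomposition $f(t)=\lambda_it+s_i$ on $[t_i,t_{i+1})$, I would let $\alpha_f$ be the homeomorphism of $[0_+,\ell_-]$ which on each clopen piece $[(t_i)_+,(t_{i+1})_-]$ is given by the action of $(s_i,\lambda_i)\in\Gamma\rtimes\Lambda$, with witnessing compact open bisection
\[
U_f:=\bigsqcup_i\{(\lambda_i\cdot x+s_i,s_i,\lambda_i)\mid x\in[(t_i)_+,(t_{i+1})_-]\}.
\]
The hypotheses on $f$ do the work here: since $\bigsqcup_i[t_i,t_{i+1})=[0,\ell)$ lifts to $\bigsqcup_i[(t_i)_+,(t_{i+1})_-]=[0_+,\ell_-]$ we get $s(U_f)=[0_+,\ell_-]$, and since $f$ is a bijection its image pieces again tile $[0,\ell)$, giving $r(U_f)=[0_+,\ell_-]$; thus $U_f$ is a full bisection and $\alpha_f$ is a genuine element of the topological full group.

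For the homomorphism property, the composite of two Stein maps is again piecewise affine after passing to a common refinement of the break points, and on each piece of the refinement the composite affine map is computed by the group law of $\Gamma\rtimes\Lambda$; this is precisely how the corresponding bisections multiply in $\S(\Gamma,\Lambda)$, so $\Phi(fg)=\Phi(f)\Phi(g)$. For injectivity I would use that $q$ intertwines $\alpha_f$ and $f$, in the sense that $q(\alpha_f(x))=f(q(x))$ whenever $q(x)\in[0,\ell)\setminus\Gamma$; as such points are dense, $\alpha_f$ determines $f$, so $\alpha_f=\id$ forces $f=\id$.

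For surjectivity, given $\alpha\in[[\S(\Gamma,\Lambda)|[0_+,\ell_-]]]$ I would take the unique compact open bisection $U$ with $r(U)=[0_+,\ell_-]=s(U)$. Since $\S(\Gamma,\Lambda)$ is the transformation groupoid of $\Gamma\rtimes\Lambda\curvearrowright\R_\Gamma$, I can write $U$ as a finite disjoint union of pieces $V_j\times\{(u_j,\lambda_j)\}$ with each $V_j\subset\R_\Gamma$ compact open, and then refine the $V_j$ into finitely many basic intervals $[a_+,b_-]$ using the interval basis for the topology on $\R_\Gamma$. Reading off the affine map $t\mapsto\lambda_jt+u_j$ on each descended half-open interval $[a,b)$ produces a piecewise linear map $f$ on $[0,\ell)$ with break points in $\Gamma$ and slopes in $\Lambda$, and fullness of $U$ guarantees these pieces tile both domain and range, so $f$ is a right-continuous bijection and hence lies in $V(\Gamma,\Lambda,\ell)$ with $\Phi(f)=\alpha$. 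I expect the main obstacle to be exactly this last bookkeeping at the break points: one must check that the descent via $q$ turns the clopen interval decomposition into an honest partition of $[0,\ell)$ with no overlaps or gaps, and that right-continuity is produced automatically. This is where the asymmetric identification $t\leftrightarrow t_+$ of break points, together with the total order on $\R_\Gamma$ and the $2$-to-$1$ behaviour of $q$ over interior points of $\Gamma$, has to be used with care.
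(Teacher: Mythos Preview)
The paper does not give its own proof of this lemma; it is stated with attribution to \cite[Lemma 4.4]{Ta2312arXiv} and then used without further argument. Your proposal is the natural direct verification---building the dictionary between piecewise-affine data on $[0,\ell)$ and full compact open bisections of the transformation groupoid---and it is correct, so there is nothing substantive in the present paper to compare it against.
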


This, together with Theorem \ref{isomorphism}, implies the following. 

\begin{lemma}\label{groupVSgroupoid}
For $i=1,2$, 
let $\Lambda_i,\Gamma_i,\ell_i$ be as in Definition \ref{Steingroup}. 
The following are equivalent. 
\begin{enumerate}
\item $V(\Gamma_1,\Lambda_1,\ell_1)$ is isomorphic to 
$V(\Gamma_2,\Lambda_2,\ell_2)$ as discrete groups. 
\item $D(V(\Gamma_1,\Lambda_1,\ell_1))$ is isomorphic to 
$D(V(\Gamma_2,\Lambda_2,\ell_2))$ as discrete groups. 
\item $\S(\Gamma_1,\Lambda_1)|[0_+,\ell_{1-}]$ is isomorphic to 
$\S(\Gamma_2,\Lambda_2)|[0_+,\ell_{2-}]$ as ample groupoids. 
\end{enumerate}
\end{lemma}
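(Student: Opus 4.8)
The plan is to prove Lemma \ref{groupVSgroupoid} by combining the two results that have just been assembled, namely the preceding lemma (realizing Stein's group as a topological full group) and the isomorphism theorem (Theorem \ref{isomorphism}). The whole content of the statement is a matching-up of hypotheses: the isomorphism theorem applies to essentially principal, minimal ample groupoids with compact unit space, and the reduced Stein groupoids $\S(\Gamma_i,\Lambda_i)|[0_+,\ell_{i-}]$ are exactly of this kind. So the first thing I would do is verify, once and for all, that each reduction $\G_i := \S(\Gamma_i,\Lambda_i)|[0_+,\ell_{i-}]$ satisfies the three standing hypotheses of Theorem \ref{isomorphism}. The unit space is the compact open interval $[0_+,\ell_{i-}]$ in $(\R_{\Gamma_i})$, which is a Cantor set, so it is compact and totally disconnected, giving ampleness. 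Minimality follows because $\Gamma_i\rtimes\Lambda_i$ acts minimally on $\R_{\Gamma_i}$ (the $\Gamma_i$-orbits are already dense since $\Gamma_i$ is dense in $\R$), and minimality passes to the reduction to a clopen full subset. Essential principality should be checked from the fact that the action $\Gamma_i\rtimes\Lambda_i\curvearrowright\R_{\Gamma_i}$ is topologically free: a nontrivial group element $(t,\lambda)$ acts as $x\mapsto\lambda x+t$, which fixes at most the single real point $t/(1-\lambda)$ (or no point, when $\lambda=1,t\neq0$), so its fixed-point set in $\R_{\Gamma_i}$ has empty interior.

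With the hypotheses in place, the logical structure is a short chain of equivalences. The preceding lemma gives a canonical isomorphism $V(\Gamma_i,\Lambda_i,\ell_i)\cong[[\G_i]]$ of discrete groups. Hence condition (1) of our lemma, asserting $V(\Gamma_1,\Lambda_1,\ell_1)\cong V(\Gamma_2,\Lambda_2,\ell_2)$, is equivalent to $[[\G_1]]\cong[[\G_2]]$, which is precisely condition (2) of Theorem \ref{isomorphism}. Likewise, since derived subgroups are preserved under group isomorphism, condition (2) of our lemma is equivalent to $D([[\G_1]])\cong D([[\G_2]])$, which is condition (3) of Theorem \ref{isomorphism}. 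Finally condition (3) of our lemma, $\G_1\cong\G_2$ as \'etale groupoids, is condition (1) of Theorem \ref{isomorphism}. The theorem asserts these three groupoid-level conditions are mutually equivalent, so transporting them back through the canonical isomorphisms yields the equivalence of the three conditions in the present lemma.

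I would therefore write the proof as: first establish the standing hypotheses for $\G_i$, then cite the preceding lemma to translate (1) and (2) of this lemma into conditions (2) and (3) of Theorem \ref{isomorphism}, observe (3) here is literally condition (1) there, and conclude by invoking the theorem. The only genuinely substantive point — and the one I would spend the most care on — is the verification of essential principality and minimality of the reduction, since those are the hypotheses of Theorem \ref{isomorphism} that are not completely automatic. I expect essential principality to be the main obstacle to state cleanly: one must argue that the isotropy bundle of $\R_{\Gamma_i}\rtimes(\Gamma_i\rtimes\Lambda_i)$ has interior equal to the unit space, equivalently that the set of points with trivial isotropy is dense, and this rests on the affine maps $x\mapsto\lambda x+t$ having fixed-point sets that are at most singletons in $\R$ (hence nowhere dense in $\R_{\Gamma_i}$). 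Everything else is a formal matching of statements, so the proof can be kept very short once these two dynamical facts are recorded.
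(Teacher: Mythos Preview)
Your proposal is correct and follows exactly the paper's approach: the paper offers no explicit proof beyond the sentence ``This, together with Theorem~\ref{isomorphism}, implies the following,'' and your argument is precisely the unpacking of that sentence, with the added care of verifying the standing hypotheses (compactness of the unit space, minimality, and essential principality) that the paper leaves implicit.
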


Thus, we have reduced the classification of Stein's groups 
to the classification of certain ample groupoids. 
More precisely, in order to classify Stein's groups, 
it suffices to determine 
when two given Stein groupoids (or their reductions) are 
isomorphic to each other. 
As a first step of this, we observe that 
the isotropy groups of $\S(\Gamma,\Lambda)$ detect 
the isomorphism class of the group $\Lambda$. 

\begin{lemma}\label{isotropy0}
Let $\S=\S(\Gamma,\Lambda)$ be a Stein groupoid and 
let $\pi:\S\to\Lambda$ be the natural homomorphism. 
\begin{enumerate}
\item For every $x\in\R_\Gamma$, the restriction of $\pi$ 
to the isotropy group $\S_x=r^{-1}(x)\cap s^{-1}(x)$ is injective. 
\item For every $x\in\{t_+,t_-\mid t\in\Gamma\}$, 
$\pi$ induces an isomorphism between $\S_x$ and $\Lambda$. 
\end{enumerate}
\end{lemma}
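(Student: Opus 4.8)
The plan is to reduce both assertions to elementary facts about the affine action of $\Gamma\rtimes\Lambda$ on $\R$, transported through the equivariant surjection $q\colon\R_\Gamma\to\R$. Since $\S=\R_\Gamma\rtimes(\Gamma\rtimes\Lambda)$ is a transformation groupoid, its isotropy group at $x$ is
\[
\S_x=\{(x,\gamma)\mid\gamma\in\Gamma\rtimes\Lambda,\ \gamma\cdot x=x\},
\]
which I would identify with the stabilizer $\{\gamma\in\Gamma\rtimes\Lambda\mid\gamma\cdot x=x\}$; under this identification $\pi|_{\S_x}$ is exactly the projection $(t,\lambda)\mapsto\lambda$. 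I would record that an element $\gamma=(t,\lambda)$ acts on $\R$ by the affine map $y\mapsto\lambda y+t$, and that $q$ intertwines the actions on $\R_\Gamma$ and on $\R$. Consequently $\gamma\cdot x=x$ forces $\gamma\cdot q(x)=q(x)$, so $\S_x$ embeds into the affine stabilizer $\{(t,\lambda)\mid\lambda q(x)+t=q(x)\}$ of the point $q(x)\in\R$.

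For (1), I would argue directly with this embedding, so that no precise description of $\S_x$ is needed. Suppose $(x,\gamma_1),(x,\gamma_2)\in\S_x$ satisfy $\pi(\gamma_1)=\pi(\gamma_2)$. Then $\gamma_1\gamma_2^{-1}$ fixes $x$ (hence $q(x)$) and has slope $1$, so it is a pure translation $(s,1)$ fixing $q(x)\in\R$; as a nontrivial translation of $\R$ has no fixed point, $s=0$ and $\gamma_1=\gamma_2$. Thus $\pi|_{\S_x}$ is injective for every $x\in\R_\Gamma$, regardless of whether $q(x)$ lies in $\Gamma$.

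For (2), fix $t\in\Gamma$ and $x\in\{t_+,t_-\}$. Injectivity is already supplied by (1), so it remains to prove surjectivity of $\pi\colon\S_x\to\Lambda$. Given $\lambda\in\Lambda$, the affine map $y\mapsto\lambda y+(1-\lambda)t$ fixes $t$, and its translation part $(1-\lambda)t=t-\lambda t$ lies in $\Gamma$ precisely because $\Gamma$ is a $\Z\Lambda$-module; hence $\gamma:=((1-\lambda)t,\lambda)\in\Gamma\rtimes\Lambda$ fixes $t\in\R$. The remaining step---which I expect to be the main obstacle---is to promote this to a fixed point of $\gamma$ in $\R_\Gamma$: since $t\in\Gamma=\Delta$, the fiber $q^{-1}(t)=\{t_-,t_+\}$ has two points, which $\gamma$ a priori only permutes. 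Here I would invoke the order structure of $\R_\Gamma$. Because $\lambda>0$, the map sends each half-open interval $[a,b)$ to $[\gamma a,\gamma b)$, preserving the left-closed right-open structure, so the induced homeomorphism of $\R_\Gamma$ preserves the total order and the $\pm$-labelling; combined with $t_-<t_+$ this forces $\gamma\cdot t_-=t_-$ and $\gamma\cdot t_+=t_+$ rather than a transposition. Therefore $\gamma\in\S_x$ with $\pi(\gamma)=\lambda$, which proves surjectivity, and together with (1) shows that $\pi$ restricts to an isomorphism $\S_x\cong\Lambda$.
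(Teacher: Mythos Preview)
Your proof is correct and follows essentially the same approach as the paper's: for (1) you show the kernel of $\pi|_{\S_x}$ is trivial by noting that a nontrivial translation has no fixed point, and for (2) you exhibit the same explicit element $((1-\lambda)t,\lambda)$ in the stabilizer. The paper's proof is terser and simply asserts that this element lies in $\S_x$, whereas you supply the additional justification via order-preservation that $\gamma$ fixes each of $t_-$ and $t_+$ individually rather than swapping them.
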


\begin{proof}
(1)\:
Let $(x,s,\lambda)\in\S_x\subset\S=\R_\Gamma\rtimes(\Gamma\rtimes\Lambda)$. 
We have $x=\lambda^{-1}(x-s)$. 
If $(x,s,\lambda)$ is in the kernel of $\pi$, then $\lambda=1$, 
and hence $s=0$. 
Thus, the kernel of $\pi|\S_x$ is trivial. 

(2)\:
Let $t\in\Gamma$ and $x\in\{t_+,t_-\}$. 
For any $\lambda\in\Lambda$, 
$(x,(1-\lambda)t,\lambda)\in\S$ belongs to $\S_x$ and 
$\pi(x,(1-\lambda)t,\lambda)=\lambda$. 
It follows that $\pi|\S_x:\S_x\to\Lambda$ is surjective. 
\end{proof}

\begin{lemma}\label{isotropy}
For $i=1,2$, 
let $\S_i:=\S(\Gamma_i,\Lambda_i)$ be a Stein groupoid. 
If $\S_1$ is isomorphic to $\S_2$ and 
$\Lambda_1$ is finitely generated, 
then $\Lambda_1\cong\Lambda_2$. 
\end{lemma}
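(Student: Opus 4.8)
The plan is to use Lemma \ref{isotropy0} to exhibit $\Lambda_1$ and $\Lambda_2$ as subgroups that embed into one another, and then to use the finite generation of $\Lambda_1$ to upgrade this mutual embeddability to an isomorphism. Write $\pi_i:\S_i\to\Lambda_i$ for the natural homomorphisms.

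First I would record the elementary but crucial fact that isotropy groups are a groupoid invariant. Suppose $\Phi:\S_1\to\S_2$ is an isomorphism of \'etale groupoids, with induced homeomorphism $\phi:\S_1^{(0)}\to\S_2^{(0)}$ of unit spaces. Since the isotropy group $(\S_i)_x=r^{-1}(x)\cap s^{-1}(x)$ is defined purely in terms of the range and source maps, $\Phi$ restricts, for each unit $x$, to a group isomorphism $(\S_1)_x\xrightarrow{\cong}(\S_2)_{\phi(x)}$.

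Next I would apply this at a $\Gamma$-point. Take $0_+\in\R_{\Gamma_1}$ (the point $t_+$ for $t=0\in\Gamma_1$). By Lemma \ref{isotropy0}(2), $\pi_1$ restricts to an isomorphism $(\S_1)_{0_+}\cong\Lambda_1$, so transporting through $\Phi$ gives $\Lambda_1\cong(\S_1)_{0_+}\cong(\S_2)_{\phi(0_+)}$. By Lemma \ref{isotropy0}(1), $\pi_2$ embeds $(\S_2)_{\phi(0_+)}$ into $\Lambda_2$; hence $\Lambda_1$ embeds into $\Lambda_2$. Running the identical argument with $\Phi^{-1}$ and the point $0_+\in\R_{\Gamma_2}$ yields an embedding of $\Lambda_2$ into $\Lambda_1$.

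Finally I would close with a rank count. Each $\Lambda_i$ is torsion-free abelian, being a subgroup of the multiplicative group $(0,\infty)$, and $\Lambda_1$ is finitely generated by hypothesis, so $\Lambda_1\cong\Z^k$ for some $k$. The embedding $\Lambda_2\hookrightarrow\Lambda_1\cong\Z^k$ forces $\Lambda_2$ to be free abelian of rank at most $k$, while the embedding $\Z^k\cong\Lambda_1\hookrightarrow\Lambda_2$ forces $\rank\Lambda_2\geq k$; hence $\Lambda_2\cong\Z^k\cong\Lambda_1$. The only place the hypothesis is genuinely needed — and the main, if modest, obstacle — is exactly this last step: mutual embeddability of abelian groups does not in general imply isomorphism, and it is the finite generation of $\Lambda_1$ (reducing both groups to free abelian groups of finite rank) that makes the two embeddings collapse to an isomorphism.
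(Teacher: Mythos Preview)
Your proof is correct and follows essentially the same approach as the paper: use Lemma~\ref{isotropy0} to realize each $\Lambda_i$ as an isotropy group and to embed any isotropy group into $\Lambda_i$, transport through the isomorphism $\Phi$ to get mutual embeddings $\Lambda_1\hookrightarrow\Lambda_2$ and $\Lambda_2\hookrightarrow\Lambda_1$, and then invoke finite generation of $\Lambda_1$ to conclude. Your version is slightly more explicit in the final rank argument, which the paper leaves as a one-line remark.
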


\begin{proof}
Suppose that $\Phi:\S_1\to\S_2$ is an isomorphism. 
For each $i=1,2$, by the lemma above, 
there exists $x_i\in\R_{\Gamma_i}$ 
such that $(\S_i)_{x_i}\cong\Lambda_i$. 
Also, $\Phi((\S_1)_{x_1})=(\S_2)_{\Phi(x_1)}$ is isomorphic 
to a subgroup of $\Lambda_2$. 
Therefore $\Lambda_1$ is isomorphic to a subgroup of $\Lambda_2$. 
Similarly $\Lambda_2$ is isomorphic to a subgroup of $\Lambda_1$. 
As $\Lambda_1$ is finitely generated, 
one can conclude $\Lambda_1\cong\Lambda_2$. 
\end{proof}

\begin{remark}
Several results are known 
concerning the homology groups of Stein groupoids. 
When $\lambda\neq1$ is an algebraic integer, 
the homology groups of $\S(\Z[1/\lambda],\langle\lambda\rangle)$ 
can be computed in terms of the minimal polynomial of $\lambda$ 
(see Corollary 5.6 and Lemma 5.9 of \cite{Li15JFA}). 
When $\Lambda$ is generated by finitely many natural numbers 
and $\Gamma=\Z[\Lambda]$, 
the homology groups of $\S(\Gamma,\Lambda)$ was computed 
in \cite[Section 6.2]{Ta2304arXiv}. 
\end{remark}

\section{Cohomology of interval exchange transformations}

Let $\Gamma\subset\R$ be 
a countable dense (additive) subgroup. 
We let $\rank\Gamma$ denote 
the dimension of the $\Q$-vector space $\Gamma\otimes\Q$ 
and call it the rank of $\Gamma$. 
The rank is one if and only if $\Gamma$ is contained in 
$\{qt\mid q\in\Q\}$ for some $t\in\Gamma\setminus\{0\}$. 
Let $\Delta\subset\R$ be 
a countable dense subset containing $0$ 
which is invariant under the translation by $\Gamma$. 
Thus, for any $x\in\Delta$ and $t\in\Gamma$, 
we have $x+t\in\Delta$. 
The group $\Gamma$ acts on $\R_\Delta$ by translation. 
This action can be regarded 
as interval exchange transformations on $\R_\Delta$. 
Let $\mathcal{H}=\mathcal{H}(\Gamma,\Delta):=\R_\Delta\rtimes\Gamma$ be 
the associated transformation groupoid. 
In this subsection 
we would like to compute $H^1(\H)$ when $\rank\Gamma\geq2$. 

To start with, we write $\Gamma$ 
as an increasing union of finitely generated subgroups $\Gamma_n$, 
i.e.\ $\Gamma_n\subset\Gamma_{n+1}$, $\bigcup_n\Gamma_n=\Gamma$ 
and each $\Gamma_n$ is a free abelian group of finite rank. 
Let $\mathcal{H}_n:=\R_\Delta\rtimes\Gamma_n$ be 
the transformation groupoid, 
so that $\H=\bigcup_n\H_n$. 

\begin{example}
Later we will consider the case 
where a multiplicative subgroup $\Lambda\subset(0,\infty)$ 
acts on $\Gamma$. 
The simplest example is $\Lambda=\langle\lambda\rangle$, 
the group generated by a single positive real number $\lambda\neq1$, 
and $\Gamma=\Z[\Lambda]\subset\R$ (the ring generated by $\Lambda$). 
Clearly, $\rank\Gamma=\rank\Z[\lambda]=1$ 
if and only if $\lambda$ is rational, 
and $\rank\Gamma=\infty$ 
if and only if $\lambda$ is transcendental. 
Suppose that $\lambda$ is an algebraic number of degree $d$. 
Let $f(x)=a_dx^d+a_{d-1}x^{d-1}+\dots+a_1x+a_0$ be 
the minimal polynomial over $\Z$ 
such that $\gcd(a_0,a_1,\dots,a_d)=1$. 
As observed in \cite[Proposition 2.1]{CPPR11JFA}, 
one has 
\[
\Z+\Z\lambda+\dots+\Z\lambda^{d-1}\subset\Gamma
\subset\Z\left[\frac{1}{m}\right]+\Z\left[\frac{1}{m}\right]\lambda
+\dots+\Z\left[\frac{1}{m}\right]\lambda^{d-1}, 
\]
where $m:=a_da_0$. 
Hence $\rank\Gamma$ is $d$. 
By letting $\Gamma_n$ be 
the subgroup generated by $\{\lambda^k\mid\lvert k\rvert\leq n+d\}$, 
we may assume that $\Gamma_n$ is isomorphic to $\Z^d$ 
and the index of $\Gamma_n$ in $\Gamma_{n+1}$ is $m$. 
Moreover, 
if $\lambda$ and $\lambda^{-1}$ are both algebraic integers, 
then $\Gamma$ equals $\Z+\Z\lambda+\dots+\Z\lambda^{d-1}$, 
which itself is isomorphic to $\Z^d$. 
\end{example}

\begin{lemma}\label{H1ofHn}
\begin{enumerate}
\item When $\rank\Gamma_n\geq3$, 
$\Gamma_n\curvearrowright\R_\Delta$ is $H^1$-rigid. 
\item When $\rank\Gamma_n=2$, 
we have $H^1(\H_n)\cong H_0(\H_n)$. 
\end{enumerate}
\end{lemma}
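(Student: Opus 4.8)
The engine in both parts is the long exact sequence of Corollary \ref{LES}, applied to a splitting $\Gamma_n=\Gamma'\oplus\Z c$ in which $\Gamma'$ is a rank-$(d{-}1)$ direct summand (here $d:=\rank\Gamma_n$) and $c$ is a complementary generator. Writing $\H':=\R_\Delta\rtimes\Gamma'$ and letting $\theta$ be the automorphism of $\H'$ induced by translation by $c$, we have $\H_n\cong\H'\rtimes_\theta\Z$, so Corollary \ref{LES} relates $H^*(\H_n)$ to $H^*(\H')$ through the action of $\theta$. Throughout I would use two elementary facts: a finitely generated subgroup of $\R$ of rank at least $2$ is dense, hence acts minimally on $\R_\Delta$; and for a minimal $\H$ one has $H^0(\H)\cong\Z$, with every groupoid automorphism acting trivially on it.

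For part (1) I would argue by induction on $d\geq3$. When $d\geq4$ the summand $\Gamma'$ has rank $d{-}1\geq3$, so by the inductive hypothesis $\Gamma'\curvearrowright\R_\Delta$ is $H^1$-rigid, and it is minimal since $\rank\Gamma'\geq2$. As $\Gamma_n=\Gamma'\times\Z$ with the extra $\Z$ acting by translation by $c$, Proposition \ref{H1ofsemidirect} (with trivial $\theta\in\Aut(\Gamma')$) applies directly and gives $H^1$-rigidity of $\Gamma_n\curvearrowright\R_\Delta$. The base case $d=3$ is the one place where Proposition \ref{H1ofsemidirect} is unavailable, because the rank-$2$ summand $\Gamma'$ is not $H^1$-rigid; here I would feed in part (2) instead. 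Since $\H'$ and $\H_3$ are both minimal, $H^0(\theta)=\id$ on $H^0(\H')\cong\Z$, and the sequence of Corollary \ref{LES} collapses to
\[
0\longrightarrow\Z\longrightarrow H^1(\H_3)\longrightarrow H^1(\H')^{\theta}\longrightarrow0 .
\]
By part (2) the natural (hence $\theta$-equivariant) isomorphism $H^1(\H')\cong H_0(\H')$ identifies $H^1(\H')^\theta$ with $H_0(\H')^\theta$, so it remains to show this fixed-point group is free of rank $2$ and that the extension assembles, through the natural map, to $\Hom(\Gamma_3,\Z)\cong\Z^3$.

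For part (2) I would establish a Poincar\'e duality for $\H_n=\R_\Delta\rtimes\Z^2$ of formal dimension $1$, realized through the cap product of Proposition \ref{capproduct}. Concretely, I would produce a fundamental class $[\mu]\in H_1(\H_n,\Z)$---arising from the canonical $\R_\Delta\rtimes\Gamma_n$-invariant (Lebesgue) measure together with the ordered structure of $\R_\Delta$---and prove that
\[
\frown[\mu]\colon H^m(\H_n,A)\longrightarrow H_{1-m}(\H_n,A)
\]
is an isomorphism for every $m$ and every coefficient group $A$. Taking $m=1$ and $A=\Z$ yields exactly $H^1(\H_n)\cong H_0(\H_n)$, while $m\geq2$ forces the higher cohomology to vanish, consistent with $H_2(\H_n)=0$ (there is no nonzero $\Gamma_n$-invariant compactly supported function). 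The reason the formal dimension is $1$ rather than $2$ is the mismatch between the continuous cochains $C(\cdot,\Z)$ computing $H^*$ and the compactly supported chains $C_c(\cdot,\Z)$ computing $H_*$: this mismatch behaves like Poincar\'e--Lefschetz duality for the line $\R$, a shift of $1$ exchanging $C(\R_\Delta,\Z)$ and $C_c(\R_\Delta,\Z)$, which combines with the rank-$2$ group direction to leave net dimension $1$.

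The main obstacle is proving that $\frown[\mu]$ is an isomorphism, and this is entirely a statement about the two ends $\pm\infty$ of $\R_\Delta$. I would reduce it, via the short exact sequence $0\to C_c(\R_\Delta,\Z)\to C(\R_\Delta,\Z)\to Q\to0$ with $Q$ the module of germs at infinity, together with iterated use of Corollary \ref{LES} down to the rank-$1$ groupoid $\R_\Delta\rtimes\Z$, to showing that the ends contribute nothing after passing to $\Z^2$-(co)invariants. The delicate point is that for a single translation the $\langle a\rangle$-coinvariants of $C(\R_\Delta,\Z)$ and of $C_c(\R_\Delta,\Z)$ genuinely differ, by the non-compactly-supported ``ramp'' functions; but feeding in a second, rationally independent translation annihilates this discrepancy, and quantifying the resulting vanishing of the group homology $H_*(\Z^2;Q)$ is the technical heart. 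The same ends analysis also resolves the loose end in the base case of part (1): a generic third translation $\theta=T_c$ has no nonzero invariants on the part of $H_0(\H')$ lying beyond $\Hom(\Gamma',\Z)$, so that $H^1(\H')^\theta\cong\Z^2$ as required.
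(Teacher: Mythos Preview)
Your inductive scheme for part (1) via Proposition \ref{H1ofsemidirect} and Corollary \ref{LES} is sound in outline, and the $d\geq4$ step is fine. The substantive content, however, all rests on part (2), and there the proposal does not actually prove anything: you assert a Poincar\'e duality $H^m(\H_n)\cong H_{1-m}(\H_n)$ via cap product with a class $[\mu]\in H_1(\H_n)$, but neither construct $[\mu]$ concretely nor verify that capping with it is an isomorphism. Your proposed reduction to the vanishing of $H_*(\Z^2;Q)$ with $Q=C(\R_\Delta,\Z)/C_c(\R_\Delta,\Z)$ is not carried out, and $Q$ is an enormous module (germs of \emph{all} locally constant integer functions at $\pm\infty$, not just eventually constant ones), so this vanishing is far from obvious. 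The $d=3$ base case then inherits this gap, and in addition requires you to check that your isomorphism in (2) is $\theta$-equivariant and that the resulting extension is realized by the natural map $H^1(\Gamma_3)\to H^1(\H_3)$---both plausible, but neither argued.

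The paper's proof bypasses all of this with two moves you are missing. First, instead of the unwieldy sequence $0\to C_c\to C\to Q\to0$, it uses $0\to C_c(\R_\Delta,\Z)\to F\to\Z\to0$, where $F$ is the span of the half-line indicators $1_{[a_+,\infty)}$; the point is that $F$ is a \emph{free} $\Z\Gamma_n$-module (the half-lines are permuted freely by translation), so $H_q(\Gamma_n;F)$ is concentrated in degree $0$ and one reads off $H_q(\Gamma_n;C_c)\cong H_{q+1}(\Gamma_n)$ for $q\geq1$ immediately. Second, rather than attempting duality on the non-compact groupoid $\H_n$, the paper restricts to a fundamental domain $[0_+,l_-]$, where $\H_n|[0_+,l_-]$ is a minimal $\Z^{k-1}$-action on a Cantor set, and invokes the standard Poincar\'e duality for free abelian Cantor actions. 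This gives $H^1(\H_n)\cong H_{k-2}(\H_n)$ directly, yielding both parts at once without any ends analysis.
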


\begin{proof}
The following argument is based on \cite[Proposition 5.5]{Li15JFA} 
(see also \cite[Remark 5.8]{Li15JFA} and 
\cite[Lemma 5.1]{Ta2304arXiv}), 
but we include a complete proof for the convenience of the reader. 
Let $F\subset C(\R_\Delta,\Z)$ be the subgroup 
generated by $C_c(\R_\Delta,\Z)$ and $1_{[0_+,\infty)}$. 
Then 
\[
\xymatrix@M=8pt{
0 \ar[r] & C_c(\R_\Delta,\Z) \ar[r] & F \ar[r] & 
\Z \ar[r] & 0
}
\tag{$*$}
\label{ses}
\]
is a short exact sequence of $\Z\Gamma_n$-modules. 
It is easy to see that 
$\{1_{[a_+,\infty)}\mid a\in\Delta\}$ forms 
a $\Z$-basis of $F$ on which $\Gamma_n$ acts freely. 
Hence 
\[
F\cong
\bigoplus_{\Delta/\Gamma_n}\Z\Gamma_n
\]
as $\Z\Gamma_n$-modules, 
where $\Delta/\Gamma_n$ is the quotient of $\Delta$ 
by the action of $\Gamma_n$. 
Thus 
\[
H_q(\Gamma_n,F)\cong
\begin{cases}
\bigoplus_{\Delta/\Gamma_n}\Z & q=0\\
0 & q\geq1. 
\end{cases}
\]
The homology long exact sequence attached to \eqref{ses} 
becomes 
\[
\xymatrix@M=8pt{
\cdots \ar[r] & H_2(\Gamma_n,C_c(\R_\Delta,\Z)) \ar[r] & 
0 \ar[r] & H_2(\Gamma_n) & \\
\ar[r] & H_1(\Gamma_n,C_c(\R_\Delta,\Z)) \ar[r] & 
0 \ar[r] & H_1(\Gamma_n) & \\
\ar[r] & H_0(\Gamma_n,C_c(\R_\Delta,\Z)) \ar[r] & 
\bigoplus_{\Delta/\Gamma_n}\Z \ar[r] & \Z \ar[r] & 0. \\
}
\]
which implies 
\[
H_q(\Gamma_n,C_c(\R_\Delta,\Z))\cong H_{q+1}(\Gamma_n)
\]
for all $q\geq1$. 

Suppose that $\Gamma_n$ is isomorphic to $\Z^k$. 
Choose a positive real number $l\in\Gamma_n$ 
so that $\Gamma_n/l\Z$ is torsion free. 
The groupoid $\H_n|[0_+,l_-]$ can be viewed 
as a transformation groupoid of an action of $\Z^{k-1}$ 
on the Cantor set. 
Hence we get 
\begin{align*}
H^1(\H_n)
&\cong H^1(\H_n|[0_+,l_-])\\
&\cong H_{k-1-1}(\H_n|[0_+,l_-])\\
&\cong H_{k-2}(\H_n)\\
&\cong H_{k-2}(\Gamma_n,C_c(\R_\Delta,\Z))
\end{align*}
where the second isomorphism is due to Poincar\'e duality. 
When $k=2$, the proof is completed. 
When $k\geq3$, using Poincar\'e duality again, one has 
\[
H_{k-2}(\Gamma_n,C_c(\R_\Delta,\Z))
\cong H_{k-1}(\Gamma_n)
\cong H^1(\Gamma_n), 
\]
which completes the proof. 
\end{proof}

\begin{lemma}\label{H1ofH:rank>2}
When $\rank\Gamma\geq3$, 
$\Gamma\curvearrowright\R_\Delta$ is $H^1$-rigid. 
In other words, the natural homomorphism $\H\to\Gamma$ induces 
an isomorphism between $H^1(\H)$ and $H^1(\Gamma)=\Hom(\Gamma,\Z)$. 
\end{lemma}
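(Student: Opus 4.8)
My plan is to exhibit $H^1(\H)$ as the limit of the groups $H^1(\H_n)$ and then feed in Lemma \ref{H1ofHn}(1). First I would pass to a cofinal tail of the exhaustion so as to assume $\rank\Gamma_n\geq3$ for every $n$: three $\Q$-linearly independent elements of $\Gamma$ already lie in some $\Gamma_N$, so $\rank\Gamma_n\geq3$ for $n\geq N$, and deleting the first $N$ terms affects neither $\bigcup_n\Gamma_n=\Gamma$ nor the arguments below. Since $\Gamma_n$ is a discrete (hence clopen) subgroup of $\Gamma_{n+1}$, the groupoid $\H_n=\R_\Delta\rtimes\Gamma_n$ is a clopen, open subgroupoid of $\H_{n+1}$ and of $\H$, all sharing the unit space $\R_\Delta$; I write $\pi:\H\to\Gamma$ and $\pi_n:\H_n\to\Gamma_n$ for the natural homomorphisms, so that $\pi|\H_n=\pi_n$. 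As $\rank\Gamma_n\geq2$, the group $\Gamma_n$ is dense in $\R$, whence $\Gamma_n\curvearrowright\R_\Delta$ is minimal; in particular every $\Gamma_n$-invariant function in $C(\R_\Delta,\Z)$ is constant. By Lemma \ref{H1ofHn}(1) each $H^1(\pi_n):H^1(\Gamma_n)\to H^1(\H_n)$ is an isomorphism, and $H^1(\Gamma_n)=\Hom(\Gamma_n,\Z)$.

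For injectivity of $H^1(\pi)$, suppose $\chi\in\Hom(\Gamma,\Z)=H^1(\Gamma)$ has $H^1(\pi)([\chi])=[\chi\circ\pi]=0$. Restricting the defining coboundary to each $\H_n$ gives $[(\chi|\Gamma_n)\circ\pi_n]=0$ in $H^1(\H_n)$, so injectivity of $H^1(\pi_n)$ yields $\chi|\Gamma_n=0$ for every $n$; as $\Gamma=\bigcup_n\Gamma_n$, this forces $\chi=0$.

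For surjectivity, let $c:\H\to\Z$ be a $1$-cocycle. For each $n$ the class $[c|\H_n]\in H^1(\H_n)$ equals $H^1(\pi_n)([\chi_n])=[\chi_n\circ\pi_n]$ for a unique $\chi_n\in\Hom(\Gamma_n,\Z)$. Restricting the identity $[c|\H_{n+1}]=[\chi_{n+1}\circ\pi_{n+1}]$ along $\H_n\subset\H_{n+1}$, using $\pi_{n+1}|\H_n=\pi_n$ and the uniqueness just noted, I get $\chi_{n+1}|\Gamma_n=\chi_n$; hence the $\chi_n$ assemble into a single $\chi\in\Hom(\Gamma,\Z)$ with $\chi|\Gamma_n=\chi_n$. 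It remains to show that $c':=c-\chi\circ\pi$ is a coboundary on $\H$. By construction $c'|\H_n$ is a coboundary, say $c'|\H_n=\delta^0 f_n$ with $f_n\in C(\R_\Delta,\Z)$.

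The crux is to patch the potentials $f_n$. For each $n$ one has $\delta^0(f_{n+1}-f_n)=0$ on $\H_n$, so $f_{n+1}-f_n$ is $\Gamma_n$-invariant, hence constant by minimality; thus all the $f_n$ differ from $f_1$ by constants, and therefore $\delta^0 f_1=\delta^0 f_n=c'$ on every $\H_n$. Since $\H=\bigcup_n\H_n$, this gives $c'=\delta^0 f_1$, so $[c]=[\chi\circ\pi]=H^1(\pi)([\chi])$, proving surjectivity and hence that $H^1(\pi)$ is an isomorphism. The main obstacle is exactly this patching step: the coboundary potentials are only well defined up to $\Gamma_n$-invariant functions, and gluing them across the exhaustion is possible precisely because minimality collapses that ambiguity to constants. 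The rank hypothesis enters twice here, through Lemma \ref{H1ofHn}(1) and through the density of each $\Gamma_n$ that secures minimality.
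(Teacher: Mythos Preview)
Your argument is correct and follows essentially the same approach as the paper: reduce to the finitely generated levels via Lemma~\ref{H1ofHn}(1), glue the homomorphisms $\chi_n$ by uniqueness, and patch the coboundary potentials using minimality of each $\Gamma_n\curvearrowright\R_\Delta$. The only cosmetic difference is that the paper normalizes the potentials by $f_n(0_+)=0$ to force $f_n=f_{n+1}$ literally, whereas you observe that the $f_n$ differ only by constants and hence share the same coboundary; the content is identical.
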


\begin{proof}
We may assume $\rank\Gamma_n\geq3$ for all $n\in\N$. 
Let $\xi:\H\to\Z$ be a homomorphism. 
Let $\xi_n$ be the restriction of $\xi$ to $\H_n$. 
It follows from Lemma \ref{H1ofHn} (1) that 
there exists $\eta_n\in\Hom(\Gamma_n,\Z)$ 
such that $\xi_n-\eta_n\circ\pi_n$ is a coboundary, 
where $\pi_n:\H_n\to\Gamma_n$ is the natural homomorphism. 
Let $d_n:C(\R_\Delta,\Z)\to C(\H_n,\Z)$ be 
the coboundary operator, i.e.\ 
\[
d_n(f)(g):=f(r(g))-f(s(g))\quad\forall g\in\H_n. 
\]
Define $d:C(\R_\Delta,\Z)\to C(\H,\Z)$ in the same way. 
The action of $\Gamma_n$ on $\R_\Delta$ is minimal, 
and so $\Ker d_n$ consists of constant functions. 
Therefore, for each $n\in\N$, 
there exists a unique $f_n\in C(\R_\Delta,\Z)$ such that 
$\xi_n=\eta_n\circ\pi_n+d_n(f_n)$ and $f_n(0_+)=0$. 
Since the diagram 
\[
\xymatrix@M=8pt{
H^1(\H_n) & 
H^1(\Gamma_n) \ar[l]_{\cong} & 
\Hom(\Gamma_n,\Z)  \ar[l]_{\cong} \\
H^1(\H_{n+1})  \ar[u] & 
H^1(\Gamma_{n+1}) \ar[l]_{\cong} \ar[u] & 
\Hom(\Gamma_{n+1},\Z) \ar[l]_{\cong} \ar[u] 
}
\]
is commutative, 
$\eta_n\in\Hom(\Gamma_n,\Z)$ is equal to 
the restriction of 
$\eta_{n+1}\in\Hom(\Gamma_{n+1},\Z)$ to $\Gamma_n$. 
It follows that 
there exists $\eta\in\Hom(\Gamma,\Z)$ such that 
$\eta_n$ is the restriction of $\eta$ to $\Gamma_n$. 
Moreover, for all $g\in\H_n$, one has 
\begin{align*}
d_n(f_n)(g)
&=\xi_n(g)-\eta_n(\pi_n(g))\\
&=\xi_{n+1}(g)-\eta_{n+1}(\pi_{n+1}(g))\\
&=d_{n+1}(f_{n+1})(g)=d_n(f_{n+1})(g). 
\end{align*}
Hence we obtain $f_n=f_{n+1}$. 
Thus, for all $g\in\H_n$, 
\[
\xi(g)=\xi_n(g)=\eta_n(\pi_n(g))+d_n(f_n)
=\eta(\pi(g))+d(f_1), 
\]
where $\pi:\H\to\Gamma$ is the natural homomorphism. 
Consequently, 
$H^1(\pi):H^1(\Gamma)\to H^1(\H)$ is an isomorphism. 
\end{proof}

When the rank of $\Gamma$ is two, 
$H^1(\H_n)$ may be larger than $H^1(\Gamma_n)\cong\Z^2$. 
Nevertheless, with some extra effort, 
one can establish the $H^1$-rigidity of $\Gamma\curvearrowright\R_\Gamma$. 

\begin{lemma}\label{H1ofH:rank=2}
When $\rank\Gamma=2$, 
$\Gamma\curvearrowright\R_\Gamma$ is $H^1$-rigid. 
In other words, the natural homomorphism $\H\to\Gamma$ induces 
an isomorphism between $H^1(\H)$ and $H^1(\Gamma)=\Hom(\Gamma,\Z)$. 
\end{lemma}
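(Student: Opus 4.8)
The plan is to realize $\H$ as the increasing union $\bigcup_m\H_m$, where we may assume every $\Gamma_m$ is free of rank two, and to compute $H^1(\H)$ as an inverse limit of the finite-level groups $H^1(\H_m)$ furnished by Lemma \ref{H1ofHn} (2). Since $\H_m$ is clopen in $\H_{m+1}$, the cochain complexes satisfy $C^\bullet(\H,\Z)=\varprojlim_m C^\bullet(\H_m,\Z)$, and the restriction maps are surjective (extend by zero), hence Mittag--Leffler. Because $\H_m$ is minimal, $H^0(\H_m)=\Z$ with identity transition maps, so the $\varprojlim^1$ term in the associated Milnor sequence vanishes and
\[
H^1(\H)\cong\varprojlim_m H^1(\H_m).
\]
The task is then to understand this inverse system and to show its limit is $\Hom(\Gamma,\Z)$, with the limiting isomorphism induced by $\pi$.

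First I would promote Lemma \ref{H1ofHn} (2) to a \emph{natural} short exact sequence. Feeding \eqref{ses} into the homology long exact sequence as in the proof of Lemma \ref{H1ofHn}, and using $\Delta=\Gamma$ so that $\Delta/\Gamma_m=\Gamma/\Gamma_m$, one gets
\[
0\to H_1(\Gamma_m)\to H_0(\Gamma_m,C_c(\R_\Gamma,\Z))\to I_m\to0,
\]
where $I_m$ is the augmentation ideal of $\Z[\Gamma/\Gamma_m]$. Composing with the Poincar\'e duality isomorphism $H^1(\H_m)\cong H_0(\H_m)=H_0(\Gamma_m,C_c(\R_\Gamma,\Z))$ and using $H_1(\Gamma_m)\cong H^1(\Gamma_m)$, this becomes
\[
0\to H^1(\Gamma_m)\xrightarrow{H^1(\pi_m)}H^1(\H_m)\to I_m\to0,
\]
where I must check that the subgroup $H_1(\Gamma_m)$ is carried precisely onto $\Im H^1(\pi_m)$. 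I would verify this through naturality of the cap product (Proposition \ref{capproduct}) with respect to $\H_m\to\Gamma_m$, together with injectivity of $H^1(\pi_m)$ (from minimality on a compact reduction, where a nonzero homomorphism cannot be a bounded coboundary).

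Next I would identify the transition maps. Under $\iota:\H_m\hookrightarrow\H_{m+1}$, the restriction $H^1(\iota)$ respects both short exact sequences: on the subgroups it is ordinary restriction $\Hom(\Gamma_{m+1},\Z)\to\Hom(\Gamma_m,\Z)$, whose limit is $\Hom(\Gamma,\Z)=H^1(\Gamma)$; on the quotients it is, via Proposition \ref{capproduct} applied to $\iota$ together with the comparison of fundamental classes $H_1(\iota)([\H_m])$ and $[\H_{m+1}]$, the transfer (unfolding) map $I_{m+1}\to I_m$ sending each coset in $\Gamma/\Gamma_{m+1}$ to the sum of the cosets of $\Gamma/\Gamma_m$ lying over it. The decisive computation is $\varprojlim_m I_m=0$: a coherent family $(w_m)$ has coefficient functions on $\Gamma/\Gamma_m$ that are pulled back along $\Gamma/\Gamma_m\to\Gamma/\Gamma_{m+1}$, so a single nonzero coefficient of some $w_{m_0}$ persists on every fibre, forcing $\operatorname{supp}(w_{m_0})$ to have size at least $[\Gamma_{m'}:\Gamma_{m_0}]$ for all $m'$; as $\Gamma$ is not finitely generated this index tends to infinity, contradicting compact support. (When $\Gamma$ is finitely generated of rank two one has $\Gamma_m=\Gamma$ eventually and $I_m=0$, so the step is vacuous.)

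Granting $\varprojlim_m I_m=0$, left exactness of $\varprojlim$ on the inverse system of short exact sequences yields $\varprojlim_m H^1(\H_m)\cong\varprojlim_m H^1(\Gamma_m)=\Hom(\Gamma,\Z)$, with the isomorphism induced by $\pi$; combined with $H^1(\H)\cong\varprojlim_m H^1(\H_m)$ this is the claim. Equivalently, for a given cocycle $\xi:\H\to\Z$ the vanishing of $\varprojlim_m I_m$ forces the quotient invariant of each $[\xi|_{\H_m}]$ to be zero, so $[\xi|_{\H_m}]\in\Im H^1(\pi_m)$ for all $m$; one may then replay the argument of Lemma \ref{H1ofH:rank>2} verbatim (compatible $\eta_m\in\Hom(\Gamma_m,\Z)$ assembling to $\eta\in\Hom(\Gamma,\Z)$, with coincident primitives $f_m$) to write $\xi=\eta\circ\pi+d(f)$, giving surjectivity; injectivity is immediate, since each $H^1(\pi_m)$ is injective and a coboundary $\eta\circ\pi$ restricts to $\eta|_{\Gamma_m}\circ\pi_m$, forcing $\eta|_{\Gamma_m}=0$ for all $m$. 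I expect the main obstacle to be the finite-level naturality, namely matching the duality image with $\Im H^1(\pi_m)$ and, above all, pinning the quotient transition maps down as the transfer maps; both rest on the naturality of the cap product (Proposition \ref{capproduct}) and on the precise comparison of the fundamental classes $[\H_m]$ and $[\H_{m+1}]$ under the finite-index inclusion. Once these maps are identified, the compact-support argument makes $\varprojlim_m I_m=0$ essentially automatic.
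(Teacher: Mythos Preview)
Your inverse-limit framing is sound and rests on the same ingredients as the paper—the short exact sequence coming from Lemma~\ref{H1ofHn} and the cap-product naturality of Proposition~\ref{capproduct}—but the obstacle you flag at the end is real, and the identification of the quotient transition map $I_{m+1}\to I_m$ as the transfer is not actually delivered by those tools. Applying Proposition~\ref{capproduct} to $\iota:\H_m\hookrightarrow\H_{m+1}$ together with $H_1(\iota)(c_m)=[\Gamma_{m+1}:\Gamma_m]\,c_{m+1}$ gives only
\[
H_0(\iota)\bigl(c_m\frown H^1(\iota)([\xi])\bigr)
=[\Gamma_{m+1}:\Gamma_m]\cdot\bigl(c_{m+1}\frown[\xi]\bigr),
\]
i.e.\ $(\text{pushforward})\circ\psi''=[\Gamma_{m+1}:\Gamma_m]\cdot\id$ on $I_{m+1}$. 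Since the pushforward $I_m\to I_{m+1}$ has large kernel, this relation does not force $\psi''$ to be the transfer, and your support-growth argument for $\varprojlim I_m=0$ uses the transfer form explicitly.

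The paper avoids computing $\psi''$ altogether. It fixes $\xi$ and applies Proposition~\ref{capproduct} directly to $\iota_n:\H_1\to\H_n$, obtaining $m_n\cdot(c_n\frown[\xi_n])=H_0(\iota_n)(c_1\frown[\xi_1])$. The point is that $c_1\frown[\xi_1]$ involves only \emph{finitely many} coset representatives $T_1'\subset\Gamma$; once $n$ is large enough that $T_1'\subset\Gamma_n$, the right-hand side collapses into the $\Gamma_n$-summand of $H_0(\H_n)$, so its image in $I_n$ is zero. Torsion-freeness of $I_n$ then kills the $I_n$-component of $c_n\frown[\xi_n]$, yielding $[\xi_n]\in\Im H^1(\pi_n)$, after which the endgame of Lemma~\ref{H1ofH:rank>2} applies verbatim. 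In effect the paper uses the very relation you derived, but on a single element rather than on the whole transition map. You can patch your argument the same way: iterate the displayed relation to get $(\text{pushforward}_{1\to n})(w_1)=[\Gamma_n:\Gamma_1]\cdot w_n$, observe that the left side vanishes once the finite support of $w_1$ is absorbed into $\Gamma_n$, and conclude $w_n=0$ by torsion-freeness—no identification of $\psi''$ needed.
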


\begin{proof}
We may assume $\rank\Gamma_n=2$ (i.e.\ $\Gamma_n\cong\Z^2$) 
for all $n\in\N$. 
By Lemma \ref{H1ofHn} (2), 
$H^1(\H_n)$ is isomorphic to 
$H_0(\H_n)\cong H_0(\Gamma_n,C_c(\R_\Gamma,\Z))$. 
Moreover, by the proof of Lemma \ref{H1ofHn} (2), 
there exists a short exact sequence 
\[
\xymatrix@M=8pt{
0 \ar[r] & H_1(\Gamma_n) \ar[r] & 
H_0(\H_n) \ar[r] & 
\Ker\left(\bigoplus_{\Gamma/\Gamma_n}\Z\to\Z\right) \ar[r] & 0. 
}
\]
Let $T_n\subset\Gamma$ be 
a set of coset representatives for $\Gamma_n$. 
We assume $0\in T_n$. 
Since $H_1(\Gamma_n)$ is naturally isomorphic to $\Gamma_n$, 
we get the isomorphism  
\[
H_0(\H_n)\cong \Gamma_n\oplus\bigoplus_{T_n\setminus\{0\}}\Z. 
\]
This isomorphism sends $s\in\Gamma_n$ in the right-hand side to 
the equivalence class of the function 
\[
1_{[0_+,\infty)}-1_{[s_+,\infty)}\in C_c(\R_\Gamma,\Z), 
\]
and sends $1$ of the $t\in T_n\setminus\{0\}$ summand to 
the equivalence class of the function 
\[
1_{[0_+,\infty)}-1_{[t_+,\infty)}\in C_c(\R_\Gamma,\Z). 
\]
Next, let us look at the isomorphism $H^1(\H_n)\cong H_0(\H_n)$. 
Consider the cap product 
\[
\frown\ :H_1(\H_n)\times H^1(\H_n)\to H_0(\H_n). 
\]
Pick a generator $c_n\in H_1(\H_n)\cong H_2(\Gamma_n)\cong\Z$. 
Then the cap product 
\[
c_n\frown\cdot:H^1(\H_n)\to H_0(\H_n)
\]
gives the isomorphism. 

Suppose that a homomorphism $\xi:\H\to\Z$ is given. 
Let $\xi_n$ be the restriction of $\xi$ to $\H_n$. 
There exist $s\in\Gamma_1$, 
a finite subset $T'_1\subset T_1\setminus\{0\}$ 
and integers $(a_t)_{t\in T'_1}$ such that 
\[
c_1\frown[\xi_1]
=\left[1_{[0_+,\infty)}-1_{[s_{+},\infty)}\right]
+\sum_{t\in T'_1}a_t\left[1_{[0_+,\infty)}-1_{[t_+,\infty)}\right]
\in H_0(\H_1). 
\]
Let $\iota_n:\H_1\to\H_n$ be the inclusion map, 
so that $H^1(\iota_n)([\xi_n])=[\xi_1]$. 
There exists $m_n\in\Z$ such that $H_1(\iota_n)(c_1)=m_nc_n$. 
Notice that $\Gamma_1$ has index $\lvert m_n\rvert$ in $\Gamma_n$. 
By virtue of Proposition \ref{capproduct}, we have 
\begin{align*}
m_n\cdot\left(c_n\frown[\xi_n]\right)
&=H_1(\iota_n)(c_1)\frown[\xi_n] \\
&=H_0(\iota_n)\left(c_1\frown H^1(\iota_n)([\xi_n])\right) \\
&=H_0(\iota_n)\left(c_1\frown[\xi_1]\right). 
\end{align*}
Suppose that $n$ is large enough so that $T'_1\subset\Gamma_n$. 
Then one has 
\[
s':=s+\sum_{t\in T'_1}a_tt\in\Gamma_n, 
\]
and so 
\[
H_0(\iota_n)\left(c_1\frown[\xi_1]\right)
=\left[1_{[0_+,\infty)}-1_{[s'_{+},\infty)}\right]
\in\Gamma_n\cong H_1(\Gamma_n). 
\]
Hence $c_n\frown[\xi_n]$ lives 
in the $\Gamma_n$ component of $H_0(\H_n)$, 
which means that $[\xi_n]$ comes from $H^1(\Gamma_n)$. 
So, $\xi_n$ is cohomologous to a homomorphism $\H_n\to\Gamma_n\to\Z$. 
The rest of the proof is exactly the same 
as that of Lemma \ref{H1ofH:rank>2}. 
Since the action of $\Gamma_n$ on $\R_\Gamma$ is minimal, 
we may conclude that $\xi:\H\to\Z$ is cohomologous to 
a homomorphism $\H\to\Gamma\to\Z$. 
\end{proof}

\begin{remark}
When $\rank\Gamma=1$, the rank of $\Gamma_n$ is also one. 
In this case the action of $\Gamma_n$ is not minimal, 
and the choice of the function $f_n$ is not unique 
in the proof of Lemma \ref{H1ofH:rank>2}. 
Indeed, $\H=\R_\Delta\rtimes\Gamma$ is an AF groupoid, 
and $H^1(\H)$ becomes much larger. 
\end{remark}

\begin{remark}
In \cite{Ta2304arXiv}, it was shown that 
for any countable dense subgroups $\Gamma_1$ and $\Gamma_2$ of $\R$, 
$\R_{\Gamma_i}\rtimes\Gamma_i$ are isomorphic to each other 
if and only if $\Gamma_i\curvearrowright\R_{\Gamma_i}$ are conjugate. 
\end{remark}

\section{Classification of Stein's groups}

In this section, we would like to give 
a classification result for Stein's group. 

Let $\Lambda\subset(0,\infty)$ be 
a finitely generated multiplicative subgroup 
isomorphic to $\Z^N$ as abelian groups. 
We let $\Gamma\subset\R$ be a countable $\Z\Lambda$-module, 
which is dense in $\R$. 
The groups $\Gamma$ and $\Gamma\rtimes\Lambda$ act on $\R_\Gamma$ 
as in Section 3. 
Let $\H:=\R_\Gamma\rtimes\Gamma$ be the transformation groupoid 
and let 
\[
\S=\S(\Gamma,\Lambda):=\R_\Gamma\rtimes(\Gamma\rtimes\Lambda)
=\H\rtimes\Lambda
\]
be the Stein groupoid (Definition \ref{Steingroupoid}). 
As mentioned in Section 3, 
the groupoid $\H$ admits a unique invariant measure 
up to scalar multiplication. 
We denote by $\pi:\S\to\Gamma\rtimes\Lambda$ the natural homomorphism. 

\begin{proposition}\label{Sisrigid}
Suppose $\rank\Gamma\geq2$. 
The action $\Gamma\rtimes\Lambda\curvearrowright\R_\Gamma$ is $H^1$-rigid, 
that is, $\pi:\S\to\Gamma\rtimes\Lambda$ induces 
an isomorphism between $H^1(\S)$ and 
$H^1(\Gamma\rtimes\Lambda)=\Hom(\Gamma\rtimes\Lambda,\Z)$. 
\end{proposition}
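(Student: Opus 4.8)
The plan is to prove $H^1$-rigidity by induction on $N=\rank\Lambda$, using Proposition \ref{H1ofsemidirect} as the inductive engine and the $H^1$-rigidity of the $\Gamma$-action as the base case. Since $\Lambda\cong\Z^N$, I would fix a free generating set $\lambda_1,\dots,\lambda_N$ and set $\Lambda_k:=\langle\lambda_1,\dots,\lambda_k\rangle\cong\Z^k$ and $G_k:=\Gamma\rtimes\Lambda_k$, so that $G_0=\Gamma$ and $G_N=\Gamma\rtimes\Lambda$. Each $G_k$ acts on $\R_\Gamma$ by restricting the given $\Gamma\rtimes\Lambda$-action, and the statement to be proved is exactly that $G_N\curvearrowright\R_\Gamma$ is $H^1$-rigid, which by Definition \ref{H1rigidity} is the asserted isomorphism induced by $\pi$.

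For the base case $G_0=\Gamma$, the hypothesis $\rank\Gamma\ge2$ lets me invoke Lemmas \ref{H1ofH:rank>2} and \ref{H1ofH:rank=2}, which together give that $\Gamma\curvearrowright\R_\Gamma$ is $H^1$-rigid; moreover, since $\Gamma$ is dense in $\R$, this action is minimal. For the inductive step, I want to realise $G_{k+1}$ as a semi-direct product of $G_k$ by $\Z$. Because $\Lambda$ is abelian and free, $\Lambda_{k+1}=\Lambda_k\times\langle\lambda_{k+1}\rangle$, and conjugation by $\lambda_{k+1}$ inside $G_{k+1}$ restricts to an automorphism $\theta\in\Aut(G_k)$ which multiplies the $\Gamma$-coordinate by $\lambda_{k+1}$ and fixes $\Lambda_k$; one then checks $G_{k+1}\cong G_k\rtimes_\theta\Z$, with the $\Gamma\rtimes\Lambda$-action restricting compatibly with this decomposition. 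Assuming inductively that $G_k\curvearrowright\R_\Gamma$ is minimal and $H^1$-rigid, Proposition \ref{H1ofsemidirect} applied to $G_k$, $\theta$ and $X=\R_\Gamma$ yields that $G_{k+1}\curvearrowright\R_\Gamma$ is $H^1$-rigid. Minimality of the $G_{k+1}$-action is immediate because $G_{k+1}\supset\Gamma$ and the $\Gamma$-action is already minimal, so the inductive hypotheses are restored and the induction closes at $k=N$.

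The steps I expect to be routine are the base case (handed to us by the earlier lemmas) and the propagation of minimality. The part requiring genuine care is the identification $G_{k+1}\cong G_k\rtimes_\theta\Z$: one must verify that $\theta$ really is an automorphism of $G_k$ (which uses that $\Lambda$ is abelian, so $\lambda_{k+1}$ centralises $\Lambda_k$) and that a single free generator of $\Lambda$ may be peeled off as a direct $\Z$-factor (which uses $\Lambda\cong\Z^N$). This is precisely where the hypothesis that $\Lambda$ is finitely generated and free is essential, and it is the main bookkeeping obstacle; once it is in place, Proposition \ref{H1ofsemidirect} performs all the cohomological work and the conclusion $H^1(\S)\cong H^1(\Gamma\rtimes\Lambda)=\Hom(\Gamma\rtimes\Lambda,\Z)$ follows.
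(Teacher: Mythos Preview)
Your proposal is correct and follows essentially the same approach as the paper: invoke Lemmas~\ref{H1ofH:rank>2} and~\ref{H1ofH:rank=2} for the base case, then repeatedly apply Proposition~\ref{H1ofsemidirect} to peel off one $\Z$-factor of $\Lambda\cong\Z^N$ at a time. The paper simply compresses your induction into the phrase ``repeated use of Proposition~\ref{H1ofsemidirect},'' whereas you have spelled out the intermediate groups $G_k=\Gamma\rtimes\Lambda_k$ and the verification that $G_{k+1}\cong G_k\rtimes_\theta\Z$.
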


\begin{proof}
By Lemma \ref{H1ofH:rank>2} and Lemma \ref{H1ofH:rank=2}, 
$\Gamma\curvearrowright\R_\Gamma$ is $H^1$-rigid. 
Since $\S$ is a semi-direct product of $\H$ by $\Lambda$ and 
$\Lambda$ is isomorphic to $\Z^N$, 
the conclusion follows 
from the repeated use of Proposition \ref{H1ofsemidirect}. 
\end{proof}

It is easy to verify that 
$H^1(\Gamma\rtimes\Lambda)=\Hom(\Gamma\rtimes\Lambda,\Z)$ is 
isomorphic to the direct sum of $H^1(\Lambda)\cong\Z^N$ and 
\[
H^1(\Gamma)^\Lambda:=\{\tau\in\Hom(\Gamma,\Z)\mid
\text{$\tau$ is invariant under the action of $\Lambda$}\}. 
\]

\begin{example}\label{algebraic}
Let us consider the case 
where $\Lambda\setminus\{1\}$ contains an algebraic number. 
Suppose that $\tau\in\Hom(\Gamma,\Z)$ is 
invariant by the action of $\Lambda$. 
Pick an algebraic number $\lambda\in\Lambda\setminus\{1\}$ and 
let $f(x)$ be its minimal polynomial over $\Z$. 
For any $t\in\Gamma$ and $k\in\N$, 
one has $\tau(t)=\tau(t\lambda^k)$. 
Therefore 
\[
0=\tau(tf(\lambda))=\tau(t)f(1), 
\]
which implies $\tau(t)=0$ because $1$ is not a root of $f$. 
Thus there exists no such $\tau$ except for zero. 
Thus, we have $H^1(\Gamma\rtimes\Lambda)=H^1(\Lambda)\cong\Z^N$. 
\end{example}

When $\Lambda\setminus\{1\}$ consists only of transcendental numbers, 
$H^1(\Gamma)^\Lambda$ may be non-trivial. 

\begin{example}\label{transcendental}
Let $\lambda>0$ be a transcendental number. 
Let $\Lambda:=\langle\lambda\rangle$ and 
$\Gamma:=\Z[\lambda,\lambda^{-1}]$. 
We can define $\tau\in H^1(\Gamma)^\Lambda$ 
by $\tau(\lambda^k)=1$ for all $k\in\Z$. 
Thus, $H^1(\Gamma)^\Lambda$ is isomorphic to $\Z$. 
\end{example}

Define 
\[
\Gamma_0:=\left\{t\in\Gamma\mid
\tau(t)=0\quad\forall\tau\in H^1(\Gamma)^\Lambda\right\}. 
\]
When $\Lambda\setminus\{1\}$ contains an algebraic number, 
$\Gamma_0$ equals $\Gamma$, because $H^1(\Gamma)^\Lambda=0$. 
When $\Lambda\setminus\{1\}$ consists only of transcendental numbers, 
$t-\lambda t$ is in $\Gamma_0$ 
for any $t\in\Gamma$ and $\lambda\in\Lambda\setminus\{1\}$. 
Either way, the action of $\Gamma_0$ on $\R_\Gamma$ is minimal. 
We remark that, 
for any $\zeta\in\Hom(\Gamma\rtimes\Lambda,\Z^N)$, 
the subgroupoid $\Ker(\zeta\circ\pi)\subset\S$ contains 
$\R_\Gamma\rtimes\Gamma_0$. 
So, if $\zeta$ is surjective, 
then the skew product $\S\times_{\zeta\circ\pi}\Z^N$ is minimal 
by Lemma \ref{skewisminimal}. 

\begin{lemma}\label{conditiononzeta}
Suppose $\rank\Gamma\geq2$. 
For $\zeta\in\Hom(\Gamma\rtimes\Lambda,\Z^N)$, 
the following conditions are equivalent. 
\begin{enumerate}
\item The subgroupoid $\Ker(\zeta\circ\pi)\subset\S$ admits 
an invariant measure. 
\item $\zeta$ factors through $\Lambda$ and $\zeta|\Lambda$ is injective. 
In other words, $\Ker\zeta=\Gamma\times\{1\}$. 
\item $\Ker(\zeta\circ\pi)=\H$. 
\end{enumerate}
\end{lemma}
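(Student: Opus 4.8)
The plan is to run the implications $(2)\Leftrightarrow(3)$, $(3)\Rightarrow(1)$, and $(1)\Rightarrow(2)$, with essentially all the work in the last one. For $(2)\Leftrightarrow(3)$ I would simply observe that $\pi$ is surjective and $\pi(\H)=\Gamma\times\{1\}$, so $\Ker(\zeta\circ\pi)=\pi^{-1}(\Ker\zeta)$, and hence $\Ker(\zeta\circ\pi)=\H=\pi^{-1}(\Gamma\times\{1\})$ holds exactly when $\Ker\zeta=\Gamma\times\{1\}$, which is the content of (2). The implication $(3)\Rightarrow(1)$ is then immediate, since $\H=\R_\Gamma\rtimes\Gamma$ carries the $\H$-invariant Lebesgue measure $m$ recalled in Section 3.

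For $(1)\Rightarrow(2)$, write $\mathcal K:=\Ker(\zeta\circ\pi)$ and let $\mu\neq0$ be a $\mathcal K$-invariant measure. The crucial first step is to show that $\mu$ is a scalar multiple of $m$. Here I would use that $\zeta|_\Gamma$ is automatically $\Lambda$-invariant (because $(0,\lambda)(t,1)(0,\lambda)^{-1}=(\lambda t,1)$ and $\Z^N$ is abelian), so that $\mathcal K\supseteq\R_\Gamma\rtimes\Gamma_0$ as already remarked, and $\Gamma_0$ acts minimally, hence is dense in $\R$. Thus $\mu$ is invariant under the dense group of translations $\Gamma_0$. Pushing $\mu$ forward along $q:\R_\Gamma\to\R$ gives a locally finite $\Gamma_0$-invariant Borel measure $\nu$ on $\R$; local finiteness together with the density of $\Gamma_0$ forces $\nu$ to be atomless, so its distribution function $F$ is continuous and satisfies $F(x+t)=F(x)+F(t)$ for all $t\in\Gamma_0$. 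A continuous monotone solution of this Cauchy equation on a dense subgroup is linear, whence $\nu$, and therefore $\mu$, is a multiple of $m$.

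Once $\mu=cm$ with $c>0$, I would test invariance against the elements of $\Ker\zeta$ carrying a nontrivial slope. For $(t,\lambda)\in\Ker\zeta$ and a compact open $Y\subset\R_\Gamma$, the bisection $Y\times\{(t,\lambda)\}$ lies in $\mathcal K$ and has source $\lambda^{-1}(Y-t)$, on which $m$ is scaled by $\lambda^{-1}$; invariance of $\mu=cm$ then forces $\lambda=1$, so $\Ker\zeta\subseteq\Gamma\times\{1\}$. It remains to upgrade this to $\zeta|_\Gamma=0$: if $\zeta|_\Gamma\neq0$ then $M:=\Im(\zeta|_\Gamma)$ is a nonzero, hence rank $\geq1$, subgroup of $\Z^N$, so $\Z^N/M$ has torsion-free rank $<N$ and the composite $\Lambda\cong\Z^N\xrightarrow{\zeta|_\Lambda}\Z^N\to\Z^N/M$ must have nontrivial kernel; any $\lambda\neq1$ in this kernel yields $t\in\Gamma$ with $\zeta|_\Gamma(t)=-\zeta|_\Lambda(\lambda)$, i.e.\ $\zeta(t,\lambda)=0$, contradicting $\Ker\zeta\subseteq\Gamma\times\{1\}$. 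Therefore $\zeta|_\Gamma=0$, so $\Ker\zeta=\Gamma\times\Ker(\zeta|_\Lambda)$, and the inclusion $\Ker\zeta\subseteq\Gamma\times\{1\}$ forces $\zeta|_\Lambda$ to be injective. This is precisely (2).

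I expect the measure-rigidity step—showing $\mu$ is proportional to $m$—to be the main obstacle, since $\Gamma_0$ may have rank $1$ and one cannot simply invoke the uniqueness of the $\H$-invariant measure for the smaller subgroupoid $\R_\Gamma\rtimes\Gamma_0$. The distribution-function argument sidesteps this by using only the density of $\Gamma_0$ and the atomlessness coming from local finiteness. The concluding rank-count, though short, is also essential: it is what promotes the measure-theoretic conclusion $\Ker\zeta\subseteq\Gamma\times\{1\}$ to the full statement $\Ker\zeta=\Gamma\times\{1\}$.
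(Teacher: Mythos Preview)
Your argument is correct and matches the paper's: both show that the invariant measure for $\Ker(\zeta\circ\pi)$ must be Lebesgue (the paper tersely, relying on the unique ergodicity of $\R_\Gamma\rtimes\Gamma_0$; you explicitly, via the distribution-function argument), deduce $\lambda=1$ for every $(t,\lambda)\in\Ker\zeta$ from the scaling relation, and then finish with an essentially identical finite-index/rank count in $\Z^N$. One minor correction: your concern that $\Gamma_0$ might have rank $1$ is unnecessary, since for any $\lambda\in\Lambda\setminus\{1\}$ one has $(1-\lambda)\Gamma\subseteq\Gamma_0$ and multiplication by $1-\lambda\neq0$ preserves rank, so $\rank\Gamma_0\geq\rank\Gamma\geq2$; this does not affect the validity of your argument, only the framing of where the difficulty lies.
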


\begin{proof}
(1)$\implies$(2)\:
As observed above, 
the subgroupoid $\Ker(\zeta\circ\pi)$ contains $\R_\Gamma\rtimes\Gamma_0$, 
which has an invariant measure $\mu$. 
Let $g=(x,t,\lambda)\in\Ker(\zeta\circ\pi)$. 
Choose a compact open neighborhood $O\subset\R_\Gamma$ of $x$, 
and set $U:=O\times\{(t,\lambda)\}$. 
Then $U$ is a bisection in $\Ker(\zeta\circ\pi)$ 
such that $\mu(s(U))=\lambda^{-1}\mu(r(U))$. 
Hence we get $\lambda=1$. 
In particular, the homomorphism 
\[
\Lambda\ni\lambda\mapsto\zeta(0,\lambda)\in\Z^N
\]
is injective, and so its image is of finite index in $\Z^N$. 

What remains to be proved is that $\zeta$ factors through $\Lambda$. 
Assume that 
$(t,1)\in\Gamma\rtimes\Lambda$ does not belong to $\Ker\zeta$. 
Since the image of $\zeta|\Lambda$ is of finite index, 
there exists $m\in\N$ and $\lambda\in\Lambda$ 
such that $\zeta(mt,1)=\zeta(0,\lambda)$. 
Thus, $\zeta(mt,\lambda^{-1})=0$, which implies $\lambda=1$. 
Therefore we get $\zeta(mt,1)=0$, which is a contradiction. 

(2)$\implies$(3) and (3)$\implies$(1) are obvious. 
\end{proof}

\begin{definition}
Let $\G$ be an ample groupoid and 
let $\mu$ be a measure on $\G^{(0)}$. 
\begin{enumerate}
\item For $g\in\G$ and $\mu\in(0,\infty)$, we write 
\[
g_*\,\mathrm{d}\mu=\lambda\,\mathrm{d}\mu
\]
if there exists a compact open bisection $U\subset\G$ 
such that $g\in U$ and 
$\mu(r(V))=\lambda\mu(s(V))$ holds 
for all clopen subset $V\subset U$. 
\item We set 
\[
R(\G,\mu):=\{\lambda\in(0,\infty)\mid
g_*\,\mathrm{d}\mu=\lambda\,\mathrm{d}\mu
\quad\text{for some }g\in\G\}. 
\]
and call it the ratio set for $(\G,\mu)$. 
\end{enumerate}
\end{definition}

\begin{remark}
In the definition above, 
it may be better to define the ratio set as 
the intersection of all $R(\G|Y,\mu)$ 
where $Y$ runs over all nonempty clopen subsets of $\G^{(0)}$. 
But we adopt that simplified version here, 
because it is sufficient for our use. 
\end{remark}

\begin{proposition}\label{recoverLambda}
Suppose $\rank\Gamma\geq2$. 
Let $\S=\S(\Gamma,\Lambda)$ be the Stein groupoid. 
Suppose that 
a homomorphism $\xi:\S\to\Z^N$ satisfies the following. 
\begin{enumerate}
\item The subgroupoid $\Ker\xi$ admits an invariant measure. 
\item The essential range of $\xi$ is $\Z^N$. 
\end{enumerate}
Then, $\Ker\xi$ is Kakutani equivalent to $\H$, 
and admits a unique invariant measure up to scalar multiplication. 
Moreover, 
letting $\nu$ be a invariant measure for $\Ker\xi$, 
we have $R(\S,\nu)=\Lambda$. 
\end{proposition}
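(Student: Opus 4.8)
The plan is to use Proposition~\ref{Sisrigid} to translate the hypotheses on $\xi$ into the language of $H^1(\Gamma\rtimes\Lambda)$, then invoke Lemma~\ref{conditiononzeta} and Lemma~\ref{Kakutani} to pin down $\Ker\xi$ up to Kakutani equivalence, and finally read off the ratio set using the scaling behaviour of the $\H$-invariant measure under $\Lambda$. First I would observe that, since $\rank\Gamma\geq2$, Proposition~\ref{Sisrigid} tells us $H^1(\pi):H^1(\Gamma\rtimes\Lambda)\to H^1(\S)$ is an isomorphism. Applying this coordinatewise to the $\Z^N$-valued cocycle $\xi$, there is a homomorphism $\zeta\in\Hom(\Gamma\rtimes\Lambda,\Z^N)$ such that $\xi$ is cohomologous to $\zeta\circ\pi$. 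Hypothesis~(1), that $\Ker\xi$ admits an invariant measure, should transfer to $\Ker(\zeta\circ\pi)$: since $\xi$ and $\zeta\circ\pi$ differ by a coboundary, Lemma~\ref{Kakutani}(1) gives an isomorphism of the skew products $\S\times_\xi\Z^N\cong\S\times_{\zeta\circ\pi}\Z^N$, and the invariant-measure condition is equivalent to an invariant-measure condition on the respective kernel reductions; I would verify that the existence of an invariant measure for $\Ker\xi$ forces one for $\Ker(\zeta\circ\pi)$.

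With condition~(1) of Lemma~\ref{conditiononzeta} verified for $\zeta$, that lemma immediately yields $\Ker(\zeta\circ\pi)=\H$ and that $\zeta$ factors through $\Lambda$ with $\zeta|\Lambda$ injective; in particular $\zeta\circ\pi$ is surjective onto a finite-index subgroup of $\Z^N$, and together with hypothesis~(2) on the essential range I can arrange that $\zeta\circ\pi$ actually has range $\Z^N$ after the coboundary adjustment. Then Lemma~\ref{Kakutani}(2) applies: because the essential range of $\xi$ is all of $\Z^N$ and $\Ker(\zeta\circ\pi)=\H$ is minimal, the skew product $\S\times_{\zeta\circ\pi}\Z^N$ is minimal by Lemma~\ref{skewisminimal}, so the hypothesis of Lemma~\ref{Kakutani}(2) is met and $\Ker\xi$ is Kakutani equivalent to $\Ker(\zeta\circ\pi)=\H$. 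Since $\H$ carries a unique invariant measure up to scaling and Kakutani equivalence preserves the space of invariant measures up to this ambiguity, $\Ker\xi$ likewise admits a unique invariant measure up to scalar multiplication.

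For the final assertion about the ratio set, I would work with the model $\Ker\xi\cong\H$ (up to the Kakutani equivalence just established) and its invariant measure $\nu$, which corresponds to the Lebesgue measure on $\R$ as noted in Section~3. The point is that $\S=\H\rtimes\Lambda$ and the $\Lambda$-action scales the $\H$-invariant measure exactly by the factor $\lambda$: for $g=(x,t,\lambda)\in\S$ and a small bisection $U=O\times\{(t,\lambda)\}$, one computes $\nu(r(U))=\lambda\,\nu(s(U))$ from the fact that translation by $t$ preserves Lebesgue measure while multiplication by $\lambda$ scales it by $\lambda$. This shows $\Lambda\subseteq R(\S,\nu)$. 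Conversely, any element of $\S$ is of this form, so every ratio arising is some $\lambda\in\Lambda$, giving $R(\S,\nu)=\Lambda$. The main obstacle I anticipate is the careful bookkeeping in matching $\nu$, a priori only defined up to scale on $\Ker\xi$, with the genuine $\H$-invariant (Lebesgue) measure so that the scaling computation is literally $g_*\,\mathrm{d}\nu=\lambda\,\mathrm{d}\nu$; the Kakutani equivalence must be checked to intertwine the two invariant measures compatibly, and I would spend most of the effort confirming that the measure-scaling identity descends correctly through that equivalence.
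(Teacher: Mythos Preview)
Your overall strategy is the paper's, but there are two places where the execution diverges, and the second is a genuine gap.

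\textbf{Order of the first part.} You propose to transfer hypothesis~(1) to $\Ker(\zeta\circ\pi)$ first, via the skew-product isomorphism of Lemma~\ref{Kakutani}(1), and only afterwards bring in hypothesis~(2). But Lemma~\ref{Kakutani}(1) only identifies $\Ker\xi$ with the reduction of $\S\times_{\zeta\circ\pi}\Z^N$ at the clopen set $\{(x,f(x)):x\in\S^{(0)}\}$; without minimality of the skew product you have no way to relate this reduction to the reduction at $\S^{(0)}\times\{0\}$, which is $\Ker(\zeta\circ\pi)$. The paper reverses the order: the essential range is a cohomology invariant, so that of $\zeta\circ\pi$ is also $\Z^N$ and $\zeta$ is surjective; the remark preceding Lemma~\ref{conditiononzeta} then gives minimality of $\S\times_{\zeta\circ\pi}\Z^N$, so Lemma~\ref{Kakutani}(2) yields the Kakutani equivalence $\Ker\xi\sim\Ker(\zeta\circ\pi)$; \emph{now} the invariant measure transports, and Lemma~\ref{conditiononzeta} forces $\Ker(\zeta\circ\pi)=\H$. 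This is only a reordering of your ingredients.

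\textbf{Ratio set.} Here your plan does not go through. The Kakutani equivalence $\Ker\xi\sim\H$ relates the two subgroupoids and their invariant measures, but it does not extend to any statement about $\S$ itself, and $R(\S,\nu)$ concerns how all of $\S$ scales $\nu$. Computing ``in the model $\H$ with Lebesgue measure'' yields $R(\S,\mu)=\Lambda$ for the $\H$-invariant $\mu$, which is a different (and easier) assertion; your proposal contains no mechanism for transferring this to the $\Ker\xi$-invariant $\nu$, a genuinely different measure on the same space $\R_\Gamma$. The paper's missing idea is to write $\nu$ explicitly: with $\omega:=(\zeta|\Lambda)^{-1}$ and $f$ the continuous function realizing $\xi-\zeta\circ\pi=\delta^0(f)$, one checks directly that $\mathrm{d}\nu(x)=\omega(f(x))^{-1}\,\mathrm{d}\mu(x)$ is $\Ker\xi$-invariant. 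Because the density takes values in $\Lambda$, the local ratio for any $g=(x,t,\lambda)\in\S$ is $\lambda$ multiplied by a quotient of two values of $\omega\circ f$, hence lies in $\Lambda$; and at $g=(0_+,0,\lambda)$ the density factors cancel, so every $\lambda\in\Lambda$ occurs. This explicit formula is what makes the computation work, and it does not emerge from the Kakutani equivalence alone.
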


\begin{proof}
According to Proposition \ref{Sisrigid}, 
there exists $\zeta\in\Hom(\Gamma\rtimes\Lambda,\Z^N)$ 
such that $\xi$ and $\zeta\circ\pi$ are cohomologous. 
There exists $f\in C(\R_\Gamma,\Z^N)$ such that 
\[
\xi(g)=\zeta(\pi(g))+f(r(g))-f(s(g))
\]
holds for all $g\in\S$. 
Since the essential range of $\xi$ is $\Z^N$, 
the same is true for $\zeta\circ\pi$, 
and so $\zeta$ is surjective. 
Hence the skew product $\S\times_{\zeta\circ\pi}\Z^N$ is minimal. 
It follows from Lemma \ref{Kakutani} (2) that 
$\Ker\xi$ and $\Ker(\zeta\circ\pi)$ are Kakutani equivalent. 
In particular, $\Ker(\zeta\circ\pi)$ admits an invariant measure. 
Thanks to Lemma \ref{conditiononzeta}, 
$\zeta$ factors through $\Lambda$, 
$\zeta|\Lambda$ is an isomorphism from $\Lambda$ to $\Z^N$ 
and $\Ker(\zeta\circ\pi)=\H$. 
In particular, 
$\Ker(\zeta\circ\pi)=\H$ has a unique invariant measure, 
and so does $\Ker\xi$. 

Let $\mu$ be an $\H$-invariant measure on $\R_\Gamma$. 
We would like to find a $\Ker\xi$-invariant measure on $\R_\Gamma$. 
In what follows, 
$\Lambda$ is identified with the subgroup of $\Gamma\rtimes\Lambda$. 
Let $\omega:\Z^N\to\Lambda$ be the inverse of $\zeta|\Lambda$. 
For any $g=(x,t,\lambda)\in\Ker\xi$, one has 
\[
g_*\,\mathrm{d}\mu=\lambda\,\mathrm{d}\mu
\]
and 
\[
\lambda=(0,\lambda)=\pi(g)=\omega(\zeta(\pi(g)))
=\omega(-f(r(g))+f(s(g))), 
\]
which implies 
\[
g_*\,\mathrm{d}\mu=\omega(f(r(g)))^{-1}\omega(f(s(g)))\,\mathrm{d}\mu. 
\]
Hence the measure $\nu$ defined by 
\[
\mathrm{d}\nu(x)=\omega(f(x))^{-1}\,\mathrm{d}\mu(x)
\]
is $\Ker\xi$-invariant. 

In order to compute the ratio set $R(S,\nu)$, 
we pick $g=(x,t,\lambda)\in\S$. 
Then 
\[
g_*\,\mathrm{d}\nu
=\lambda\omega(f(r(g)))\omega(f(s(g)))^{-1}\,\mathrm{d}\nu, 
\]
and so we get $R(S,\nu)\subset\Lambda$. 
For any $\lambda\in\Lambda$, letting $g:=(0_+,0,\lambda)\in\S$, 
we have 
\[
g_*\,\mathrm{d}\nu
=\lambda\omega(f(0_+))\omega(f(0_+))^{-1}\,\mathrm{d}\nu
=\lambda\,\mathrm{d}\nu, 
\]
which means $\Lambda\subset R(S,\nu)$. 
The proof is completed. 
\end{proof}

In general, when $\G$ is an ample groupoid and 
$\mu$ is a $\G$-invariant measure on $\G^{(0)}$, 
one can define a homomorphism $\tilde\mu:H_0(\G)\to\R$ 
by letting 
\[
\tilde\mu([f]):=\int f\ \mathrm{d}\mu
\]
for every $f\in C_c(\G^{(0)},\Z)$. 

\begin{theorem}\label{classifygroupoid}
For $i=1,2$, 
let $\Lambda_i\subset(0,\infty)$ be 
a finitely generated multiplicative subgroup and 
let $\Gamma_i\subset\R$ be a countable $\Z\Lambda_i$-module, 
which is dense in $\R$. 
Suppose $\rank\Gamma_i\geq2$ for $i=1,2$. 
Let $\S_i:=\S(\Gamma_i,\Lambda_i)$ be the Stein groupoids. 
The following are equivalent. 
\begin{enumerate}
\item $\S_1$ and $\S_2$ are isomorphic. 
\item $\Lambda_1=\Lambda_2$ and 
there exists $s>0$ such that $\Gamma_1=s\Gamma_2$. 
\item $(\Gamma_1\rtimes\Lambda_1)\curvearrowright\R_{\Gamma_1}$ 
and $(\Gamma_2\rtimes\Lambda_2)\curvearrowright\R_{\Gamma_2}$ 
are conjugate. 
\end{enumerate}
\end{theorem}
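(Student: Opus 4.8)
The plan is to establish the cyclic chain of implications (1)$\implies$(2)$\implies$(3)$\implies$(1), with essentially all of the work concentrated in (1)$\implies$(2). The two remaining implications are elementary. For (2)$\implies$(3), given $\Lambda_1=\Lambda_2=:\Lambda$ and $\Gamma_1=s\Gamma_2$, the scaling map $x\mapsto sx$ on $\R$ induces a homeomorphism $\R_{\Gamma_2}\to\R_{\Gamma_1}$ (it carries the generating intervals $1_{[t,u)}$ with $t,u\in\Gamma_2$ to intervals with endpoints in $s\Gamma_2=\Gamma_1$), and this homeomorphism intertwines the two actions along the group isomorphism $(t,\lambda)\mapsto(st,\lambda)$ from $\Gamma_2\rtimes\Lambda$ to $\Gamma_1\rtimes\Lambda$; hence the actions are conjugate. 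For (3)$\implies$(1), conjugate actions have isomorphic transformation groupoids, so $\S_1\cong\S_2$.

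For the hard direction (1)$\implies$(2), suppose $\Phi\colon\S_1\to\S_2$ is a groupoid isomorphism and write $\H_i:=\R_{\Gamma_i}\rtimes\Gamma_i$. By Lemma \ref{isotropy}, $\Lambda_1\cong\Lambda_2$ as abelian groups, so both are free of a common rank $N$. The first task is to recover the slope groups. On $\S_2$, choose $\zeta_2\colon\Gamma_2\rtimes\Lambda_2\to\Z^N$ by composing the quotient $\Gamma_2\rtimes\Lambda_2\to\Lambda_2$ with an isomorphism $\Lambda_2\cong\Z^N$, and set $\xi_2:=\zeta_2\circ\pi_2\colon\S_2\to\Z^N$. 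Then $\zeta_2$ satisfies condition (2) of Lemma \ref{conditiononzeta}, so $\Ker\xi_2=\H_2$ carries an invariant measure and the essential range of $\xi_2$ is $\Z^N$. Transporting through $\Phi$, the homomorphism $\xi_2\circ\Phi\colon\S_1\to\Z^N$ still has essential range $\Z^N$ and kernel $\Phi^{-1}(\H_2)\cong\H_2$, which admits an invariant measure; thus $\xi_2\circ\Phi$ meets the hypotheses of Proposition \ref{recoverLambda} for $\S_1$. Since the ratio set is an isomorphism invariant (the pullback $\Phi^*\nu_2$ of a $\Ker\xi_2$-invariant measure $\nu_2$ is a $\Ker(\xi_2\circ\Phi)$-invariant measure, and $R(\S_1,\Phi^*\nu_2)=R(\S_2,\nu_2)$), applying Proposition \ref{recoverLambda} on both sides yields $\Lambda_1=R(\S_1,\Phi^*\nu_2)=R(\S_2,\nu_2)=\Lambda_2$.

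It remains to produce the scalar $s>0$ with $\Gamma_1=s\Gamma_2$. Proposition \ref{recoverLambda} also gives that $\Ker(\xi_2\circ\Phi)$ is Kakutani equivalent to $\H_1$; since this kernel is isomorphic to $\H_2$, transitivity of Kakutani equivalence forces $\H_1$ and $\H_2$ to be Kakutani equivalent. Each $\H_i$ carries a unique invariant measure $\mu_i$ up to scaling, and the associated homomorphism $\tilde\mu_i\colon H_0(\H_i)\to\R$ has image exactly $\Gamma_i$, because a basic compact open interval $(t_-,u_+)\subset\R_{\Gamma_i}$ has measure $u-t$ and such differences fill $\Gamma_i$. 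A Kakutani equivalence $\H_1|Y_1\cong\H_2|Y_2$ restricts the measures to the full clopen sets $Y_i$ and induces isomorphisms $H_0(\H_i)\cong H_0(\H_i|Y_i)$ compatible with the pairings $\tilde\mu_i$; by uniqueness of the invariant measure the identification $\H_1|Y_1\cong\H_2|Y_2$ scales $\mu_1$ to a fixed multiple of $\mu_2$, so the induced isomorphism $H_0(\H_1)\cong H_0(\H_2)$ multiplies $\tilde\mu$ by a single constant $s>0$. Comparing the images $\Gamma_1$ and $\Gamma_2$ then gives $\Gamma_1=s\Gamma_2$, completing (1)$\implies$(2).

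The main obstacle is precisely this passage (1)$\implies$(2): everything rests on extracting the algebraic data $\Lambda_i$ and $\Gamma_i$ from the bare groupoid $\S_i$ in an isomorphism-invariant fashion. This is where $H^1$-rigidity (Proposition \ref{Sisrigid}) is indispensable, as it forces the transported cocycle $\xi_2\circ\Phi$ to be cohomologous to one factoring through $\Gamma_1\rtimes\Lambda_1$, which is what allows Lemma \ref{conditiononzeta} to identify its kernel with $\H_1$ and to pin down the ratio set. The delicate bookkeeping is to verify that the ratio set $R(\S,\nu)$ and the measure--homology pairing $\tilde\mu$ really are invariant under isomorphism and under Kakutani equivalence respectively, and to track the single positive scalar $s$ cleanly through these identifications; once this is in place, the recovery of $\Lambda$ and of $\Gamma$ up to scaling is forced.
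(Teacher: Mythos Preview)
Your proof is correct and follows essentially the same route as the paper's. The only cosmetic difference is that you define the standard cocycle on $\S_2$ and transport it via $\Phi$ to $\S_1$, whereas the paper defines it on $\S_1$ and transports via $\Phi^{-1}$; in both cases Proposition~\ref{recoverLambda} identifies the ratio set with $\Lambda_i$ and shows the kernels are Kakutani equivalent to $\H_i$, and the invariant-measure pairing on $H_0$ then yields $\Gamma_1=s\Gamma_2$.
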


\begin{proof}
(2)$\implies$(3) is clear 
because the multiplication by $s$ gives a conjugacy 
between $(\Gamma_1\rtimes\Lambda_1)\curvearrowright\R_{\Gamma_1}$ 
and $(\Gamma_2\rtimes\Lambda_2)\curvearrowright\R_{\Gamma_2}$. 

(3)$\implies$(2) is obvious. 

(1)$\implies$(2)\:
Let $\Phi:\S_1\to\S_2$ be an isomorphism. 
By Lemma \ref{isotropy}, 
$\Lambda_1\cong\Lambda_2\cong\Z^N$ for some $N\in\N$. 
For each $i=1,2$, we let $\H_i:=\R_{\Gamma_i}\rtimes\Gamma_i$ and 
let $\mu_i$ be the $\H_i$-invariant measure 
satisfying $\Im\tilde\mu_i=\Gamma_i$. 
Choose an isomorphism $\zeta:\Lambda_1\to\Z^N$ and 
define the homomorphism $\xi:\S_1\to\Z^N$ 
by $\xi(x,t,\lambda):=\zeta(\lambda)$ for every $(x,t,\lambda)\in\S_1$. 
Clearly $\xi$ satisfies the assumption of the proposition above. 
Hence $\xi\circ\Phi^{-1}:\S_2\to\Z^N$ also satisfies 
the assumption of the proposition above. 
Therefore, we obtain 
\[
\Lambda_1=R(\S_1,\mu_1)=R(\S_2,\Phi_*(\mu_1))=\Lambda_2. 
\]
Moreover, $\H_1=\Ker\xi$ is isomorphic to $\Ker(\xi\circ\Phi^{-1})$, 
and $\Ker(\xi\circ\Phi^{-1})$ is Kakutani equivalent to $\H_2$ 
by Proposition \ref{recoverLambda}. 
It follows that $\H_1$ is Kakutani equivalent to $\H_2$. 
Then there exists $s>0$ such that $\Im\tilde\mu_1=s\Im\tilde\mu_2$, 
and so $\Gamma_1=s\Gamma_2$. 
\end{proof}

\begin{remark}\label{OErigidity}
As shown in \cite[Theorem 1.2]{Li18ETDS}, 
two transformation groupoids are isomorphic if and only if 
the two actions are continuously orbit equivalent. 
Thus, the theorem above means that 
if the two systems 
$(\Gamma_i\rtimes\Lambda_i)\curvearrowright\R_{\Gamma_i}$ are 
continuously orbit equivalent, 
then they are actually conjugate. 
This is a new example of 
continuous orbit equivalence rigidity phenomena. 
\end{remark}

Now we are ready to prove the main theorem. 

\begin{theorem}\label{classifygroup}
Let $\S_i:=\S(\Gamma_i,\Lambda_i)$ be the Stein groupoids 
as in Theorem \ref{classifygroupoid}. 
Let $\ell_i\in\Gamma_i\cap(0,\infty)$. 
The following conditions are equivalent. 
\begin{enumerate}
\item $V(\Gamma_1,\Lambda_1,\ell_1)$ is isomorphic to 
$V(\Gamma_2,\Lambda_2,\ell_2)$ as discrete groups. 
\item $D(V(\Gamma_1,\Lambda_1,\ell_1))$ is isomorphic to 
$D(V(\Gamma_2,\Lambda_2,\ell_2))$ as discrete groups. 
\item $\S(\Gamma_1,\Lambda_1)|[0_+,\ell_{1-}]$ is isomorphic to 
$\S(\Gamma_2,\Lambda_2)|[0_+,\ell_{2-}]$ as ample groupoids. 
\item $\Lambda_1=\Lambda_2$ and 
there exists $s>0$ such that $\Gamma_1=s\Gamma_2$ 
and $\ell_1-s\ell_2$ is zero in $H_0(\Lambda_1,\Gamma_1)$. 
\end{enumerate}
\end{theorem}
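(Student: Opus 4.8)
The plan is to lean on Lemma \ref{groupVSgroupoid}, which already supplies $(1)\iff(2)\iff(3)$, and thereby reduce the whole statement to the equivalence $(3)\iff(4)$. Throughout write $\S=\S(\Gamma,\Lambda)$ and, for $\ell\in\Gamma\cap(0,\infty)$, abbreviate $Y_\ell:=[0_+,\ell_-]\subset\R_\Gamma$. The first thing I would record is the behaviour of the class $[1_{Y_\ell}]\in H_0(\S)$. Sending $t\in\Gamma$ with $t>0$ to $[1_{[0_+,t_-]}]$ and extending by $\phi(-t)=-\phi(t)$ defines a homomorphism $\phi\colon\Gamma\to H_0(\S)$: additivity follows by splitting $[0_+,(s{+}t)_-]=[0_+,s_-]\sqcup[s_+,(s{+}t)_-]$ and using $\Gamma$-translation invariance, which identifies $[1_{[s_+,(s+t)_-]}]$ with $[1_{[0_+,t_-]}]$. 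Because the bisection $Y\times\{(0,\lambda)\}$ has range $Y$ and source $\lambda^{-1}Y$, the $\Lambda$-part of $\S$ identifies $[1_{[0_+,(\lambda t)_-]}]$ with $[1_{[0_+,t_-]}]$ in $H_0(\S)$; hence $\phi(\lambda t)=\phi(t)$ and $\phi$ descends to a homomorphism $\bar\phi\colon H_0(\Lambda,\Gamma)\to H_0(\S)$ with $\bar\phi([\ell])=[1_{Y_\ell}]$. This is the algebraic shadow of condition (4): the hypothesis $\ell_1-s\ell_2=0$ in $H_0(\Lambda,\Gamma)$ is exactly what makes $[1_{Y_{\ell_1}}]=[1_{Y_{s\ell_2}}]$ in $H_0(\S)$.

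For $(4)\implies(3)$, I would first remove the scaling. Since $\Gamma_1=s\Gamma_2$ and $\Lambda_1=\Lambda_2=:\Lambda$, the map $x\mapsto sx$ gives an isomorphism $\S_2\cong\S_1=:\S$ carrying $Y_{\ell_2}\subset\R_{\Gamma_2}$ onto $[0_+,(s\ell_2)_-]\subset\R_{\Gamma_1}$, so it suffices to work in a single $\S=\S(\Gamma,\Lambda)$ (with $\Gamma=\Gamma_1$) and produce an isomorphism $\S|Y_{\ell_1}\cong\S|Y_L$ for $L:=s\ell_2$, given $\ell_1-L=0$ in $H_0(\Lambda,\Gamma)$. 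By the previous paragraph this yields $[1_{Y_{\ell_1}}]=[1_{Y_L}]$ in $H_0(\S)$. The remaining point is to promote this equality of $H_0$-classes to a compact open bisection $U\subset\S$ with $s(U)=Y_{\ell_1}$ and $r(U)=Y_L$; conjugation by $U$ then gives $\S|Y_{\ell_1}\cong\S|Y_L$. Concretely such a $U$ is the graph of a piecewise linear bijection $[0,\ell_1)\to[0,L)$ with breakpoints in $\Gamma$ and slopes $\lambda_j\in\Lambda$, and the relation $\ell_1-L=\sum_j(1-\lambda_j)t_j$ is precisely the bookkeeping $\sum_jt_j=\ell_1$, $\sum_j\lambda_jt_j=L$ such a rearrangement must satisfy. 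When $\Lambda=\{1\}$ we have $H_0(\Lambda,\Gamma)=\Gamma$, so $\ell_1=L$ and there is nothing to prove; when $\Lambda\neq\{1\}$, the groupoid $\S$ is minimal with no invariant measure, and I would invoke comparison for $\S$ (equivalently, that $\S$ is purely infinite), under which equality in $H_0(\S)$ is equivalent to equidecomposability of clopen sets, so that $U$ exists.

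For $(3)\implies(4)$, suppose $\Phi\colon\S_1|Y_{\ell_1}\to\S_2|Y_{\ell_2}$ is an isomorphism. Each $Y_{\ell_i}$ is full in $\S_i$ by minimality, so $\S_i|Y_{\ell_i}$ is Kakutani equivalent to $\S_i$; in particular $H^1(\S_i|Y_{\ell_i})\cong H^1(\S_i)$ is rigid and the proof of Proposition \ref{recoverLambda} applies verbatim to the reductions. Taking $\xi_1\colon\S_1|Y_{\ell_1}\to\Z^N$ to be the restriction of a surjective $\zeta\circ\pi$, both $\xi_1$ and $\xi_1\circ\Phi^{-1}$ satisfy the hypotheses of that proposition, and reading off the ratio set gives $\Lambda_1=R(\S_1|Y_{\ell_1},\nu_1)=R(\S_2|Y_{\ell_2},\Phi_*\nu_1)=\Lambda_2$, where $\nu_i$ is the restriction to $Y_{\ell_i}$ of Lebesgue measure, the (unique up to scale) invariant measure of the measure-preserving subgroupoid $\H_i|Y_{\ell_i}=\Ker\xi_i$. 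Since $\Phi$ transports one measure-preserving part to the other, $\Phi_*\nu_1=c\,\nu_2$ for some $c>0$. Comparing total masses gives $\ell_1=\nu_1(Y_{\ell_1})=c\,\nu_2(Y_{\ell_2})=c\ell_2$, while comparing the images of the induced traces, $\tilde\nu_1=c\,\tilde\nu_2\circ\Phi_*$ on $H_0$, gives $\Gamma_1=\Im\tilde\nu_1=c\,\Im\tilde\nu_2=c\Gamma_2$. Taking $s:=c$ we obtain $\Lambda_1=\Lambda_2$, $\Gamma_1=s\Gamma_2$ and even $\ell_1=s\ell_2$, so $\ell_1-s\ell_2=0$ in $H_0(\Lambda,\Gamma)$, which is (4).

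The step I expect to be the main obstacle is the promotion of $[1_{Y_{\ell_1}}]=[1_{Y_L}]$ to an actual bisection in $(4)\implies(3)$: this needs the comparison/pure-infiniteness property of $\S$ for $\Lambda\neq\{1\}$ and the fact that every element of the boundary subgroup $\langle(1-\lambda)t\rangle$ is genuinely realized by a finite piecewise linear rearrangement of $[0,\ell_1)$ onto $[0,L)$. A second delicate point, needed in $(3)\implies(4)$, is checking that $\Phi$ carries the measure-preserving subgroupoid $\Ker\xi_1$ onto the measure-preserving part of $\S_2|Y_{\ell_2}$ — equivalently that it respects the Radon--Nikodym (module) cocycle of the canonical measure class — which is what legitimizes the proportionality $\Phi_*\nu_1=c\,\nu_2$ and hence the computation of $\Gamma_i$ and $\ell_i$.
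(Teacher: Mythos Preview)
Your $(4)\Rightarrow(3)$ is fine and matches the paper's argument; the substantive issue is in $(3)\Rightarrow(4)$.

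The step ``since $\Phi$ transports one measure-preserving part to the other, $\Phi_*\nu_1=c\,\nu_2$'' is not merely delicate---it is false in general, and this is precisely why condition~(4) only asserts $\ell_1-s\ell_2=0$ in $H_0(\Lambda_1,\Gamma_1)$ rather than $\ell_1=s\ell_2$. An arbitrary isomorphism $\Phi$ need not carry $\H_1|Y_{\ell_1}$ onto $\H_2|Y_{\ell_2}$; it carries it onto $\Ker(\xi_1\circ\Phi^{-1})$, which Proposition~\ref{recoverLambda} only shows is \emph{Kakutani equivalent} to $\H_2|Y_{\ell_2}$. Concretely, composing any $\Phi$ with the inner automorphism coming from a non-measure-preserving element of $[[\S_2|Y_{\ell_2}]]$ (any element with a nontrivial slope) produces another isomorphism for which $\Phi_*\nu_1$ is not a scalar multiple of $\nu_2$. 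What the proof of Proposition~\ref{recoverLambda} actually yields is a continuous $\Lambda_2$-valued function $\omega$ and a scalar $s>0$ with
\[
\mathrm{d}(\Phi_*\nu_1)(x)=s\,\omega(x)\,\mathrm{d}\nu_2(x).
\]
Integrating over $Y_{\ell_2}$ then gives $s^{-1}\ell_1=\int_{Y_{\ell_2}}\omega\,\mathrm{d}\nu_2$, hence $s^{-1}\ell_1-\ell_2=\sum_j(\lambda_j-1)\,\nu_2(A_j)$ for a finite clopen partition $\{A_j\}$ with $\omega|A_j\equiv\lambda_j\in\Lambda_2$, which lands in the image of $1-\Lambda$ and gives exactly the $H_0(\Lambda,\Gamma)$ statement. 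The same density computation is what gives $\Im\widetilde{\Phi_*\nu_1}=s\Gamma_2$, so $\Gamma_1=s\Gamma_2$ still follows.

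The paper organizes this slightly differently: it first stabilizes, writing $\S_i\cong(\S_i|Y_{\ell_i})\times\mathcal{R}$, to promote the reduction isomorphism to an isomorphism $\Phi:\S_1\to\S_2$ with $\Phi(Y_{\ell_1})=Y_{\ell_2}$, then applies Theorem~\ref{classifygroupoid} directly and reads off $\omega$ and $s$ from the proof of Proposition~\ref{recoverLambda}. Either route works, but in both the $\Lambda$-valued density $\omega$ is unavoidable and is exactly the origin of the coinvariants condition.
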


\begin{proof}
The equivalence between (1), (2) and (3) follows 
from Lemma \ref{groupVSgroupoid} 
(without assuming that $\Lambda_i$ is finitely generated 
and the rank of $\Gamma_i$ is greater than one). 

(3)$\implies$(4)\:
Assume that 
$\S_1|[0_+,\ell_{1-}]$ is isomorphic to $\S_2|[0_+,\ell_{2-}]$ 
as ample groupoids. 
For each $i=1,2$, 
$\S_i$ is canonically isomorphic to 
the product of $\S_i|[0_+,\ell_{i-}]$ and 
the full equivalence relation $\mathcal{R}$ on $\Z$, 
i.e.\ $\mathcal{R}=\Z\times\Z$. 
It follows that 
there exists an isomorphism $\Phi:\S_1\to\S_2$ 
such that $\Phi([0_+,\ell_{1-}])=[0_+,\ell_{2-}]$. 
In particular, 
Theorem \ref{classifygroupoid} implies $\Lambda_1=\Lambda_2$. 
Let $\H_i$, $\mu_i$ and $\xi:\S_1\to\Z^N$ be 
as in the proof of Theorem \ref{classifygroupoid}. 
Put $\nu:=\Phi_*(\mu_1)$, 
which is a $\Ker(\xi\circ\Phi^{-1})$-invariant measure. 
The following diagram is commutative: 
\[
\xymatrix@M=8pt{
\Gamma_1 \ar@{=}[d] & 
H_0(\H_1) \ar[l]^{\cong}_{\tilde\mu_1} \ar[r] \ar[d]_{H_0(\Phi)} & 
H_0(\S_1) \ar[d]_{H_0(\Phi)} \\
\Gamma_1 & 
H_0(\Ker(\xi\circ\Phi^{-1})) \ar[l]^-{\tilde\nu}_-{\cong} \ar[r] & 
H_0(\S_2)
}
\]
In the same way as Theorem \ref{classifygroupoid}, 
$\Ker(\xi\circ\Phi^{-1})$ is Kakutani equivalent to $\H_2$. 
Moreover, by the proof of Proposition 5.7 (2), 
we can find a continuous map $\omega:\R_{\Gamma_2}\to\Lambda_2$ and 
$s>0$ such that 
\[
\mathrm{d}\nu(x)=s\omega(x)\,\mathrm{d}\mu_2(x), 
\]
which implies 
\[
\Gamma_1=\Im\tilde\nu
=\left\{s\int f(x)\omega(x)\ \mathrm{d}\mu_2(x)
\mid f\in C_c(\R_{\Gamma_2},\Z)\right\}=s\Gamma_2. 
\]
Furthermore 
\[
s^{-1}\ell_1=s^{-1}\mu_1([0_+,\ell_{1-}])
=s^{-1}\nu([0_+,\ell_{2-}])
=\int 1_{[0_+,\ell_{2-}]}(x)\omega(x)\ \mathrm{d}\mu_2(x). 
\]
which means 
$s^{-1}\ell_1-\ell_2$ is trivial in $H_0(\Lambda_2,\Gamma_2)$, 
and hence $\ell_1-s\ell_2$ is trivial in $H_0(\Lambda_1,\Gamma_1)$. 

(4)$\implies$(3)\:
Multiplication by $s$ gives an isomorphism 
between $\S(\Gamma_2,\Lambda_2)|[0_+,\ell_{2-}]$ 
and $\S(s\Gamma_2,\Lambda_2)|[0_+,s\ell_{2-}]
=\S(\Gamma_1,\Lambda_1)|[0_+,s\ell_{2-}]$. 
As $1_{[0_+,\ell_{1-}]}$ and $1_{[0_+,s\ell_{2-}]}$ are equivalent 
in $H_0(\S(\Gamma_1,\Lambda_1))$, 
the conclusion follows. 
\end{proof}

\section{The case of rank one}

In our main results 
(Theorem \ref{classifygroupoid} and Theorem \ref{classifygroup}), 
the additive subgroup $\Gamma\subset\R$ has to be of rank at least two. 
Indeed, when the rank of $\Gamma$ is one, 
the $H^1$-rigidity does not hold. 
We would like to observe this closely through a concrete example. 

To start with, 
we briefly recall the notion of SFT groupoids from \cite{MM14Kyoto}. 
Let $A=[A(i,j)]_{i,j=0}^{N-1}$ be an $N\times N$ matrix 
with entries in $\{0,1\}$, where $N\in\N\setminus\{1\}$. 
We assume that $A$ is irreducible and not a permutation matrix. 
Define 
\[
X_A:=\left\{(x_n)_{n\in\N}\in\{0,\dots,N{-}1\}^\N\mid 
A(x_n,x_{n+1})=1\quad\forall n\in\N\right\}. 
\]
The shift transformation $\sigma_A$ on $X_A$ 
defined by $\sigma_A((x_n)_n)=(x_{n+1})_n$ is 
a continuous surjective map on $X_A$. 
The topological dynamical system $(X_A,\sigma_A)$ is called 
the one-sided shift of finite type for $A$. 

The SFT groupoid $\G_A$ for $(X_A,\sigma_A)$ is given by 
\[
\G_A:=\left\{(x,n,y)\in X_A\times\Z\times X_A\mid
\exists k,l\geq0,\ n=k{-}l,\ \sigma_A^k(x)=\sigma_A^l(y)\right\}. 
\]
The topology of $\G_A$ is generated by sets of the form 
\[
\left\{(x,k{-}l,y)\in\G_A
\mid x\in V,\ y\in W,\ \sigma_A^k(x)=\sigma_A^l(y)\right\}, 
\]
where $V,W\subset X_A$ are open and $k,l\geq0$. 
Two elements $(x,n,y)$ and $(x',n',y')$ in $\G_A$ are composable 
if and only if $y=x'$, and the multiplication and the inverse are 
\[
(x,n,y)\cdot(y,n',y')=(x,n{+}n',y'),\quad (x,n,y)^{-1}=(y,-n,x). 
\]
The range and source maps are given 
by $r(x,n,y)=(x,0,x)$ and $s(x,n,y)=(y,0,y)$. 
We identify $X_A$ with the unit space $\G_A^{(0)}$ via $x\mapsto(x,0,x)$. 
The groupoid $\G_A$ is minimal and essentially principal. 

One can define a homomorphism $\xi:\G_A\to \Z$ by $\xi(x,n,y):=n$. 
It is known that the subgroupoid $\Ker\xi$ is an AF groupoid. 
When the matrix $A$ is primitive, 
the AF groupoid $\Ker\xi$ is minimal and 
has a unique invariant probability measure $\mu$. 
Furthermore, 
the ratio set $R(\G_A,\mu)$ is equal to 
$\langle\lambda\rangle=\{\lambda^k\mid k\in\Z\}$, 
where $\lambda>0$ is the Perron-Frobenius eigenvalue of $A$. 

The following is also well-known. 
See \cite[Proposition 3.4]{MM14Kyoto} for instance. 

\begin{lemma}
The cohomology group $H^1(\G_A)$ is isomorphic to 
\[
C(X_A,\Z)/\{f-f\circ\sigma_A\mid f\in C(X_A,\Z)\}. 
\]
\end{lemma}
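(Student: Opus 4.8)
The plan is to compute $H^1(\G_A)$ directly from the cochain complex of Definition \ref{cohomology} and then to exhibit an explicit isomorphism onto the stated quotient. First I would record what $H^1(\G_A)$ concretely is. Since $\delta^1(\xi)(g_1,g_2)=\xi(g_1)-\xi(g_1g_2)+\xi(g_2)$, a $1$-cocycle is exactly a continuous groupoid homomorphism $\xi\colon\G_A\to\Z$; and since $\delta^0(h)(g)=h(s(g))-h(r(g))$, the coboundaries are the functions $g\mapsto h(s(g))-h(r(g))$ for $h\in C(X_A,\Z)$. Thus $H^1(\G_A)=\Ker\delta^1/\Im\delta^0$ is the group of continuous homomorphisms $\G_A\to\Z$ modulo these coboundaries.

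The natural candidate for the isomorphism is restriction to the ``graph of $\sigma_A$''. For $x\in X_A$ the triple $(x,1,\sigma_A(x))$ lies in $\G_A$ (take $k=1$, $l=0$), and the assignment $x\mapsto(x,1,\sigma_A(x))$ is continuous because it is continuous on each of the finitely many cylinders on which $\sigma_A$ is injective. Hence for a cocycle $c$ the formula $\phi(c)(x):=c(x,1,\sigma_A(x))$ defines $\phi(c)\in C(X_A,\Z)$. A one-line computation gives $\phi(\delta^0 h)=h\circ\sigma_A-h=-(h-h\circ\sigma_A)$, which lies in the subgroup $\{f-f\circ\sigma_A\}$, so $\phi$ descends to a homomorphism $\bar\phi\colon H^1(\G_A)\to C(X_A,\Z)/\{f-f\circ\sigma_A\}$.

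For surjectivity I would build, from each $f\in C(X_A,\Z)$, the cocycle given by the Birkhoff-sum formula
\[
c_f(x,k-l,y):=\sum_{j=0}^{k-1}f(\sigma_A^j(x))-\sum_{j=0}^{l-1}f(\sigma_A^j(y)),
\]
and verify that it is independent of the chosen representation $n=k-l$ (using that the two forward orbits merge at $\sigma_A^k(x)=\sigma_A^l(y)$), continuous on each basic bisection, and a genuine homomorphism. Since $c_f(x,1,\sigma_A(x))=f(x)$, one gets $\phi(c_f)=f$, proving $\bar\phi$ is onto. For injectivity, suppose $\phi(c)\in\{f-f\circ\sigma_A\}$; subtracting the coboundary $\delta^0 h$ with $h=-f$ I may assume $\phi(c)\equiv 0$, i.e.\ $c$ vanishes on every $(x,1,\sigma_A(x))$. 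The key structural fact is that $\G_A$ is generated by the bisections $\{(x,1,\sigma_A(x))\}$ together with the units: any $(x,k-l,y)$ with $\sigma_A^k(x)=\sigma_A^l(y)$ factors as $(x,k,\sigma_A^k(x))\cdot(\sigma_A^l(y),-l,y)$, where each factor is a product (or the inverse of a product) of graph elements. The cocycle identity then forces $c\equiv 0$, hence $[c]=0$, and $\bar\phi$ is injective.

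I expect the main obstacle to be the verification that $c_f$ is a well-defined cocycle: both the independence of the representation $n=k-l$ and the homomorphism property reduce to careful bookkeeping of the Birkhoff sums along the two orbits after they collide, and this is where essentially all the content lies. The generation statement used in the injectivity argument is the other point requiring care, though it becomes routine once each element of $\G_A$ is written as a positive power of the shift followed by a negative one.
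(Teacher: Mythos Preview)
Your argument is correct. The map $\phi$ is well defined, the Birkhoff-sum cocycle $c_f$ gives a section showing surjectivity, and the generation of $\G_A$ by the graph bisections $\{(x,1,\sigma_A(x))\}$ yields injectivity exactly as you outline; the bookkeeping you flag as the main obstacle does go through without surprises.

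There is nothing to compare against in the paper itself: the lemma is stated as well known and simply referred to \cite[Proposition~3.4]{MM14Kyoto}. Your write-up supplies the direct cochain computation that this citation stands in for, which is the standard argument for Deaconu--Renault groupoids of a single local homeomorphism. One small cosmetic point: the continuity of $x\mapsto(x,1,\sigma_A(x))$ follows immediately from the continuity of $\sigma_A$ and the description of the basic open sets in $\G_A$; the appeal to ``finitely many cylinders on which $\sigma_A$ is injective'' is not needed.
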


Now we consider Stein groupoids. 
For $\lambda>1$, we set 
$\S_\lambda:=\S(\Z[\lambda,\lambda^{-1}],\langle\lambda\rangle)$. 
To simplify notation, 
we write $\R_\lambda$ instead of $\R_{\Z[\lambda,\lambda^{-1}]}$. 
When $\lambda$ is irrational, 
Theorem \ref{Sisrigid}, Example \ref{algebraic} and 
Example \ref{transcendental} imply 
\[
H^1(\S_\lambda)\cong\begin{cases}\Z&\text{$\lambda$ is algebraic}\\
\Z\oplus\Z&\text{$\lambda$ is transcendental. }\end{cases}
\]
Let $n\in\N\setminus\{1\}$ 
and let $A_n$ be the $n\times n$ matrix whose entries are all one. 
The system $(X_{A_n},\sigma_{A_n})$ is 
the one-sided full shift over $n$ symbols. 
The $n$-adic expansion of real numbers induces 
a homeomorphism $\phi:\R_n\cap[0_+,1_-]\to X_{A_n}$ satisfying 
\[
q(x)=\sum_{i=1}^\infty\frac{\phi(x)_i}{n^i}
\]
for all $x\in\R_n\cap[0_+,1_-]$, 
where $q:\R_n\to\R$ is the canonical surjection. 
Observe that $\phi$ is order preserving, 
where the order on $\R_n$ is as described in Section 3 
and the order on $X_{A_n}$ is the lexicographical order 
(see \cite[Section 2]{KMW98ETDS}). 
Furthermore, $\phi$ gives rise to an isomorphism $\Phi$ 
from the reduction of the Stein groupoid $\S_n|[0_+,1_-]$ to 
the SFT groupoid $\G_{A_n}$. 
In particular, 
the cohomology group $H^1(\S_n)$ is isomorphic to $H^1(\G_{A_n})$. 

\begin{proposition}
For $n\in\N\setminus\{1\}$, 
we let $\S_n:=\S(\Z[1/n],\langle n\rangle)$. 
There exists an isomorphism $\Phi:\S_n|[0_+,1_-]\to\G_{A_n}$. 
In particular, $H^1(\S_n)$ is isomorphic to 
\[
C(X_{A_n},\Z)/\{f-f\circ\sigma_{A_n}\mid f\in C(X_{A_n},\Z)\}. 
\]
\end{proposition}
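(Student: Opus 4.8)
The plan is to upgrade the unit-space homeomorphism $\phi\colon\R_n\cap[0_+,1_-]\to X_{A_n}$ to an isomorphism of \'etale groupoids, matching the affine germs that constitute $\S_n$ with the tail-equivalence data that constitutes $\G_{A_n}$. The organizing observation is that $\phi$ conjugates the degree-one generators of the two groupoids. Writing $v\colon X_{A_n}\to[0,1]$ for the base-$n$ value map $v(z)=\sum_{i\ge1}z_i/n^i$, one has $v\circ\phi=q$ and $v(\sigma_{A_n}(z))=nv(z)\pmod1$, so $\sigma_{A_n}$ realizes the expanding map $t\mapsto nt$ on the interval, which is exactly the slope-$n$ element of $\S_n$. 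Accordingly I expect $\Phi$ to intertwine the slope cocycle $(x,t,n^m)\mapsto m$ with the cocycle $\xi(x,N,y)=N$ up to the sign reflecting that the shift is expanding.

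Concretely, for $g=(x,t,n^m)\in\S_n|[0_+,1_-]$ with $r(g)=x$ and $y:=s(g)=n^{-m}(x-t)$ in $[0_+,1_-]$, I would set $\Phi(g):=(\phi(x),-m,\phi(y))$. The heart of the argument is to check that $\Phi(g)$ lies in $\G_{A_n}$. The defining relation of $g$ reads $q(x)=n^mq(y)+t$ with $t\in\Z[1/n]$. Choosing $k\ge0$ so large that $n^kt\in\Z$, and setting $l:=k+m\ge0$, one computes $v(\sigma_{A_n}^k\phi(x))=n^kq(x)-P$ and $v(\sigma_{A_n}^l\phi(y))=n^lq(y)-Q$ for suitable integers $P,Q\ge0$; substituting $q(x)=n^mq(y)+t$ shows that the difference $v(\sigma_{A_n}^k\phi(x))-v(\sigma_{A_n}^l\phi(y))=n^kt-P+Q$ is an integer lying in $[-1,1]$. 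Because the affine germ $g$ is order preserving it preserves the $\pm$-labels of the split points of $\R_n$, so $\sigma_{A_n}^k\phi(x)$ and $\sigma_{A_n}^l\phi(y)$ are $\phi$-images of points carrying the same label; hence the integer difference must be $0$ and the two sequences coincide, giving $\sigma_{A_n}^k\phi(x)=\sigma_{A_n}^l\phi(y)$ with $k-l=-m$. This step also exhibits the roles of the two defining features of $\S_n$: the condition $t\in\Z[1/n]$ makes $n^kt$ eventually integral, and the slope being a power of $n$ produces the matching shift exponents.

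With $\Phi$ defined I would verify the remaining structure. That $\Phi$ respects range, source, multiplication and inversion follows from additivity of the cocycles together with the conjugacy $v\circ\phi=q$; injectivity is clear since $\phi$ is injective and the degree is recorded faithfully; and surjectivity follows by reconstructing, from any $(u,N,w)\in\G_{A_n}$ with $\sigma_{A_n}^{k}(u)=\sigma_{A_n}^{l}(w)$, the unique affine germ of slope $n^{-N}$ and $\Z[1/n]$-translation determined by the finite prefixes of $u$ and $w$. Finally, to see that $\Phi$ is a homeomorphism I would test it on the standard bases of compact open bisections: $\phi$ sends the basic intervals $(a_-,b_+)$ of $\R_n\cap[0_+,1_-]$ to cylinder sets of $X_{A_n}$, and both groupoid topologies are generated by these base sets together with the shift, respectively affine, data, so continuity and openness of $\Phi$ and $\Phi^{-1}$ reduce to the order-preserving homeomorphism property of $\phi$.

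The step I expect to be most delicate is precisely the passage from equality of $v$-values to equality of $n$-adic sequences at the $n$-adic rationals, where a real number has two expansions; ruling out the degenerate values $\pm1$ of the integer difference is exactly what the point-splitting of $\R_n$ together with the order-preservation of $\phi$ accomplishes, and it is the one place where more than a formal computation is needed. Once $\Phi$ is in place the conclusion is immediate: the dense $\Z[1/n]$-translation action makes $\S_n$ minimal, so the clopen set $[0_+,1_-]$ is full and reduction to it preserves cohomology, whence $H^1(\S_n)\cong H^1(\S_n|[0_+,1_-])\cong H^1(\G_{A_n})$; combining this with the preceding lemma computing $H^1(\G_A)$ gives the displayed quotient $C(X_{A_n},\Z)/\{f-f\circ\sigma_{A_n}\mid f\in C(X_{A_n},\Z)\}$.
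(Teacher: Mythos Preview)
Your proposal is correct and follows the same approach as the paper: the paper simply asserts that the $n$-adic expansion homeomorphism $\phi$ ``gives rise to an isomorphism $\Phi$'' from $\S_n|[0_+,1_-]$ to $\G_{A_n}$ and then states the proposition without further argument, so what you have written is exactly the verification the paper omits. Your identification $\Phi(x,t,n^m)=(\phi(x),-m,\phi(y))$, the use of the $\pm$-labels to resolve the ambiguity of $n$-adic expansions at dyadic rationals, and the final appeal to fullness of $[0_+,1_-]$ together with the preceding lemma on $H^1(\G_A)$ all match the paper's intended route.
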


Evidently $H^1(\S_n)$ is larger than 
$H^1(\langle n\rangle)=H^1(\Z)\cong\Z$, 
and so $\Z[1/n]\rtimes\langle n\rangle\curvearrowright\R_n$ 
is not $H^1$-rigid. 

\begin{problem}
Let $\lambda>1$. 
When $\lambda$ is in $\Q\setminus\N$, compute $H^1(\S_\lambda)$. 
\end{problem}

When $\lambda=p/q$ with $p,q\in\N$ coprime, 
it is known that the homology groups of $\S_\lambda$ are 
\[
H_k(\S_\lambda)=\begin{cases}\Z/(p-q)\Z & k=0\\ 0 & k>0. \end{cases}
\]
See \cite[Proposition 2.17]{CPPR11JFA} and \cite{Li15JFA}. 

Next we take a closer look at $\S_2$. 
As observed above, 
there exists an isomorphism $\Phi_1:\S_2|[0_+,1_-]\to\G_{A_2}$. 
Set 
\[
B:=\begin{bmatrix}1&1\\1&0\end{bmatrix}. 
\]
Notice that $B$ is primitive and 
its Perron-Frobenius eigenvalue is $\beta:=(\sqrt{5}+1)/2$. 
The shift space $X_B\subset\{0,1\}^\N$ consists of 
infinite sequences of $\{0,1\}$ in which `11' do not appear. 
We can define a homeomorphism from $X_{A_2}=\{0,1\}^\N$ to $X_B$ 
by substituting the letter `1' to the word `10', 
and it gives rise to an isomorphism $\Phi_2:\G_{A_2}\to\G_B$ 
(see \cite[Section 5]{Ma10PJM}). 
Consider the homomorphism $\xi:\G_B\to\Z$ defined by $\xi(x,n,y):=n$. 
Then $\Ker\xi$ is a minimal AF groupoid 
with a unique invariant probability measure $\mu$ 
and $R(\G_B,\mu)=\langle\beta\rangle$. 
Since $\Phi_2\circ\Phi_1$ is an isomorphism 
from $\S_2|[0_+,1_-]$ to $\G_B$, 
the cocycle $\xi\circ\Phi_2\circ\Phi_1$ has the same property. 
Hence there exists a cocycle $\xi':\S_2\to\Z$ such that 
$\Ker\xi'$ is a minimal AF groupoid 
with a unique invariant measure $\mu'$ up to scalar multiplication, 
and $R(\S_2,\mu')=\langle\beta\rangle$. 
This contrasts to Proposition \ref{recoverLambda}. 
It is then natural to ask the following. 

\begin{problem}
Let $\xi:\S_2\to\Z$ be a nonzero homomorphism. 
\begin{enumerate}
\item Determine when the subgroupoid $\Ker\xi$ is AF. 
\item When $\Ker\xi$ admits a unique invariant measure 
up to scalar multiplication, we can consider the ratio set. 
Determine the groups which arise as such ratio sets. 
\end{enumerate}
\end{problem}

In the rest of this section, 
we would like to observe that $\S_2$ is embeddable into $\S_\beta$. 
The $\beta$-adic expansion of real numbers induces 
a homeomorphism $\psi:X_B\to\R_\beta\cap[0_+,1_-]$ satisfying 
\[
q(\psi(x))=\sum_{i=1}^\infty\frac{x_i}{\beta^i}
\]
for all $x\in X_B$, 
where $q:\R_\beta\to\R$ is the canonical surjection 
(see \cite[Example 2]{KMW98ETDS} and \cite[Example 2.3]{MM14Aust}). 
In the same way as mentioned before, $\psi$ is an order preserving map. 
Let us examine how the map $\psi$ transforms cylinder sets of $X_B$. 
When a word $w=w_1w_2\dots w_k\in\{0,1\}^k$ does not contain `11', 
we put the cylinder set 
\[
C(w):=\{x\in X_B\mid x_i=w_i\quad\forall i=1,2,\dots,k\}, 
\]
and let 
\[
b(w):=\sum_{i=1}^k\frac{w_i}{\beta^i}\in[0,1). 
\]
Then the image of $C(w)$ under $\psi$ is described as follows: 
if $w_k=0$, then 
\[
\psi(C(w))=\left[(b(w))_+,(b(w)+\beta^{-k})_-\right]\subset\R_\beta, 
\]
and if $w_k=1$, then 
\[
\psi(C(w))=\left[(b(w))_+,(b(w)+\beta^{-k-1})_-\right]\subset\R_\beta. 
\]
As a consequence, $\psi$ gives rise to 
a homomorphism $\Phi_3:\G_B\to\S_\beta|[0_+,1_-]$. 
\[
\xymatrix@M=8pt{
\S_2|[0_+,1_-] \ar[r]^-{\Phi_1}_-{\cong} & 
\G_{A_2} \ar[r]^{\Phi_2}_{\cong} & 
\G_B \ar@{^{(}-_>}[r]^-{\Phi_3} & 
\S_\beta|[0_+,1_-]
}
\]
The reader should be warned that $\Phi_3$ is not surjective. 
Indeed, as 
\[
\psi(010101\dots)=(\beta^{-1})_-
\quad\text{and}\quad 
\psi(101010\dots)=1_-, 
\]
$(1_-,\beta^{-2},1)\in\S_\beta$ does not belong to the image of $\Phi_3$. 
Also, one can see that 
the homeomorphism $\R_2\cap[0_+,1_-]\to\R_\beta\cap[0_+,1_-]$ 
induced by $\Phi:=\Phi_3\circ\Phi_2\circ\Phi_1$ preserves the order, 
i.e.\ $x<y$ if and only if $\Phi(x)<\Phi(y)$ 
for $x,y\in\R_2\cap[0_+,1_-]$. 

To sum up, we get the following. 

\begin{proposition}\label{S2<Sbeta}
Let $\beta:=(\sqrt{5}+1)/2$. 
There exists a continuous homomorphism 
$\Phi:\S_2\cap[0_+,1_-]\to\S_\beta\cap[0_+,1_-]$ 
satisfying the following. 
\begin{enumerate}
\item $\Phi$ is injective and open. 
\item The restriction of $\Phi$ to the unit spaces is 
a homeomorphism preserving the order, that is, 
$x<y$ if and only if $\Phi(x)<\Phi(y)$ 
for $x,y\in\R_2\cap[0_+,1_-]$. 
\end{enumerate}
Moreover, 
$\Phi$ naturally extends to an embedding $\S_2\hookrightarrow\S_\beta$, 
which also has the properties above. 
\end{proposition}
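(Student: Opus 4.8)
The plan is to take $\Phi$ to be exactly the composition $\Phi_3\circ\Phi_2\circ\Phi_1$ assembled in the discussion preceding the statement, to verify the two listed properties factor by factor, and finally to extend the map over all of $\R_2$ by integer translation.

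First I would record that $\Phi_1$ and $\Phi_2$ are groupoid isomorphisms, hence continuous, bijective and open, while $\Phi_3$ is the homomorphism $\G_B\to\S_\beta|[0_+,1_-]$ induced by the $\beta$-expansion $\psi$. The heart of property (1) is then that $\Phi_3$ is injective and open. Injectivity follows from the fact that $\Phi_3$ restricts to the homeomorphism $\psi$ on unit spaces, together with the observation that, since $\beta$ is not a root of unity, distinct labels $n\in\Z$ on elements $(x,n,y)\in\G_B$ yield distinct slopes $\beta^n$ in $\S_\beta$. Openness follows from the explicit computation recalled just before the statement: $\psi$ carries each cylinder $C(w)$ onto a clopen interval $[a_+,b_-]\subset\R_\beta$, so the basic open bisections of $\G_B$ are sent to open bisections of $\S_\beta|[0_+,1_-]$. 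Composing the three maps, $\Phi$ is continuous, injective and open, which gives (1).

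For property (2) I would check that each factor preserves the order on unit spaces: $\Phi_1^{(0)}=\phi$ is order preserving by the cited description of the $n$-adic expansion, $\Phi_3^{(0)}=\psi$ is order preserving by the cited description of the $\beta$-expansion, and the substitution map $\Phi_2^{(0)}$ induced by $1\mapsto10$ preserves the lexicographic order because the substituted images of a common prefix coincide and the first discrepancy, $0$ against $1$, survives the substitution. Since each factor is moreover a homeomorphism of unit spaces (here using that $\Phi_1,\Phi_2$ are isomorphisms and that $\psi$ is a homeomorphism onto $\R_\beta\cap[0_+,1_-]$), the composite $\Phi^{(0)}\colon\R_2\cap[0_+,1_-]\to\R_\beta\cap[0_+,1_-]$ is an order preserving homeomorphism, giving (2).

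Finally, for the extension I would invoke the decomposition already used in the proof of Theorem \ref{classifygroup}, namely $\S_2\cong(\S_2|[0_+,1_-])\times\mathcal{R}$ and $\S_\beta\cong(\S_\beta|[0_+,1_-])\times\mathcal{R}$, where $\mathcal{R}=\Z\times\Z$ is the full equivalence relation on $\Z$ indexing the integer intervals $[n_+,(n+1)_-]$ through translation (note $\Z\subset\Z[1/2]$ and $\Z\subset\Z[\beta,\beta^{-1}]$, so integer translation lies in both groupoids). Under these identifications I would extend the map to $\Phi\times\id_\mathcal{R}\colon\S_2\to\S_\beta$, still denoted $\Phi$; injectivity and openness survive forming a product with the identity groupoid, and on unit spaces $\Phi^{(0)}$ becomes $\Phi^{(0)}\times\id$, a homeomorphism $\R_2\to\R_\beta$ that preserves the global order because it fixes each interval index $n$ and is order preserving within each interval. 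The main obstacle I anticipate is bookkeeping rather than any single estimate: one must make $\Phi_3$ precise as a groupoid homomorphism by matching the slope $\beta^{k-l}$ attached to $(x,k-l,y)\in\G_B$ with the affine germ that $\psi$ assigns, handling the two shapes of cylinder image ($w_k=0$ versus $w_k=1$) that make $\psi$ length-irregular, and one must set up the product decomposition compatibly for both $\S_2$ and $\S_\beta$ so that $\Phi\times\id_\mathcal{R}$ genuinely lands in $\S_\beta$. Once these identifications are fixed, the verifications of (1), (2) and of the extension are routine.
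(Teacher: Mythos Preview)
Your proposal is correct and follows the paper's own route: the paper constructs $\Phi=\Phi_3\circ\Phi_2\circ\Phi_1$ in the discussion immediately preceding the proposition (which it then summarizes with ``To sum up, we get the following''), and your factor-by-factor verification of (1) and (2) together with the $(\S_\lambda|[0_+,1_-])\times\mathcal{R}$ extension simply fills in the details the paper leaves implicit. One small point: the slope attached to $(x,n,y)\in\G_B$ under $\Phi_3$ is $\beta^{-n}$ rather than $\beta^n$ (since $\psi\circ\sigma$ corresponds to multiplication by $\beta$), but this does not affect your injectivity argument.
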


It follows from the proposition above that 
the Higman-Thompson group $V_{2,1}$ (resp.\ $F_{2,1}$) embeds 
into Cleary's group $V_{\beta,1}$ (resp.\ $F_{\beta,1}$). 
Here, $V_{\lambda,1}$ stands for 
$V(\Z[\lambda,\lambda^{-1}],\langle\lambda\rangle,1)$, 
and $F_{\lambda,1}$ is the subgroup of $V_{\lambda,1}$ 
consisting of order preserving maps. 
The group $F_{2,1}$ is known as Thompson's group, 
and the group $F_{\beta,1}$ is known as its irrational slope version. 

\begin{remark}
The embedding $V_{2,1}\hookrightarrow V_{\beta,1}$ is mentioned 
in \cite[Section 6.7.2]{Ma15crelle}. 
The embedding $F_{2,1}\hookrightarrow F_{\beta,1}$ is discussed 
in \cite{BNR21PublMat}. 
It is also known that $F_{\beta,1}$ does not embed into $F_{2,1}$ 
(\cite{HM23GGD}). 
\end{remark}

\begin{problem}
For $\lambda,\mu>1$, 
detemine when $\S_\lambda$ embeds into $\S_\mu$ 
in a way similar to Proposition \ref{S2<Sbeta}. 
Notice that, in view of Proposition \ref{recoverLambda} and its proof, 
if $\lambda$ is irrational, 
then $\mu$ must be equal to a power of $\lambda$. 
For example, when $\beta=(\sqrt{5}+1)/2$, 
\[
\S_{\beta^2}=\S(\Z[\beta^2,\beta^{-2}],\langle\beta^2\rangle)
=\S(\Z[\beta,\beta^{-1}],\langle\beta^2\rangle)
\]
clearly embeds into 
$\S_\beta=\S(\Z[\beta,\beta^{-1}],\langle\beta\rangle)$. 
\end{problem}

\section{Attracting elements}

We recall the notion of attracting elements from \cite{Ma16Adv} 
(see also \cite[Section 3]{MM14Kyoto}). 

\begin{definition}[{\cite[Definition 5.11]{Ma16Adv}}]
Let $\G$ be an ample groupoid and let $g\in \G'$, i.e. $r(g)=s(g)$. 
We say that $g$ is an attracting element 
if for any compact open neighborhood $V$ of $x$, 
there exists a compact open bisection $U\subset\G$ 
such that $g\in U\subset V$ and $r(U)$ is a proper subset of $s(U)$. 
For such $U$, we call the closed set 
\[
Y:=\bigcap_{n=1}^\infty r(U^n)
\]
a limit set of $g$. 
Notice that $r(g)$ is contained in every limit set of $g$. 
\end{definition}

It is easy to see the following. 

\begin{lemma}
Let $\S=\S(\Gamma,\Lambda)$ be a Stein groupoid and 
let $\pi:\S\to\Lambda$ be the natural homomorphism. 
Suppose $g\in\S'$. 
Then $g$ is attracting if and only if $\pi(g)<1$. 
\end{lemma}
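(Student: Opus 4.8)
The plan is to work directly with the transformation groupoid structure $\S=\R_\Gamma\rtimes(\Gamma\rtimes\Lambda)$ and to control bisections near $g$ by means of the canonical invariant measure. First I would write $g=(x,t,\lambda)$ with $x\in\R_\Gamma$, $t\in\Gamma$, $\lambda\in\Lambda$, so that $\pi(g)=\lambda$. Denoting by $\phi_{(t,\lambda)}$ the homeomorphism of $\R_\Gamma$ lifting $y\mapsto\lambda y+t$, the hypothesis $g\in\S'$ says $\phi_{(t,\lambda)}^{-1}(x)=s(g)=r(g)=x$, so $x$ is a fixed point of $\phi_{(t,\lambda)}^{-1}$ (with $q(x)=t/(1-\lambda)$ when $\lambda\neq1$). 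Since $\Gamma\rtimes\Lambda$ is discrete, I may restrict attention to neighborhoods $V$ of $g$ contained in the single sheet $\R_\Gamma\times\{(t,\lambda)\}$; then any compact open bisection $U$ with $g\in U\subset V$ has the form $U=O\times\{(t,\lambda)\}$ for a compact open neighborhood $O$ of $x$, and $r(U)=O$ while $s(U)=\phi_{(t,\lambda)}^{-1}(O)$. Thus the defining condition $r(U)\subsetneq s(U)$ becomes $O\subsetneq\phi_{(t,\lambda)}^{-1}(O)$, and the whole lemma reduces to the local dynamics of $\phi_{(t,\lambda)}^{-1}$ at $x$.

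For the implication that $g$ attracting forces $\pi(g)<1$, I would invoke the invariant measure $\mu$ on $\R_\Gamma$ arising from Lebesgue measure, which is $\R_\Gamma\rtimes\Gamma$-invariant and has full support. Since translations preserve $\mu$ and multiplication by $\lambda$ scales it by $\lambda$, one has $\mu(s(U))=\mu(\phi_{(t,\lambda)}^{-1}(O))=\lambda^{-1}\mu(O)=\lambda^{-1}\mu(r(U))$. If $g$ is attracting, some such $U$ satisfies $r(U)\subsetneq s(U)$; as $s(U)\setminus r(U)$ is a nonempty clopen set, full support yields the strict inequality $\mu(r(U))<\mu(s(U))$, and dividing by $\mu(r(U))>0$ gives $\lambda^{-1}>1$, i.e.\ $\lambda<1$. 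Note this simultaneously rules out $\lambda=1$ without a separate argument.

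For the converse, suppose $\lambda<1$, so that $\lambda^{-1}>1$ and $\phi_{(t,\lambda)}^{-1}$ expands away from its fixed point $x$. Given any prescribed neighborhood of $g$, I would choose a basic compact open interval $O=(a_-,b_+)$ with $a,b\in\Gamma$, $a<b$, and $a\leq q(x)\leq b$, contained in that neighborhood. Because $\phi_{(t,\lambda)}^{-1}$ is order preserving and carries the real endpoints $a,b$ to $q(x)+\lambda^{-1}(a-q(x))$ and $q(x)+\lambda^{-1}(b-q(x))$, which lie in $\Gamma$ and bracket $[a,b]$ strictly more widely, one obtains $O\subsetneq\phi_{(t,\lambda)}^{-1}(O)$. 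Setting $U:=O\times\{(t,\lambda)\}$ then exhibits, inside the given neighborhood, a compact open bisection with $g\in U$ and $r(U)\subsetneq s(U)$, so $g$ is attracting.

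The only genuinely delicate point is the endpoint bookkeeping in the ordered space $\R_\Gamma$ in the converse: when $x=q(x)_+$ or $x=q(x)_-$ the fixed point sits at an endpoint of $O$, so expansion occurs on only one side, and I must check that the inclusion $O\subsetneq\phi_{(t,\lambda)}^{-1}(O)$ remains proper. This is a routine order computation using $\lambda a+t,\lambda b+t\in\Gamma$ and the identification $(a_-,b_+)=[a_+,b_-]$ of the basic clopen sets; everything else follows cleanly from the measure-scaling identity.
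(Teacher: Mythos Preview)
Your argument is correct. The paper itself omits the proof entirely, stating only ``It is easy to see the following,'' so there is no approach to compare against; your reduction to the local dynamics of $\phi_{(t,\lambda)}^{-1}$ together with the Lebesgue-measure scaling $\mu(s(U))=\lambda^{-1}\mu(r(U))$ is exactly the kind of direct computation the author has in mind, and your endpoint analysis for the basic clopen intervals $[a_+,b_-]$ handles the boundary cases cleanly.
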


By using the notion of attracting elements, 
we can strengthen Lemma \ref{isotropy} as follows. 

\begin{lemma}\label{orderpreserve}
For $i=1,2$, 
let $\S_i:=\S(\Gamma_i,\Lambda_i)$ be a Stein groupoid. 
Suppose that 
there exists a continuous homomorphism $\Phi:\S_1\to\S_2$ 
which is injective and open. 
Then, there exists a homomorphism $\phi:\Lambda_1\to\Lambda_2$ 
preserving the order, i.e.\ 
$\phi(\lambda)<\phi(\lambda')$ if and only if $\lambda<\lambda'$. 
\end{lemma}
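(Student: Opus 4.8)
The plan is to define $\phi$ through the action of $\Phi$ on a single isotropy group, and then to read off the order from the characterization of attracting elements. First I would fix the base point $x_1:=0_+\in\R_{\Gamma_1}$. By Lemma \ref{isotropy0}(2), the natural homomorphism $\pi_1:\S_1\to\Lambda_1$ restricts to an isomorphism $(\S_1)_{x_1}\xrightarrow{\cong}\Lambda_1$, under which $\lambda\in\Lambda_1$ corresponds to $g_\lambda:=(x_1,0,\lambda)$. Since any groupoid homomorphism sends units to units and intertwines the range and source maps, $\Phi$ carries $(\S_1)_{x_1}$ into the isotropy group $(\S_2)_{\Phi(x_1)}$, on which $\pi_2$ is injective by Lemma \ref{isotropy0}(1). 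I would then set $\phi:=\pi_2\circ\Phi\circ(\pi_1|_{(\S_1)_{x_1}})^{-1}:\Lambda_1\to\Lambda_2$; this is a group homomorphism satisfying $\pi_1(g_\lambda)=\lambda$ and $\pi_2(\Phi(g_\lambda))=\phi(\lambda)$.

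Next, because $\Lambda_1$ and $\Lambda_2$ inherit the order of $(0,\infty)$ and $\phi$ is a homomorphism, the desired equivalence $\phi(\lambda)<\phi(\lambda')\Leftrightarrow\lambda<\lambda'$ reduces, after multiplying through by $(\lambda')^{-1}$, to the single statement $\lambda<1\Leftrightarrow\phi(\lambda)<1$. By the preceding lemma, an element $g\in\S_i'$ is attracting precisely when $\pi_i(g)<1$; hence this statement is exactly the assertion that $g_\lambda$ is attracting in $\S_1$ if and only if $\Phi(g_\lambda)$ is attracting in $\S_2$. Using $g_\lambda^{-1}=g_{\lambda^{-1}}$ together with the fact that $\Phi$ respects inverses, it suffices to prove the single implication: if $g\in\S_1'$ is attracting, then $\Phi(g)$ is attracting.

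The hard part is this last transport step, and it is exactly where both the injectivity and the openness of $\Phi$ are needed. Given a compact open neighborhood $V'$ of $\Phi(g)$ in $\S_2$, continuity of $\Phi$ lets me pick a compact open neighborhood $V$ of $g$ in $\S_1$ with $V\subset\Phi^{-1}(V')$. Attractiveness of $g$ then yields a compact open bisection $U\subset\S_1$ with $g\in U\subset V$ and $r(U)\subsetneq s(U)$. I claim $\Phi(U)$ witnesses that $\Phi(g)$ is attracting: it is compact as a continuous image of a compact set and open because $\Phi$ is open; it is a bisection since, if $r(\Phi(h))=r(\Phi(h'))$ with $h,h'\in U$, then $\Phi(r(h))=\Phi(r(h'))$ forces $r(h)=r(h')$ by injectivity of $\Phi$ on units, whence $h=h'$ as $U$ is a bisection, and symmetrically for $s$; and $r(\Phi(U))=\Phi(r(U))\subsetneq\Phi(s(U))=s(\Phi(U))$, again by injectivity of $\Phi$. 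Since $\Phi(g)\in\Phi(U)\subset\Phi(V)\subset V'$, this shows $\Phi(g)$ is attracting. The only delicate point is precisely this one: without openness $\Phi(U)$ need not be open in $\S_2$, and without injectivity the proper inclusion $r(U)\subsetneq s(U)$ could collapse. Tracing back the chain of equivalences then gives $\lambda<1\Leftrightarrow\phi(\lambda)<1$, and hence that $\phi$ preserves the order.
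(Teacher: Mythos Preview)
Your proof is correct and follows essentially the same route as the paper: pick a point with full isotropy via Lemma~\ref{isotropy0}(2), define $\phi$ as $\pi_2\circ\Phi\circ(\pi_1|_{(\S_1)_x})^{-1}$, and use the preceding lemma to translate order into the attracting property. The paper simply asserts that $g$ is attracting if and only if $\Phi(g)$ is, whereas you spell out the transport argument explicitly (using continuity to pull back $V'$, openness to make $\Phi(U)$ open, and injectivity to preserve the bisection property and the strict inclusion $r(U)\subsetneq s(U)$); this is a welcome addition of detail but not a different strategy.
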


\begin{proof}
For $i=1,2$, we let $\pi_i:\S_i\to\Lambda_i$ denote 
the natural homomorphism. 
By Lemma \ref{isotropy0} (2), 
there exists $x\in\S_1^{(0)}$ such that 
$\pi_1:(\S_1)_x\to\Lambda_1$ is an isomorphism. 
Clearly, $\Phi$ induces an injective homomorphism 
from $(\S_1)_x$ to $(\S_2)_{\Phi(x)}$. 
Besides, $g\in(\S_1)_x$ is attracting 
if and only if $\Phi(g)\in(\S_2)_{\Phi(x)}$ is attracting. 
By Lemma \ref{isotropy0} (1), 
$\pi_2:(\S_2)_{\Phi(x)}\to\Lambda_2$ is injective. 
Therefore 
\[
\phi:=\pi_2\circ\Phi\circ(\pi_1|(\S_1)_x)^{-1}
:\Lambda_1\to(\S_1)_x\to(\S_2)_{\Phi(x)}\to\Lambda_2
\]
is an injective homomorphism. 
It follow from the lemma above that 
$\phi(\lambda)<1$ if and only if $\lambda<1$. 
Thus $\phi$ is order preserving. 
\end{proof}

\begin{example}
The lemma above gives a restriction 
for two Stein groupoids being isomorphic. 
For example, let $\Lambda_1:=\langle 2,3\rangle\subset(0,\infty)$ 
(the multiplicative subgroup generated by $2$ and $3$) 
and let $\Lambda_2:=\langle 2,5\rangle\subset(0,\infty)$. 
Unless $\rank\Gamma_i\geq2$, 
we cannot use Theorem \ref{classifygroupoid}. 
But, there does not exist an order preserving embedding 
$\Lambda_1\to\Lambda_2$, and so, by using the lemma above, 
one can deduce that $\S_1$ is not isomorphic to $\S_2$ 
even in the case $\rank\Gamma_i=1$. 
Notice that the homology groups $H_k(\S_i)$ cannot distinguish 
the groupoids $\S_1$ and $\S_2$ in this case, 
because they are trivial for all $k$. 

When $\Lambda_1=\langle 2,3\rangle$ and $\Lambda_2:=\langle 2,9\rangle$, 
one cannot use this argument, 
and it is not known if the two Stein groupoids are isomorphic or not. 
\end{example}

\begin{problem}
Suppose that $\Lambda_i$ are subgroups of $\Q\cap(0,\infty)$ and 
$\Gamma_i$ are subgroups of $\Q$. 
Determine when the Stein groupoids $\S(\Gamma_1,\Lambda_1)$ and 
$\S(\Gamma_2,\Lambda_2)$ are isomorphic to each other. 
\end{problem}

\newcommand{\noopsort}[1]{}

\end{document}